\newtheorem{theorem}{Theorem}[section]
\newtheorem{lemma}[theorem]{Lemma}
\newtheorem{corollary}[theorem]{Corollary}
\newtheorem{proposition}[theorem]{Proposition}
\newtheorem{definition}[theorem]{Definition}
\title{{Matching preclusion for $n$-grid graphs}\thanks{This work was supported by NSFC (Grant No. 11371180).}}
\author{Qi Ding, Heping Zhang
\\ \footnotesize{School of Mathematics and Statistics, Lanzhou University, Lanzhou, Gansu 730000, P. R. China}
\\ \footnotesize{E-mail addresses: dingqi0@yonyou.com, zhanghp@lzu.edu.cn}
\\ Hui Zhou
\\ \footnotesize{School of Mathematical Sciences, Peking University, Beijing 100871, P. R. China}
\\ \footnotesize{E-mail address: zhouhpku13@pku.edu.cn}
}
\begin{document}

\maketitle

\begin{abstract}
A matching preclusion set of a graph is an edge set whose deletion results in a graph without perfect matching or almost perfect matching. The Cartesian product of $n$ paths is called an $n$-grid graph. In this paper, we study the matching preclusion problems for  $n$-grid graphs and obtain the following results. If an $n$-grid graph has an even order, then it has the matching preclusion number  $n$, and every optimal matching preclusion set is trivial. If the $n$-grid graph has an odd order, then it has the matching preclusion number $n+1$, and all the optimal matching preclusion sets are characterized.
\end{abstract}

\textbf{Key words}: Matching preclusion number; Optimal matching preclusion set; $n$-grid graph.



\section{Introduction}\label{section Introduction}

Let $G$ be a simple graph (without multiple edges or loops). We use $V(G)$ and $E(G)$ to denote its vertex set and edge set, respectively. The cardinality of $V(G)$ is called the \emph{order} of graph $G$, and denoted by $n(G)$. An edge set $M\subseteq E(G)$ is called \emph{matching} if no two edges in $M$ have a common end vertex. A matching of $G$ with the maximum cardinality is called a \emph{maximum matching}. For a matching $M$ of $G$, if the edges in $M$ are incident with all vertices of $G$, i.e. $|V(G)|=2|M|$, then $M$ is called a \emph{perfect matching} of $G$. If the edges in $M$ are incident with all but one of the vertices of $G$, i.e. $|V(G)|-1=2|M|$, then $M$ is called an \emph{almost perfect matching} of $G$. A \emph{matching preclusion set} of a graph $G$ is an edge set $F\subseteq E(G)$ such that $G-F$ has neither perfect matching nor almost perfect matching. A matching preclusion set with minimum cardinality is called an \emph{optimal matching preclusion set}. The cardinality of an optimal matching preclusion set of $G$ is called the \emph{matching preclusion number} of $G$ and is denoted by $mp(G)$. Brigham et al.~\cite{3} introduced the matching preclusion number as a measure
of robustness in the event of edge failure in interconnection networks.

The \emph{degree} $d_G(v)$ of a vertex $v$ in $G$ is the number of vertices adjacent to $v$. Let $\delta(G)$ and $\Delta(G)$ denote the \emph{minimum degree} and \emph{maximum degree} of $G$ respectively. Cheng et al. gave the following result.
\begin{proposition}[\cite{8}]\label{pro even order mp number upper bound}
Let $G$ be a graph with even order. Then $mp(G)\leqslant \delta(G)$.
\end{proposition}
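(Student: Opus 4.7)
The plan is to exhibit an explicit matching preclusion set whose size equals $\delta(G)$, giving the desired bound $mp(G)\leq \delta(G)$.

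First I would observe that since $G$ has even order, an almost perfect matching of $G-F$ is impossible for any edge set $F$: an almost perfect matching requires the order to be odd by the defining equation $|V(G)|-1=2|M|$. Thus a matching preclusion set of $G$ need only kill every perfect matching of $G$.

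Next I would pick any vertex $v\in V(G)$ with $d_G(v)=\delta(G)$, and take $F$ to be the set of all edges incident with $v$. By definition $|F|=\delta(G)$. In the graph $G-F$ the vertex $v$ becomes isolated, so $v$ cannot be saturated by any matching; in particular $G-F$ has no perfect matching. Combined with the previous paragraph, $F$ is a matching preclusion set, hence $mp(G)\leq |F|=\delta(G)$.

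There is essentially no obstacle here, since the argument is a one-line construction. The only point worth stating carefully is the parity remark that rules out almost perfect matchings when $|V(G)|$ is even; this is what prevents one from needing a larger preclusion set than the trivial star at a minimum-degree vertex.
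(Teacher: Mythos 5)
Your proof is correct and is exactly the standard argument behind this cited result: deleting the $\delta(G)$ edges at a minimum-degree vertex isolates it, and the parity observation rules out almost perfect matchings in an even-order graph, so this star is a (trivial) matching preclusion set. This is precisely the notion of trivial matching preclusion set the paper uses throughout, so no further comment is needed.
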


To investigate the structure of optimal matching preclusion sets, Cheng et al.~\cite{8} gave the following concepts. A matching preclusion set $F$ of a graph $G$ is \emph{trivial} if all the edges of $F$ are those edges incident with a single vertex or two vertices according as $G$ has even or odd order. If $G$ has an optimal matching preclusion set $F$ that is trivial, then $G$ is called \emph{maximally matched}. If every optimal matching preclusion set of $G$ is trivial, then $G$ is called \emph{super matched}.

The \emph{Cartesian product} $G\Box H$ of graphs $G$ and $H$ is a graph such that the vertex set of $G\Box H$ is $V(G)\times V(H)$, and  two vertices $(u,u')$ and $(v,v')$ are adjacent in $G\Box H$ if and only if either $u=v$ and $u'$ is adjacent to $v'$ in $H$, or $u$ is adjacent to $v$ in $G$ and $u'=v'$.

The matching preclusion numbers and optimal matching preclusion sets of the following graphs were studied: Petersen graph, complete graph $K_n$, complete bipartite graph $K_{n,n}$ and  hypercube $Q_n$ by Brigham et al.~\cite{3}, complete bipartite graph $K_{n,n+1}$ by Cheng et al.~\cite{8}, even order $k$-ary $n$-cube $Q_n^k$ by Wang et al.~\cite{24}, tori and related Cartesian products by Cheng et al.~\cite{14}, balanced hypercube $BH_n$ by L\"{u} et al.~\cite{22}, crossed cube $CQ_n$ by Cheng et al.~\cite{20}.

Cheng et al.~\cite{5,6,24} considered bipartite graphs and showed that the matching preclusion number of a $k$-regular bipartite graph is $k$. The matching preclusion for Cayley graphs can be found in \cite{9,12,16}. Li et al.~\cite{LHZ} considered a general $k$-regular connected vertex-transitive graph with even order and showed that it has matching preclusion number $k$ and is super matched except for six classes of graphs. Almeida et al.~\cite{1} obtained some properties of the matching preclusion number of the Cartesian product graphs and computed the matching preclusion numbers of several interconnection networks. For other references on this topic, see~ \cite{2,7,10,15,17,18,19,21}.

The Cartesian product of $n$ paths is called the \emph{$n$-grid graph}. More precisely, let $n\geq 1$ be an integer. For each $0\leqslant i\leqslant n-1$, let $k_i \geq 2$ be an integer. Let $P_{k_{0}},P_{k_{1}},\ldots,P_{k_{n-1}}$ be paths. Here $P_{k}$ denotes a path with $k$ vertices. Then the Cartesian product $G=P_{k_{0}}\Box P_{k_{1}}\Box\cdots\Box P_{k_{n-1}}$ is called a $(k_0,k_1,\ldots,k_{n-1};n)$-grid graph, or simply an \emph{$n$-grid graph}, where each vertex $u$ of $G$ can be regarded as an $n$-tuple $(u_0,u_1,\ldots,u_{n-1})$ where $0\leqslant u_i\leqslant k_i-1$ for each $0\leqslant i\leqslant n-1$.

In this paper, we discuss the matching preclusion for $n$-grid graphs. The $n$-grid graphs are a generalization of hypercube $Q_n$, but not regular in other cases. In Section~\ref{section $n$-Grid graph}, we will give some structural properties of $n$-grid graphs. A crucial definition, that is $(f;4)$-cycle, is also given. In Section 3, the matching preclusion for even order $n$-grid graph is got. We treat first $2$-grid graphs and then $n$-grid graphs for $n\geqslant 3$. By using these results for even order $n$-grid graphs, we obtain matching preclusion for odd order $n$-grid graphs in Section 4.

The following is our main theorem, and implied by Theorems~\ref{thm even order mp}, \ref{thm odd PM number} and \ref{thm odd PM set}.

\begin{theorem}[Main theorem]
Let $G$ be an $n$-grid graph. If $n(G)$ is even, then $mp(G)=n$, and $G$ is super matched whenever $n\geqslant 3$. If $n(G)$ is odd, then $mp(G)=n+1$, moreover if $n\geqslant 2$, then $F\subseteq E(G)$ is an optimal matching preclusion set of $G$ if and only if $F$ consists of edges incident with a vertex $u=(u_0,u_1,\ldots,u_{n-1})$ with $d_G(u)=n+1$ and $\sum\limits_{i=0}^{n-1}u_i$ odd.
\end{theorem}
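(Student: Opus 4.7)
The plan is to split the main theorem along the two cases (even versus odd order) and, within each case, first compute the matching preclusion number and then classify the optimal sets; these are exactly the three subsidiary theorems cited. Both cases share a common framework: the $n$-grid graph $G$ is bipartite with bipartition $A\cup B$ given by the parity of the coordinate sum, and $\delta(G)=n$, realized exactly at the $2^n$ corner vertices (those with all coordinates in $\{0,k_i-1\}$). When $n(G)$ is even, some $k_i$ must be even, which breaks symmetry and allows a slicing/induction argument. When $n(G)$ is odd, all $k_i$ are odd (hence $k_i\geq 3$), and a short generating-function count gives $|A|-|B|=\prod_i((a_i+b_i)-(a_i-b_i))/\cdots = 1$; so $G$ has no perfect matching, and every almost perfect matching must leave a vertex of $A$ unmatched.

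For the even-order case, the bound $mp(G)\leq n$ is immediate from Proposition~\ref{pro even order mp number upper bound}. For the matching lower bound and for the super-matching property when $n\geq 3$, the plan is to induct on $n$, with $n=1$ (a single even path) trivial and $n=2$ (the two-dimensional grid) handled as a direct base case. For the inductive step, write $G=H\Box P_{k_{n-1}}$ with $k_{n-1}$ even (WLOG), so $G$ decomposes into $k_{n-1}$ copies of the $(n-1)$-grid $H$ linked sequentially; a perfect matching of $G$ is assembled by pairing consecutive slices via their connecting edges, and with fewer than $n$ deleted edges there is enough freedom (different pairings of slices, different slice-internal matchings inherited from the induction) to avoid all of $F$. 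Super-matching follows by taking any $F$ with $|F|=n$ that is not incident to a single vertex and exhibiting a perfect matching of $G-F$ via a refined version of the same construction.

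For the odd-order case, the upper bound $mp(G)\leq n+1$ is witnessed by the vertex $u=(1,0,\ldots,0)$ when $n\geq 2$: this $u\in B$ has degree exactly $n+1$, and isolating it destroys every almost perfect matching, since any such matching would have to leave a vertex of $A$ unmatched, not $u$. For $n=1$ the bound follows by a direct check on the odd path. The lower bound $mp(G)\geq n+1$ will be proved by assuming $|F|\leq n$ and constructing an almost perfect matching of $G-F$; here I would pick a suitable vertex $v\in A$ to omit and reduce to the existence of a perfect matching of $G-v$ avoiding $F$, so that the even-order result (possibly after a short ad hoc step to handle the modified boundary of $G-v$) can be invoked. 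For the characterization when $n\geq 2$, the task is to show that every optimal $F$ consists of the edges at one vertex $u\in B$ with $d_G(u)=n+1$; the plan is to suppose $F$ is not of this form and, splitting on how $F$ meets the relevant vertex stars and on which coordinate axis an edge of $F$ lies along, to construct an explicit almost perfect matching of $G-F$ using the same slicing machinery.

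The hardest step will be the characterization in the odd-order case: one must convert the existence of matchings in even-order subgrids avoiding a few forbidden edges into the existence of an almost perfect matching of $G-F$ leaving exactly the correct parity class unmatched, and coupling these parity and geometric constraints forces a careful case analysis driven by the location of the edges of $F$ and by which coordinate directions admit a compatible slicing.
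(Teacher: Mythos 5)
Your overall strategy (parity bipartition of $V(G)$, slicing along a coordinate, induction, degree-$(n+1)$ vertex of odd coordinate sum as the witness in the odd case) is the same skeleton the paper uses, but two of your reduction steps do not go through as stated. First, in the even-order case your plan to get super-matchedness for $n\geqslant 3$ by ``induction on $n$ with a refined version of the same construction'' breaks down precisely when only one $k_i$ is even: then every slice $G_0[j]$ is an \emph{odd}-order $(n-1)$-grid, so the inductive hypothesis about super-matched even-order $(n-1)$-grids says nothing about the slices, and deleted edges need not localize inside a single slice. The paper instead inducts on the number $n_e(G)$ of even sides and spends the bulk of the work on the base case $n_e(G)=1$ (Lemma~\ref{lem one even number edge set not mp set} and Theorem~\ref{thm one even PM set}), using $(f;4)$-cycle switchings and paths through $V_{allEven}$ vertices. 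Moreover your ``direct base case $n=2$'' hides a real phenomenon: even-order $2$-grids with $k_1=3$ are \emph{not} super matched (Lemmas~\ref{lem 2-grid PM set} and~\ref{lem special 2-grid PM set} exhibit and classify the nontrivial optimal sets), and this classification is not a side remark --- it is invoked again in the $n=2$ branch of the odd-order lower bound (Theorem~\ref{thm odd PM number}). A plan that treats $n=2$ as a routine base case misses both the exceptional sets and their later use.

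Second, in the odd-order lower bound your reduction ``omit a suitable $v\in A$ and invoke the even-order result for $G-v$'' is not available: $G-v$ is not a grid graph, so Theorem~\ref{thm even order mp} does not apply to it, and the ``short ad hoc step to handle the modified boundary'' is exactly the missing content. The paper needs a separate induction (Lemma~\ref{lem mp(G-u) for u in VallEvenG odd order}: $mp(G-u)=n$ for $u\in V_{allEven}(G)$), and even with such a lemma the bound alone is insufficient when $|F|=n$, since $G-v-F$ can fail to have a perfect matching unless $v$ is chosen as a function of $F$ (e.g.\ when $F$ is the star of a corner vertex you are forced to take $v$ equal to that vertex). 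The paper's actual argument in Theorem~\ref{thm odd PM number} avoids deleting a vertex altogether: it splits $G$ at a position $j$ into the even-order grid $H_{j1}=G-G_j[k_j-1]$ and the odd-order grid $H_{j2}=G_j[k_j-1]$ and runs a case analysis on how $F$ distributes over these pieces, which is where the even-order results (including the $2$-grid classification) legitimately enter. The characterization of optimal sets, which you correctly flag as the hardest step, is likewise left entirely as a plan; in the paper it requires combining Theorem~\ref{thm one even PM set}, Lemma~\ref{lem mp(G-u) for u in VallEvenG odd order}, Lemma~\ref{lem Sumeven apm} and nice-cycle switchings, so as it stands your proposal has genuine gaps at the places where the real work lies.
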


The conditional matching preclusion number and optimal conditional matching preclusion sets of $n$-grid graphs will be given elsewhere.

\section{Preliminaries for $n$-grid graphs}\label{section $n$-Grid graph}


\subsection{Basic notations}\label{subsection Basic notations}

Let $G$ be a graph. Let $F,U,H$ be a subset of $E(G)$, a subset of $V(G)$ and a subgraph of $G$, respectively. The subgraph $G-F$ is obtained from $G$ by deleting all the edges in $F$. When $F=\{f\}$ for some edge $f\in E(G)$, we use $G-f$ to denote $G-\{f\}$. The subgraph $G-U$ is obtained from $G$ by deleting all the vertices in $U$ together with their incident edges. When $U=\{u\}$ for some vertex $u\in V(G)$, we use $G-u$ instead of $G-\{u\}$. The subgraph $G-H$ is $G-V(H)$.

Let $f\in E(G)$. We use $d_F(f)$ to denote the size of the set $\{g\in F\mid g$ is incident with $f\}$.

Let $v\in V(G)$. If there exists an edge $e\in F$ such that $e$ is incident with $v$, then we say \emph{$F$ covers $v$}, otherwise we say \emph{$F$ uncovers $v$}. We say \emph{$F$ uncovers $U$} if $F$ uncovers $u$ for any vertex $u\in U$. We say \emph{$F$ uncovers $H$} if $F$ uncovers $V(H)$.

Let $A,B$ be subsets of $X$. The set $A\setminus B=\{x\in X\mid x\in A,x\notin B\}$. The symmetric difference $A\Delta B$ of $A$ and $B$ is $(A\setminus B)\cup (B\setminus A)$. Let $M$ be a matching of $G$. A cycle $C$ is called an \emph{$M$-alternating cycle} if the edges of $C$ alternate in $M$ and $E(G)\setminus M$. We identify the edge set $E(C)$ with the cycle $C$ without confusion and we use $M\Delta C$ to denote the symmetric difference $M\Delta E(C)$.


\subsection{Structure of $n$-grid graphs}\label{subsection Structure of the $n$-grid graph}

We suppose $G$ is a $(k_0,k_1,\ldots,k_{n-1};n)$-grid graph in this subsection. For simplicity, for each $0\leqslant i\leqslant n-1$, we use $V(P_{k_{i}})=\{0,1,\ldots,k_{i}-1\}$ and $E(P_{k_{i}})=\{j(j+1)~|~j\in \{0,1,\ldots,k_{i}-2\}\}$ without confusion. The order of $G$ is $n(G)=k_{0}k_{1}\cdots k_{n-1}$. Two vertices $v=(v_{0},v_{1},\ldots,v_{n-1})$ and $u=(u_{0},u_{1},\ldots,u_{n-1})$ of $G$ are adjacent if and only if there exists an integer $i_{0}\in \{0,1,\ldots,n-1\}$ such that $|v_{i_{0}}-u_{i_{0}}|=1$ and $v_{j}=u_{j}$ for every $j\in\{0,1,\ldots,n-1\}\setminus \{i_{0}\}$. We say that the \emph{position} of the edge $f=uv\in E(G)$ is $i_{0}$ or $u$ and $v$ are adjacent at position $i_{0}$.

Obviously, the maximum degree $\Delta(G)=2n-n_{2}$, where $n_{2}$ is the number of $2$'s in $k_{0},k_{1},\ldots,k_{n-1}$, and the minimum degree $\delta(G)=n$. We see that $\Delta(G)=\delta(G)$ if and only if $n_2=n$ which means $G$ is the hypercube $Q_n$. (This is the only case $G$ is regular.) The graph $G$ is not regular when $n_2<n$. If the degree of a vertex $v$ is $n$, then $v=(v_{0},v_{1},\ldots,v_{n-1})$ satisfies that for each $0\leqslant i\leqslant n-1$, $v_{i}=0$ or $k_{i}-1$. Let \begin{center}$V_{\delta}(G)=\{v\in V(G)\mid d_G(v)=\delta(G)\}$.\end{center} Then we have $|V_{\delta}(G)|=2^{n}$. If the degree of a vertex $v$ is $2n-n_{2}$, without loss of generality we may assume that the first $n_{2}$ numbers of $k_{0},k_{1},\ldots,k_{n-1}$ are $2$'s, then $v=(v_{0},v_{1},\ldots,v_{n-1})$ satisfies that for each $0\leqslant i \leqslant n_2-1$, we have $v_{i}=0$ or $1$, and for each $n_{2}\leqslant j \leqslant n-1$, we have $0<v_{j}<k_{j}-1$. Let \begin{center}$V_{\Delta}(G)=\{v\in V(G)\mid d_G(v)=\Delta(G)\}$.\end{center} Then we have $|V_{\Delta}(G)|=\prod\limits_{i=n_2}^{n-1}(k_i-2)$ when $n_2 < n$, or $|V_{\Delta}(G)|=2^n=|V_{\delta}(G)|$ when $n_2 = n$.

An index $d\in \{0,1,\ldots,n-1\}$ is often referred as a position. Now we fix a position $d\in \{0,1,\ldots,n-1\}$, and define
$E_{d}(G)=\{f\in E(G)\mid$ the position of $f$ is $d\}.$ For each $j\in \{0,1,\ldots,k_{d}-1\}$, let $G_{d}[j]$ be the subgraph of $G$ induced by $\{u\in V(G)\mid u=(u_{0},u_{1},\ldots,u_{n-1}), u_d=j\}.$ Then $G$ has a partition \begin{center}$G = G_{d}[0]\cup G_{d}[1]\cup \cdots\cup G_{d}[k_{d}-1]\cup E_{d}(G)$,\end{center} and the connected components of $G-E_{d}(G)$ are $G_{d}[0],G_{d}[1],\ldots, G_{d}[k_{d}-1]$. In this case we say that $G$ is partitioned at position $d$, or $G_{d}[0]\cup G_{d}[1]\cup \cdots\cup G_{d}[k_{d}-1]\cup E_{d}(G)$ is a partition of $G$ at position $d$. It is easy to see that for each $0\leqslant j\leqslant k_{d}-1$, \begin{center}$G_{d}[j]=P_{k_{0}}\Box\cdots\Box P_{k_{d-1}}\Box \{j\}\Box P_{k_{d+1}}\Box\cdots\Box P_{k_{n-1}}$\end{center} is an $(n-1)$-grid graph. Note that the corresponding vertices in $G_{d}[0],G_{d}[1],\ldots, G_{d}[k_{d}-1]$ are joined through paths of length $k_{d}-1$ at position $d$. For each $j\in \{0,1,\ldots,k_{d}-2\}$, let $E_{d}^{j,j+1}(G)$ be the set of edges between $G_{d}[j]$ and $G_{d}[j+1]$ in $G$, that is $E_{d}^{j,j+1}(G)=\{f\in E(G)\mid$ the position of $f=uv$ is $d$, $u=(u_{0},u_{1},\ldots, u_{n-1}), v=(v_{0},v_{1},\ldots, v_{n-1})$ and $\{u_{d}, v_{d}\}=\{j, j+1\}\}\subseteq E_{d}(G).$ Then \begin{center}$E_{d}(G) = E_{d}^{0,1}(G)\cup E_{d}^{1,2}(G)\cup \cdots\cup E_{d}^{k_{d}-2,k_{d}-1}(G)$.\end{center} Now we have a refined partition \begin{center}$G=G_{d}[0]\cup E_{d}^{0,1}(G)\cup G_{d}[1]\cup E_{d}^{1,2}(G)\cup\cdots\cup G_{d}[k_{d}-2]\cup E_{d}^{k_{d}-2,k_{d}-1}(G)\cup G_{d}[k_{d}-1]$\end{center} of the $n$-grid graph $G$ at position $d$.

For each $j\in \{0,1,\ldots,k_{d}-1\}$, and any vertex $v=(v_{0},v_{1},\ldots, v_{n-1})$ in $G_{d}[j]$, we have $v_d=j$. When $j\geqslant 1$, the corresponding vertex to $v$ in $G_{d}[j-1]$ exists, and we denote it by $v^{-}$, i.e. $v^{-}=(v_{0},v_{1},\ldots,v_{d-1},j-1,v_{d+1},\ldots,v_{n-1})$. When $j\leqslant k_{d}-2$, the corresponding vertex to $v$ in $G_{d}[j+1]$ exists, and we denote it by $v^{+}$, i.e. $v^{+}=(v_{0},v_{1},\ldots,v_{d-1},j+1,v_{d+1},\ldots,v_{n-1})$.

The subgraph $G[E_{d}]$ induced by $E_{d}=E_{d}(G)$ is the union of $n(G)/k_{d}$ disjoint paths of length $k_{d}-1$. If $k_{d}$ is even, then $G[E_{d}]$ contains a perfect matching $M_{d}$ of $G$, where \begin{center}$M_{d}=E_{d}^{0, 1}(G)\cup E_{d}^{2, 3}(G)\cup \cdots \cup E_{d}^{k_{d}-2, k_{d}-1}(G)$.\end{center} 
Generally, if the order $n(G)$ is even, then $G$ has a perfect matching. If the order $n(G)$ is odd, then $G$ has no perfect matching, but it has almost perfect matchings. Several lemmas about these results are in Subsection~\ref{subsection Properties of the $n$-grid graph}. So in the following sections we will discuss the matching preclusion for the $n$-grid graph $G$ in two cases: it has a perfect matching or an almost perfect matching.

To end this subsection, we define an $(f;4)$-cycle (see Figure~\ref{fig: fdjfourcycle}), which will be used frequently in the following sections. We assume $n\geqslant 2$. Let $f\in E(G)$ whose position is $d$ where $0\leqslant d\leqslant n-1$. Suppose $f=uv$, $u=(a_0,a_1,\ldots,a_{d-1},j,a_{d+1},\ldots,a_{n-1})$ and $v=(a_0,a_1,\ldots,a_{d-1},j+1,a_{d+1},\ldots,a_{n-1})$ for some $0\leqslant j\leqslant k_{d}-2$. Then $u\in V(G_d[j])$ and $v\in V(G_d[j+1])$. Let $\delta(G_d[j])$ and $\delta(G_d[j+1])$ be the minimum degree of $V(G_d[j])$ and $V(G_d[j+1])$, respectively. Since $d_{G_d[j]}(u)=d_{G_d[j+1]}(v)\geqslant \delta(G_d[j])=\delta(G_d[j+1]) = n-1\geqslant 1$, there exist vertices $u'=(a_0',a_1',\ldots,a_{d-1}',j,a_{d+1}',\ldots,a_{n-1}')$ and $v'=(a_0',a_1',\ldots,a_{d-1}',j+1,a_{d+1}',\ldots,a_{n-1}')$ such that $u'$ is a neighbor of $u$ in $G_d[j]$ and $v'$ is a neighbor of $v$ in $G_d[j+1]$. Then $u'$ is adjacent to $v'$ at position $d$. Let $C$ be the cycle induced by vertices $u,v,v',u'$. The cycle $C$ is of length $4$. Then we call $C$ an \emph{$(f,d,j;4)$-cycle} or \emph{$(f;4)$-cycle} for short. Let $d'$ be the position of $uu'$. Then the position of $vv'$ is $d'$. In this case, we denote the $(f,d,j;4)$-cycle $C$ as an $(f,d,j,d';4)$-cycle. Fix the edge $f$, the number of $(f,d,j;4)$-cycles is $d_{G_d[j]}(u)=d_{G_d[j+1]}(v)$. Let $M$ be a matching of $G$ such that $E_d^{j,j+1}(G)\subseteq M$, and let $C$ be an $(f,d,j;4)$-cycle. Then $C$ is an $M$-alternating cycle and $f\in M\cap E(C)$. So $f\not\in M\Delta C$.

\newsavebox{\fdjfourcycle}
\savebox{\fdjfourcycle}{
\setlength{\unitlength}{1em}
\begin{picture}(12,9)\label{pic fdjfourcycle}
\put(2.5,5){\oval(3,6)}
\put(9.5,5){\oval(3,6)}
\put(2.5,6){\line(1,0){7}}
\put(2.5,4){\line(1,0){7}}
\put(2.5,4){\line(0,1){2}}
\put(9.5,4){\line(0,1){2}}
\put(2.5,4){\circle*{0.3}}
\put(2.5,6){\circle*{0.3}}
\put(9.5,4){\circle*{0.3}}
\put(9.5,6){\circle*{0.3}}
\put(5.5,6.5){$f$}
\put(5.5,8){$M$}
\put(5.5,4.5){$C$}
\put(1.5,6){$u$}
\put(1.5,3.5){$u'$}
\put(10,3.5){$v'$}
\put(10,6){$v$}
\put(1.5,0.5){$G_d[j]$}
\put(7.5,0.5){$G_d[j+1]$}
\put(4,2){$E_d^{j,j+1}(G)$}
\end{picture}}

\begin{figure}[ht]
\centering
\usebox{\fdjfourcycle}
\caption{An $(f,d,j;4)$-cycle $C$.}
\label{fig: fdjfourcycle}
\end{figure}


\subsection{Properties of $n$-grid graphs}\label{subsection Properties of the $n$-grid graph}

\begin{lemma}
Let $G$ be a $(k_0,k_1,\ldots,k_{n-1};n)$-grid graph. If 
the order $n(G)$ is even, then $G$ has a perfect matching.
\end{lemma}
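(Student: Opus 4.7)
The plan is to exploit the fact that the order $n(G) = k_0 k_1 \cdots k_{n-1}$ being even forces at least one factor $k_d$ to be even. I would pick such an index $d \in \{0,1,\ldots,n-1\}$ and then use the explicit construction already foreshadowed in Subsection 2.2, namely the edge set
\[
M_d \;=\; E_d^{0,1}(G) \cup E_d^{2,3}(G) \cup \cdots \cup E_d^{k_d-2,\,k_d-1}(G),
\]
consisting of the ``even-index'' edges in direction $d$. My goal is to show that $M_d$ is a perfect matching of $G$.

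To verify that $M_d$ covers every vertex exactly once, I would fix an arbitrary $v=(v_0,v_1,\ldots,v_{n-1}) \in V(G)$ and examine its $d$-th coordinate. Since $k_d$ is even, the integer $v_d \in \{0,1,\ldots,k_d-1\}$ lies in a unique block $\{2t, 2t+1\}$ with $0 \leqslant t \leqslant k_d/2 - 1$. The edge of $G$ joining $v$ to the vertex $v'$ that agrees with $v$ in every coordinate except the $d$-th, and whose $d$-th coordinate is the other element of $\{2t,2t+1\}$, then lies in $E_d^{2t,\,2t+1}(G) \subseteq M_d$. No other edge of $M_d$ is incident with $v$, because any edge in $E_d^{2s,\,2s+1}(G)$ for $s \neq t$ has both endpoints with $d$-coordinate in $\{2s,2s+1\}$, which does not contain $v_d$. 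Hence $M_d$ is a perfect matching.

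I do not expect any real obstacle in this lemma; it is essentially a direct consequence of the refined partition of $n$-grid graphs described in Subsection 2.2, together with the trivial number-theoretic observation that an even product must have at least one even factor. (An equally short alternative is to recall that $P_{k_d}$ has a perfect matching whenever $k_d$ is even, and then lift this matching to the full Cartesian product via the copies of $P_{k_d}$ through each vertex in the remaining coordinates, but the direct argument above avoids invoking any outside fact.)
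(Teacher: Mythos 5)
Your proposal is correct and is essentially the paper's own proof: both note that an even order forces some $k_d$ to be even and then take the matching $M_d=E_d^{0,1}(G)\cup E_d^{2,3}(G)\cup\cdots\cup E_d^{k_d-2,k_d-1}(G)$ already described in Subsection 2.2. Your explicit verification that every vertex is covered exactly once is just a spelled-out version of what the paper leaves implicit.
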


\begin{proof}
We may suppose $k_{d}$ is even for some $0\leqslant d \leqslant n-1$. Let $M_{d}$ be defined as in the penultimate paragraph of Subsection~\ref{subsection Structure of the $n$-grid graph}. Then $M_{d}$ is a perfect matching of $G$.
\end{proof}

Let $G$ be a $(k_0,k_1,\ldots,k_{n-1};n)$-grid graph and let \begin{center}$V_{allEven}(G)=\biggl\{ u=(u_{0},u_{1},\ldots,u_{n-1})\in V(G)~\biggl|~$ $u_{i}$ is even for each $0\leqslant i \leqslant n-1$ $\biggr\}$.\end{center} Let $d\in \{0,1,\ldots,n-1\}$ be a position and let $u=(u_{0},u_{1},\ldots,u_{n-1})\in V(G)$ such that $u_i$ is even for $i\in\{0,1,\ldots,n-1\}\setminus \{d\}$. Then $G_d[u_d]$ is an $(n-1)$-grid graph, $u\in G_d[u_d]$ and $u\in V_{allEven}(G_d[u_d])$.

The following lemmas are properties of odd order $n$-grid graphs. Let the order $n(G)$ be odd. Then $k_i\geqslant 3$ is odd for $0\leqslant i \leqslant n-1$. So $V_\delta(G)\subseteq V_{allEven}(G)$.

\begin{lemma}\label{lem Alleven apm}
Let $G$ be an odd order $(k_0,k_1,\ldots,k_{n-1};n)$-grid graph, and let $u\in V_{allEven}(G)$. Then $G$ has an almost perfect matching $M_u$ which uncovers $u$.
\end{lemma}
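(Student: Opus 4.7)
The plan is to induct on $n$. Since $n(G)=k_0k_1\cdots k_{n-1}$ is odd, every $k_i$ is odd (so $k_i\geqslant 3$), and each coordinate of $u$ is an even integer in $\{0,2,\ldots,k_i-1\}$. For the base case $n=1$, the graph is the odd path $P_{k_0}$ with $u=u_0$ even, and I would write down the explicit almost perfect matching
$$M_u=\{(2i)(2i+1)\mid 0\leqslant i\leqslant u/2-1\}\cup\{(2i+1)(2i+2)\mid u/2\leqslant i\leqslant (k_0-3)/2\},$$
which pairs vertices as $(0,1),(2,3),\ldots,(u-2,u-1)$ on the left of $u$ and $(u+1,u+2),\ldots,(k_0-2,k_0-1)$ on the right, leaving only $u$ uncovered.

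For the inductive step $n\geqslant 2$, I would fix any position $d$ and use the refined partition
$$G=G_d[0]\cup E_d^{0,1}(G)\cup G_d[1]\cup\cdots\cup E_d^{k_d-2,k_d-1}(G)\cup G_d[k_d-1]$$
introduced in Subsection~\ref{subsection Structure of the $n$-grid graph}. The slice $G_d[u_d]$ is an odd order $(n-1)$-grid graph containing $u$; viewing its coordinates as indexed by $\{0,\ldots,n-1\}\setminus\{d\}$, the vertex $u$ lies in $V_{allEven}(G_d[u_d])$. The induction hypothesis then yields an almost perfect matching $M_1$ of $G_d[u_d]$ that uncovers $u$.

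It remains to perfectly match every vertex of the remaining $k_d-1$ slices. Because $u_d$ is even and $k_d-1$ is even, the indices $\{0,1,\ldots,k_d-1\}\setminus\{u_d\}$ split into consecutive pairs $(2i,2i+1)$ with $0\leqslant i\leqslant u_d/2-1$ below $u_d$, and $(u_d+2i+1,u_d+2i+2)$ with $0\leqslant i\leqslant (k_d-3-u_d)/2$ above $u_d$ (both ranges are empty in the extreme cases $u_d=0$ or $u_d=k_d-1$). For each such pair $(j,j+1)$, the cross-edge set $E_d^{j,j+1}(G)$ is already a perfect matching between $G_d[j]$ and $G_d[j+1]$, sending each $v\in G_d[j]$ to its counterpart $v^{+}\in G_d[j+1]$. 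Therefore
$$M_u=M_1\cup\bigcup_{(j,j+1)\text{ paired}}E_d^{j,j+1}(G)$$
is an almost perfect matching of $G$ that uncovers precisely $u$. I do not anticipate a serious obstacle; the only point needing care is the parity bookkeeping of the slice pairing, which works precisely because $u_d$ is even and $k_d$ is odd, both of which are guaranteed by the hypotheses $u\in V_{allEven}(G)$ and $n(G)$ odd.
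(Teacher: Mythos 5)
Your proposal is correct and follows essentially the same route as the paper: induction on $n$ with the explicit matching on the odd path as base case, and in the inductive step applying the hypothesis to the slice $G_d[u_d]$ and covering the remaining slices by the cross-edge sets $E_d^{j,j+1}(G)$ paired as $(0,1),\ldots,(u_d-2,u_d-1)$ and $(u_d+1,u_d+2),\ldots,(k_d-2,k_d-1)$, exactly as in the paper's construction of $M_u$.
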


\begin{proof}
Suppose $u=(u_{0},u_{1},\ldots,u_{n-1})\in V_{allEven}(G)$. We will prove this result by induction on $n$. When $n=1$, $G$ is a path $P_{k}=v_{0}v_{1}\cdots v_{k-1}$ of length $k-1$, where $k=k_{0}\geq 3$ is odd and $v_j=j$ for each $0\leqslant j\leqslant k-1$. If $u=v_j$ is even, then $M_u=\{v_{0}v_{1},\ldots,v_{j-2}v_{j-1},v_{j+1}v_{j+2},\ldots,v_{k-2}v_{k-1}\}$ is an almost perfect matching of $G$ such that $M_u$ uncovers $u$.

We suppose that $n\geqslant 2$. Let $d\in \{0,1,\ldots,n-1\}$. Then $G=G_{d}[0]\cup G_{d}[1]\cup\cdots\cup G_{d}[k_{d}-1]\cup E_{d}(G)$. For each $j\in\{0,1,\ldots,k_{d}-1\}$, $G_{d}[j]$ is isomorphic with an odd order $(n-1)$-grid graph. So the result is true for $G_{d}[j]$ by induction hypothesis. Since $u\in V_{allEven}(G_{d}[u_{d}])$, we can find an almost perfect matching $M_d[u_{d}]$ of $ G_{d}[u_{d}]$ such that $M_d[u_{d}]$ uncovers $u$. Then $M_u=E_{d}^{0,1}(G)\cup E_{d}^{2,3}(G)\cup\cdots\cup E_{d}^{u_{d}-2,u_{d}-1}(G)\cup M_d[u_{d}]\cup E_{d}^{u_{d}+1,u_{d}+2}(G)\cup\cdots\cup E_{d}^{k_{d}-2,k_{d}-1}(G)$ is an almost perfect matching of $G$ such that $M_u$ uncovers $u$.
\end{proof}

Let $u\in V_{allEven}(G)$. Then $G-u$ has a perfect matching by Lemma~\ref{lem Alleven apm}. Here we calculate the matching preclusion number of $G-u$.

\begin{lemma}\label{lem mp(G-u) for u in VallEvenG odd order}
Let $G$ be an odd order $(k_0,k_1,\ldots,k_{n-1};n)$-grid graph, and let $u\in V_{allEven}(G)$. Then $mp(G-u)=n$.
\end{lemma}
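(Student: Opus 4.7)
The plan is to split the proof into an upper bound $mp(G-u)\le n$ from Proposition~\ref{pro even order mp number upper bound} and a matching lower bound by induction on $n$. For the upper bound I would verify $\delta(G-u)=n$. Since every $k_i$ is odd, the $2^n$ elements of $V_\delta(G)$ are the ``corners'' having each coordinate in $\{0,k_i-1\}$, all even. Because $u\in V_{allEven}(G)$, every neighbor of $u$ differs from $u$ in exactly one coordinate by $\pm1$, hence has one odd coordinate and so is not a corner; consequently $d_G(v)\ge n+1$ for each neighbor $v$ of $u$, giving $d_{G-u}(v)\ge n$. Meanwhile at least one corner of $G$ survives in $G-u$ with degree $n$ (no corner is adjacent to $u$ since any two all-even tuples differ coordinatewise by even values). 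Hence $\delta(G-u)=n$ and Proposition~\ref{pro even order mp number upper bound} yields $mp(G-u)\le n$.

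For the lower bound I would prove that for every $F\subseteq E(G-u)$ with $|F|=n-1$ the graph $G-u-F$ admits a perfect matching, by induction on $n$. The base case $n=1$ is immediate: $G-u$ is a disjoint union of paths of even order, so $mp(G-u)=1=n$. For the inductive step ($n\ge 2$), pigeonhole on the fact that $|F|=n-1<n$ yields some $d\in\{0,\ldots,n-1\}$ with $F\cap E_d(G)=\emptyset$; partition $G$ at $d$, set $a:=u_d$ (even), and let $F_j:=F\cap E(G_d[j])$. Because $a$ is even and $k_d$ is odd, the layers $\{0,\ldots,k_d-1\}\setminus\{a\}$ pair up as $(0,1),(2,3),\ldots,(a-2,a-1),(a+1,a+2),\ldots,(k_d-2,k_d-1)$, and each pair $(j,j+1)$ can be matched by the full edge set $E_d^{j,j+1}(G)$, which is disjoint from $F$. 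When $|F_a|\le n-2$, the inductive hypothesis applied to $G_d[a]$ (an odd-order $(n-1)$-grid containing $u\in V_{allEven}(G_d[a])$) produces a perfect matching of $G_d[a]-u-F_a$; together with the pair matchings this is a perfect matching of $G-u-F$.

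The hard case is $|F_a|=n-1$, i.e.\ $F=F_a\subseteq E(G_d[a])$, where $G_d[a]-u-F$ may have no perfect matching at all. My plan is first to look for an alternative direction $d'\in D:=\{d'\mid F\cap E_{d'}(G)=\emptyset\}$ with $|F\cap E(G_{d'}[u_{d'}])|\le n-2$ and rerun the previous construction at $d'$. When no such $d'$ exists, $F$ is forced into $\bigcap_{d'\in D}E(G_{d'}[u_{d'}])$, a subgrid of dimension $n-|D|$, and one has to work harder. In that residual case I would use the $(f;4)$-cycle tool from the end of Section~\ref{subsection Structure of the $n$-grid graph}: choosing $f\in F$ and an $(f;4)$-cycle $C$ that spans layers $a$ and $a+1$ (or $a-1$) through direction $d$, one toggles the matching along $C$ to trade one layer-$a$ matching edge for two vertical edges in $E_d^{a,a\pm1}(G)$, effectively shifting the missing vertex $u$ into an adjacent, $F$-free layer where a perfect matching can be completed. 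The main technical burden will be showing that at least one such $(f;4)$-cycle exists entirely outside $F$ and that the globally rerouted matching closes up consistently.
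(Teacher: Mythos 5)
Your upper bound ($\delta(G-u)=n$ plus Proposition~\ref{pro even order mp number upper bound}) and your subcase $|F_{a}|\leqslant n-2$ are fine, but the lower bound has a genuine gap precisely in the case you yourself flag as hard. When every fault-free direction $d'$ has all of $F$ inside $E(G_{d'}[u_{d'}])$, the induction hypothesis gives you nothing: $mp(G_{d'}[u_{d'}]-u)=n-1=|F|$, so the $u$-layer minus $u$ minus $F$ may simply have no perfect matching, and then there is no matching of that layer to ``reroute.'' The $(f;4)$-cycle surgery you sketch is not an argument: the $4$-cycle through $f$ and two edges of $E_{d}^{a,a\pm 1}(G)$ need not be alternating with respect to the matching you have built (your construction puts $E_{d}^{j,j+1}(G)$ into the matching only for the \emph{paired} layers, not around layer $a$), and ``shifting the missing vertex $u$ into an adjacent layer'' is not a defined operation. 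A concrete instance already occurs at $n=2$: take $G=P_{3}\Box P_{3}$, $u=(0,0)$, and $F=\{f\}$ with $f$ the edge joining $(0,1)$ and $(0,2)$. The only fault-free direction is $d=0$, the fault lies in $G_{0}[0]$, which is the layer containing $u$, and $G_{0}[0]-u-f$ is two isolated vertices; the perfect matching of $G-u-F$ that does exist (for example the edges $(0,1)(1,1)$, $(0,2)(1,2)$, $(1,0)(2,0)$, $(2,1)(2,2)$) is not obtained from your layer-pairing matching by toggling a single $4$-cycle. So the residual case is left unproven, and it is exactly the substance of the lemma.

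The paper avoids this case altogether by making the opposite choice of partition direction: it takes $d$ to be the \emph{position of a fault edge} $f\in F$ (possible whenever $F\neq\emptyset$). Then $f\in E_{d}(G)$ lies in no layer, so $|F\cap E(G_{d}[j])|\leqslant n-2$ for every $j$ and the induction hypothesis applies to every layer, including the one containing $u$. It then chooses a transversal path $P=v_{0}v_{1}\cdots v_{k_{d}-1}$ of corresponding all-even vertices that avoids $F$ (possible since there are $2^{n-1}>n-1\geqslant|F|$ such paths), matches $P-v_{u_{d}}$, matches $(G_{d}[u_{d}]-u)-F_{u_{d}}$ by induction, and matches $(G_{d}[j]-v_{j})-F_{j}$ by induction for $j\neq u_{d}$. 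Your argument could be repaired by falling back to exactly this fault-position choice of $d$ in your residual case, but as written that case stands unresolved.
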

\begin{proof}
Since $u\in V_{allEven}(G)$, then $\delta (G-u)=n$, and $mp(G-u)\leqslant n$ by Lemma~\ref{pro even order mp number upper bound}. Next we will show, by induction on $n$, that $mp(G-u)\geqslant n$, i.e. for any edge set $F\subseteq E(G-u)$ with $|F|\leqslant n-1$, $(G-u)-F$ has a perfect matching.

Suppose $F\subseteq E(G-u)$ with $|F|\leqslant n-1$. When $n=1$, the set $F=\emptyset$ and $G-u$ has a perfect matching by Lemma~\ref{lem Alleven apm}. Now assume $n\geqslant 2$. Let $f\in F$ with position $d$ for some $d\in \{0,1,\ldots,n-1\}$. Let $u=(u_{0},u_{1},\ldots,u_{n-1})$. Then $G=G_{d}[0]\cup G_{d}[1]\cup \cdots\cup G_{d}[k_{d}-1]\cup E_{d}(G)$, $f\in E_{d}(G)$ and $u\in V_{allEven}(G_{d}[u_d])$. Let $x_{s}$ be an even number for each $s\in \{0,1,\ldots,n-1\}\setminus \{d\}$. For each $j\in \{0,1,\ldots,k_{d}-1\}$, let $G_{j}=G_{d}[j]$, $F_{j}=F\cap E(G_{j})\subseteq F\setminus \{f\}$ and $v_{j}=(x_0,x_1\ldots,x_{d-1}, j,x_{d+1},\ldots, x_{n-1})\in V(G_{j})$. Then $P=v_{0}v_{1}\ldots v_{k_{d}-1}$ is a path of length an even number $k_d-1$, $G_{j}$ is an odd order $(n-1)$-grid graph, $|F_{j}|\leqslant n-2$ and $v_{j}\in V_{allEven}(G_{j})$. For each $s\in \{0,1,\ldots,n-1\}\setminus \{d\}$, $k_s\geqslant 3$ is odd and $0\leqslant x_s\leqslant k_s-1$ is even. Then $x_{s}$ has at least two choices. So there are at least $2^{n-1} > n-1\geqslant |F|$ choices of the path $P=v_{0}v_{1}\ldots v_{k_{d}-1}$, and we can choose one which is disjoint with $F$. Let $P=v_{0}v_{1}\ldots v_{k_{d}-1}$ be such a path that $E(P)\cap F=\emptyset$. Since $u_d$ is even, we have $v_{u_{d}}\in V_{allEven}(P)$. Then $P-v_{u_{d}}$ has a perfect matching $M_{P}$ by Lemma~\ref{lem Alleven apm}. By induction hypothesis, $(G_{u_{d}}-u)-F_{u_{d}}$ has a perfect matching $M_{u,u_{d}}$, and $(G_{j}-v_{j})-F_{j}$ has a perfect matching $M_{u}^{j}$ for each $j\in \{0,1,\ldots,k_{d}-1\}\setminus \{u_{d}\}$. Then $M = M_{u}^{0}\cup M_{u}^{1}\cup \cdots \cup M_{u}^{u_{d}-1}\cup M_{u,u_{d}} \cup M_{u}^{u_{d}+1} \cup \cdots \cup M_{u}^{k_{d}-1}\cup M_{P}$ is a perfect matching of $(G-u)-F$.
\end{proof}

Let
\begin{center}
$V_{e}(G)=\biggl\{u=(u_{0},u_{1},\ldots,u_{n-1})\in V(G)~\biggl|~\sum\limits_{i=0}^{n-1}u_{i} $ is even $\biggr\}$
\end{center}
and
\begin{center}
$V_{o}(G)=\biggl\{u=(u_{0},u_{1},\ldots,u_{n-1})\in V(G)~\biggl|~\sum\limits_{i=0}^{n-1}u_{i} $ is odd $\biggr\}$.
\end{center}
The following two lemmas show that $G$ has an almost perfect matching $M_u$ which uncovers the vertex $u\in V_{e}(G)$, and a nice partition of $V(G)$ into $V_{e}(G)$ and $V_{o}(G)$.

\begin{lemma}\label{lem Sumeven apm}
Let $G$ be an odd order $(k_0,k_1,\ldots,k_{n-1};n)$-grid graph, and let $u\in V_{e}(G)$. Then $G$ has an almost perfect matching $M_u$ such that $M_u$ uncovers $u$.
\end{lemma}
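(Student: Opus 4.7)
My plan is induction on $n$. The base case $n=1$ is immediate because $V_e(P_{k_0})=V_{allEven}(P_{k_0})$ for odd $k_0$, so Lemma~\ref{lem Alleven apm} applies directly. For $n\geq 2$ let $u=(u_0,\ldots,u_{n-1})\in V_e(G)$ and distinguish two cases.

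First, suppose some coordinate $u_d$ is even. I partition $G$ at position $d$ and view $u$ as a vertex of $G_d[u_d]$; its projected coordinate sum is $\sum u_i-u_d$, which is still even, so $u\in V_e(G_d[u_d])$. The inductive hypothesis produces an almost perfect matching $M'$ of the odd-order $(n-1)$-grid $G_d[u_d]$ uncovering $u$. Because $u_d$ is even and $k_d$ is odd, the remaining $k_d-1$ copies group into consecutive pairs $(0,1),\ldots,(u_d-2,u_d-1),(u_d+1,u_d+2),\ldots,(k_d-2,k_d-1)$, each perfectly matched by the cross edges $E_d^{j,j+1}(G)$. Taking $M'$ together with these matchings gives the desired almost perfect matching of $G$.

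The hard case is when every $u_d$ is odd, which forces $n$ to be even. The naive partition now fails because in every $G_d[u_d]$ the projected vertex lies in $V_o$, not $V_e$, of the sub-grid. My device is to decompose $G=G_0\Box G'$ with $G_0=P_{k_0}\Box P_{k_1}$ and $G'=P_{k_2}\Box\cdots\Box P_{k_{n-1}}$ (when $n=2$, $G'$ collapses to a one-vertex graph). Write $u=(v_0,v')$ with $v_0=(u_0,u_1)$ and $v'=(u_2,\ldots,u_{n-1})$; then $v_0\in V_e(G_0)$ (two odd entries sum to an even number) and $v'\in V_e(G')$ ($n-2$ odd entries, with $n-2$ even, sum to even). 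The inductive hypothesis applied to the $(n-2)$-grid $G'$ yields an almost perfect matching $M_{G'}$ of $G'$ uncovering $v'$. For the 2-grid factor $G_0$ with both coordinates of $v_0$ odd, I carry out a nested 3-slab reduction: partition $G_0$ at position $0$, pair all columns except $\{u_0-1,u_0,u_0+1\}$ via cross edges to reduce to $P_3\Box P_{k_1}$ with target $(1,u_1)$; apply the same reduction to rows in position~$1$ to reach $P_3\Box P_3$ with target $(1,1)$; finish with the explicit outer-cycle matching $\{(0,0)(0,1),(0,2)(1,2),(2,2)(2,1),(2,0)(1,0)\}$.

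Finally I glue $M_{G_0}$ and $M_{G'}$ by the standard product construction. For each $g'\in V(G')$, install a copy of $M_{G_0}$ on the fiber $V(G_0)\times\{g'\}$; this covers the fiber except the vertex $(v_0,g')$. Then install a copy of $M_{G'}$ on the slab $\{v_0\}\times V(G')$; this covers the slab except $(v_0,v')=u$. No edge of the first family shares a vertex with any edge of the second, because $v_0$ is the only $G_0$-vertex uncovered by $M_{G_0}$, so the union is a matching of $G$ missing exactly $u$. The main obstacle is the all-odd case, and within it the 2-grid sub-lemma; once that base is handled, the product construction glues the two almost perfect matchings cleanly.
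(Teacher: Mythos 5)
Your proof is correct, but it takes a genuinely different route from the paper's. The paper inducts on the number $n_o$ of odd coordinates of $u$: assuming $u_0,u_1$ odd, it partitions $G$ at position $0$ and then $H=G_0[u_0]$ at position $1$, applies the inductive hypothesis to the central $(n-2)$-grid $H_1[u_1]$, covers the neighbouring slabs $H_1[u_1-1]$, $H_1[u_1+1]$, $G_0[u_0-1]$, $G_0[u_0+1]$ by almost perfect matchings from Lemma~\ref{lem Alleven apm} that miss suitably chosen all-even vertices $v,w,v^-,w^+$, and stitches everything together with the two connector edges $vv^-$ and $ww^+$ plus cross-edge matchings on the remaining slabs. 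You instead induct on $n$, dispose of the case of an even coordinate by peeling off one position (the same recursion as in Lemma~\ref{lem Alleven apm}), and in the all-odd case factor $G=(P_{k_0}\Box P_{k_1})\Box G'$, solve the $2$-grid explicitly by the nested three-slab reduction down to $P_3\Box P_3$ with its boundary $8$-cycle matching, and then glue an almost perfect matching of the $2$-grid missing $(u_0,u_1)$ with one of $G'$ missing $(u_2,\ldots,u_{n-1})$ via the standard product construction. Both arguments remove two odd coordinates at a time (the parity checks in your reduction are all sound, since $u_0,u_1$ odd and $k_0,k_1$ odd make the outer blocks of columns and rows even in number); your product-gluing lemma makes the bookkeeping more modular and avoids the connector-edge construction, while the paper's version avoids the explicit $3\times 3$ gadget by reusing Lemma~\ref{lem Alleven apm} on adjacent slabs. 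One small presentational point: your induction invokes the hypothesis for $n-2$, so phrase it as strong induction on $n$ (any value below $n$), and note, as you do, that for $n=2$ the factor $G'$ degenerates to a single vertex so the construction is just the $2$-grid gadget itself.
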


\begin{proof}
Suppose $u = (u_{0},u_{1},\ldots,u_{n-1})\in V_e(G)$ and let $n_{o}$ be the number of odd numbers in $u_{0}$, $u_{1}$, $\ldots$, $u_{n-1}$. Then $n_{o}\leqslant n$. We will prove this lemma by induction on $n_{o}$. As $\sum\limits_{i=0}^{n-1}u_{i}$ is even, so $n_{o}$ is even. The base case that the number of odd numbers is zero, holds by Lemma~\ref{lem Alleven apm}. So we suppose that $n_{o}\geqslant 2$. Without loss of generality, we may assume that the numbers $u_{0},u_{1},\ldots,u_{n_{o}-1}$ are odd and the numbers $u_{n_{o}},\ldots,u_{n-1}$ are even.

Now $G$ has a partition at position $0$: $G=G_{0}[0]\cup G_{0}[1]\cup\cdots\cup G_{0}[u_{0}-1]\cup G_{0}[u_{0}]\cup G_{0}[u_{0}+1]\cup\cdots\cup G_{0}[k_{0}-1]\cup E_{0}(G)$. Then $u\in V(G_{0}[u_{0}])$ and $M_{0}=E_{0}^{0,1}(G)\cup E_{0}^{2,3}(G)\cup\cdots\cup E_{0}^{u_{0}-3,u_{0}-2}(G)\cup E_{0}^{u_{0}+2,u_{0}+3}(G)\cup\cdots\cup E_{0}^{k_{0}-2,k_{0}-1}(G)$ is a matching of $G$ such that $M_0$ uncovers subgraphs $G_{0}[u_{0}-1]$, $G_{0}[u_{0}]$ and $G_{0}[u_{0}+1]$ of $G$. These three subgraphs are shown in Figure~\ref{fig: Partition at position 0}.

\newsavebox{\partitionatpositionzero}
\savebox{\partitionatpositionzero}{
\setlength{\unitlength}{1em}
\begin{picture}(15,10)\label{pic partitionatpositionzero}

\put(2.5,5){\oval(3,6)}
\put(7.5,5){\oval(3,6)}
\put(12.5,5){\oval(3,6)}

\put(2.5,3){\circle*{0.3}}
\put(7.5,3){\circle*{0.3}}
\put(7.5,5){\circle*{0.3}}
\put(7.5,7){\circle*{0.3}}
\put(12.5,7){\circle*{0.3}}

\put(2.5,3){\line(1,0){5}}
\put(7.5,7){\line(1,0){5}}

\put(2,3.5){$v^-$}
\put(8,2.75){$v$}
\put(6.5,4.75){$u$}
\put(6.5,6.75){$w$}
\put(12,6){$w^+$}

\put(0,0.5){$G_0[u_0-1]$}
\put(5,0.5){$H=G_0[u_0]$}
\put(10.5,0.5){$G_0[u_0+1]$}

\put(1.5,8.5){$M_0^-$}
\put(6.5,8.5){$M_0^S$}
\put(11.5,8.5){$M_0^+$}
\end{picture}}

\newsavebox{\partitionatpositionone}
\savebox{\partitionatpositionone}{
\setlength{\unitlength}{1em}
\begin{picture}(15,10)\label{pic partitionatpositionone}

\put(2.5,5){\oval(3,6)}
\put(7.5,5){\oval(3,6)}
\put(12.5,5){\oval(3,6)}

\put(2.5,7){\circle*{0.3}}
\put(7.5,5){\circle*{0.3}}
\put(7.5,7){\circle*{0.3}}
\put(12.5,7){\circle*{0.3}}

\put(2.5,7){\line(1,0){5}}
\put(7.5,7){\line(1,0){5}}

\put(2,6){$v$}
\put(7,4){$u$}
\put(12,6){$w$}

\put(0,0.5){$H_1[u_1-1]$}
\put(6.25,0.5){$H_1[u_1]$}
\put(10.5,0.5){$H_1[u_1+1]$}

\put(1.5,8.5){$M_1^-$}
\put(6.5,8.5){$M_{01}$}
\put(11.5,8.5){$M_1^+$}
\end{picture}}

\begin{figure}[ht]
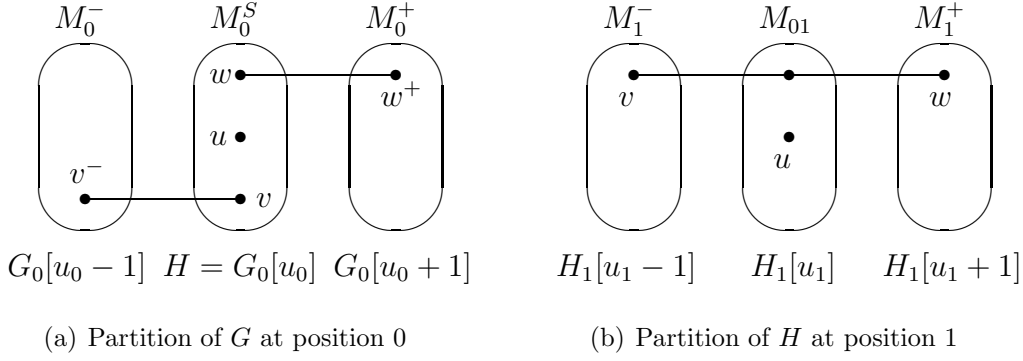

\centering
\subfigure[Partition of $G$ at position $0$]{
\label{fig: Partition at position 0}
\usebox{\partitionatpositionzero}}
\hspace{1em}
\subfigure[Partition of $H$ at position $1$]{
\label{fig: Partition at position 1}
\usebox{\partitionatpositionone}}
\caption{Constructions of  matching $M_0^S$ and  almost perfect matching $M_u$.}
\label{fig: Constructions of the matching M_0^S and the almost perfect matching M_u}
\end{figure}

Let $H=G_{0}[u_{0}]$. So $H=\{u_0\}\Box P_{k_1}\Box P_{k_2}\Box\cdots\Box P_{k_{n-1}}$ is an $(n-1)$-grid graph and $H$ has a partition at position $1$: $H=H_1[0]\cup H_1[1]\cup\cdots \cup H_1[u_{1}-1]\cup H_1[u_{1}]\cup H_1[u_{1}+1]\cup\cdots\cup H_1[k_{1}-1]\cup E_{1}(H)$. Then $u\in V(H_1[u_{1}])$ and $M_{1}=E_{1}^{0,1}(H)\cup E_{1}^{2,3}(H)\cup\cdots\cup E_{1}^{u_{1}-3,u_{1}-2}(H)\cup E_{1}^{u_{1}+2,u_{1}+3}(H)\cup \cdots\cup E_{1}^{k_{1}-2,k_{1}-1}(H)$ is a matching of $H$ such that $M_1$ uncovers subgraphs $H_1[u_{1}-1]$, $H_1[u_{1}]$ and $H_1[u_{1}+1]$ of $H$. These three subgraphs are shown in Figure~\ref{fig: Partition at position 1}.

Now $H_1[u_{1}]=\{u_0\}\Box \{u_1\}\Box P_{k_2}\Box\cdots\Box P_{k_{n-1}}$ is an $(n-2)$-grid graph. The number of odd numbers in $u_{2},\ldots$, $u_{n-1}$ is $n_o-2$ and $\sum\limits_{i=2}^{n-1}u_i$ is even. So $u\in V_e(H_1[u_{1}])$. By the induction hypothesis, we can find an almost perfect matching $M_{01}$ of $H_1[u_{1}]$ such that $M_{01}$ uncovers $u$. Let $v=(u_{0},u_{1}-1,v_{2},\ldots,v_{n-1})\in V(H)$ and let $w=(u_{0},u_{1}+1,v_{2},\ldots,v_{n-1})\in V(H)$ such that $v_{i}$ is even for each $i\in\{2,\ldots,n-1\}$. Then $v\in V_{allEven}\biggl(H_1[u_1-1]\biggr)$ and $w\in V_{allEven}\biggl(H_1[u_1+1]\biggr)$. Then by Lemma~\ref{lem Alleven apm}, we can find an almost perfect matching $M_{1}^{-}$ of $H_1[u_{1}-1]$ and an almost perfect matching $M_{1}^{+}$ of $H_1[u_{1}+1]$ such that $M_1^-$ uncovers $v$ and $M_1^+$ uncovers $w$. Let $S=\{u,v,w\}$. Then $M_0^S=M_1\cup M_1^-\cup M_{01}\cup M_1^+$ is a matching of $H=G_{0}[u_{0}]$ such that $M_0^S$ uncovers $S$.

We go back to the partition of $G$ at position $0$. Let $v^{-}=(u_{0}-1,u_{1}-1,v_{2},\ldots,v_{n-1})\in V(G)$ be the corresponding vertex of $v\in V(G_0[u_0])$ in $G_0[u_0-1]$ and let $w^{+}=(u_{0}+1,u_{1}+1,v_{2},\ldots,v_{n-1})\in V(G)$ be the corresponding vertex of $w\in V(G_0[u_0])$ in $G_0[u_0+1]$. Then $v^{-}\in V_{allEven}\biggl(G_0[u_0-1]\biggr)$ and $w^{+}\in V_{allEven}\biggl(G_0[u_0+1]\biggr)$. By Lemma~\ref{lem Alleven apm}, we can find an almost perfect matching $M_{0}^{-}$ of $G_{0}[u_{0}-1]$ and an almost perfect matching $M_{0}^{+}$ of $G_{0}[u_{0}+1]$ such that $M_0^-$ uncovers $v^{-}$ and $M_0^+$ uncovers $w^{+}$.

Hence $M_u=M_{0}\cup M_{0}^{-}\cup M_0^S\cup M_{0}^{+}\cup \{vv^{-},ww^{+}\}$ is an almost perfect matching of $G$ which uncovers $u$.
\end{proof}

\begin{lemma}\label{lem Bipartite partition}
Let $G$ be an odd order $(k_0,k_1,\ldots,k_{n-1};n)$-grid graph. Then $G$ has a bipartite partition $V(G)=V_{e}(G)\cup V_{o}(G)$, $V_{e}(G)\cap V_{o}(G)=\emptyset$ and $|V_{e}(G)|=|V_{o}(G)|+1$. If $u\in V_{o}(G)$, then $G-u$ has no perfect matching.
\end{lemma}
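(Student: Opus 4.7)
My plan is to handle the three assertions in order, all via the parity of the coordinate sum.

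First, I will observe that $V_e(G)$ and $V_o(G)$ clearly partition $V(G)$ since the parity of $\sum_{i=0}^{n-1} u_i$ is well-defined. To see this is a \emph{bipartite} partition, I note that by the adjacency rule of Subsection~\ref{subsection Structure of the $n$-grid graph}, any edge $uv\in E(G)$ satisfies $|u_{i_0}-v_{i_0}|=1$ for a unique position $i_0$ and $u_j=v_j$ elsewhere. Therefore $\sum u_i$ and $\sum v_i$ differ by $1$ and have opposite parities, so $u$ and $v$ lie in different parts. This proves the first assertion.

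Second, to count I will use a sign argument. Since $n(G)=\prod_{i=0}^{n-1}k_i$ is odd, every $k_i$ is odd, and $\{0,1,\ldots,k_i-1\}$ contains $(k_i+1)/2$ even numbers and $(k_i-1)/2$ odd numbers, so
\begin{equation*}
\sum_{u_i=0}^{k_i-1}(-1)^{u_i}=\tfrac{k_i+1}{2}-\tfrac{k_i-1}{2}=1.
\end{equation*}
Hence
\begin{equation*}
|V_e(G)|-|V_o(G)|=\sum_{u\in V(G)}(-1)^{\sum_{i}u_i}=\prod_{i=0}^{n-1}\sum_{u_i=0}^{k_i-1}(-1)^{u_i}=1.
\end{equation*}
Combined with $|V_e(G)|+|V_o(G)|=n(G)$, this gives the second assertion.

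Finally, for the third assertion, let $u\in V_o(G)$. Since the partition in the first part is bipartite, $G-u$ is also bipartite, with parts $V_e(G-u)=V_e(G)$ and $V_o(G-u)=V_o(G)\setminus\{u\}$ of sizes $|V_o(G)|+1$ and $|V_o(G)|-1$ respectively. Any matching of $G-u$ has size at most $\min\{|V_o(G)|+1,|V_o(G)|-1\}=|V_o(G)|-1$ and therefore covers at most $2|V_o(G)|-2$ vertices, whereas $|V(G-u)|=2|V_o(G)|$. So at least two vertices of $G-u$ are uncovered, and $G-u$ has no perfect matching. I do not foresee any genuine obstacle here; the only care needed is to verify the sign/counting identity, which is routine once the parity-preserving structure of the Cartesian product is in hand.
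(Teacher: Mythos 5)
Your proof is correct, and it differs from the paper's in one substantive place: the cardinality count $|V_e(G)|=|V_o(G)|+1$. The paper derives this from Lemma~\ref{lem Sumeven apm}: for $u\in V_e(G)$ it takes an almost perfect matching $M_u$ uncovering $u$, notes every edge of $M_u$ joins $V_e(G)$ to $V_o(G)$, and reads off $|V_e(G)|=|M_u|+1$, $|V_o(G)|=|M_u|$. You instead evaluate $|V_e(G)|-|V_o(G)|=\sum_{u\in V(G)}(-1)^{\sum_i u_i}=\prod_{i=0}^{n-1}\sum_{u_i=0}^{k_i-1}(-1)^{u_i}=1$, using only that each $k_i$ is odd. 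Your route is more elementary and self-contained: it does not rely on the inductive construction behind Lemma~\ref{lem Sumeven apm}, and the same product identity immediately tells you the imbalance of the bipartition for any grid graph (e.g.\ it is $0$ as soon as some $k_i$ is even), which is a small bonus in generality. The paper's route costs nothing extra in context, since Lemma~\ref{lem Sumeven apm} is needed later anyway (it is exactly what shows edge sets at vertices of $V_e(G)$ are \emph{not} matching preclusion sets), and it keeps the argument uniform in matching-theoretic terms. Your first part (bipartiteness from the adjacency rule) and third part (a perfect matching of $G-u$ would have to cover $2|V_o(G)|$ vertices, but every matching of the bipartite graph $G-u$ has size at most $|V_o(G)|-1$) are the same counting contradiction as the paper's, just phrased via maximum matching size rather than by comparing $|V_e(G)|$ and $|V_o(G)|$ directly; both are sound.
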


\begin{proof}
Let $V_{e}=V_{e}(G)$ and $V_{o}=V_{o}(G)$. For any vertex $v=(v_{0},v_{1},\ldots,v_{n-1})\in V(G)$, the number $\sum\limits_{i=0}^{n-1}v_{i}$ is either even or odd, but it cannot be both even and odd at the same time. Hence $V_{e}\cup V_{o}= V(G)$ and $V_{e}\cap V_{o}=\emptyset$. Let $u,v\in V_{e}$(or $V_{o}$), then $uv\notin E(G)$ obviously. Then $G$ is a bipartite graph with bipartition $(V_e,V_o)$.

Assume $u\in V_{e}$. By Lemma~\ref{lem Sumeven apm}, we can find an almost perfect matching $M_{u}$ which uncovers $u$. For any edge $f\in M_{u}$, one of the endpoints of $f$ must belong to $V_{e}$ and the other belongs to $V_{o}$. As $M_{u}$ is an almost perfect matching and it uncovers a vertex in $V_e$, then we have $|V_{e}|=|M_{u}|+1$ and $|V_{o}|=|M_{u}|$. Hence $|V_{e}|=|V_{o}|+1$.

Suppose $u\in V_{o}(G)$ and $G-u$ has a perfect matching $M$. Then $|V_{e}|=|M|$ and $|V_{o}|=|M|+1$. So $|V_{e}|+1=|V_{o}|$. This is a contradiction.
\end{proof}


\section{Matching preclusion for even order $n$-grid graphs}\label{section Matching preclusion of even order $n$-grid graphs}

We first give the matching preclusion number of even order $n$-grid graphs.

\begin{lemma}\label{lem mp number even order}
Let $G$ be an even order $(k_0,k_1,\ldots,k_{n-1};n)$-grid graph. Then $mp(G)=n$.
\end{lemma}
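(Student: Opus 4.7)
The upper bound $mp(G)\leqslant n$ is immediate from Proposition~\ref{pro even order mp number upper bound}, since $\delta(G)=n$ as recorded in Subsection~\ref{subsection Structure of the $n$-grid graph}. All the work therefore goes into the lower bound, namely showing that for every $F\subseteq E(G)$ with $|F|\leqslant n-1$ the subgraph $G-F$ still contains a perfect matching. I will argue by induction on $n$. The base case $n=1$ is trivial: $G=P_{k_0}$ is a path of even order, $F$ is forced to be empty, and the unique perfect matching of $P_{k_0}$ survives.

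For the inductive step $n\geqslant 2$, pick a position $d$ with $k_d$ even (which exists since $n(G)$ is even) and work with the canonical perfect matching $M_d=E_d^{0,1}(G)\cup E_d^{2,3}(G)\cup\cdots\cup E_d^{k_d-2,k_d-1}(G)$ from Subsection~\ref{subsection Structure of the $n$-grid graph}. If $F\cap M_d=\emptyset$ there is nothing to do. Otherwise the plan is to reroute $M_d$ one bad edge at a time using $(f;4)$-cycles: for each $f\in F\cap M_d$, any $(f,d,j;4)$-cycle $C$ satisfies $C\cap M_d=\{f,u'v'\}$ and has its two transverse edges $uu',v'v$ outside $M_d$, so $M_d\,\Delta\,E(C)$ is again a perfect matching of $G$ that no longer contains $f$ (and conveniently also loses $u'v'$ from the matching, which is helpful if $u'v'\in F$ as well). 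The counting is favourable: for fixed $f$ there are $d_{G_d[j]}(u)\geqslant n-1$ candidate cycles, indexed by the choice of a neighbour $u'$ of $u$ inside the slice $G_d[j]$, and distinct $u'$ yield disjoint transverse edges, so the at most $|F|-1\leqslant n-2$ remaining edges of $F$ can block at most $n-2$ of these candidates, leaving at least one usable cycle. Performing the rerouting and iterating on the updated matching strictly decreases the number of bad edges, so the process terminates with a perfect matching of $G-F$.

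The main obstacle I expect is the compatibility of successive reroutings when several bad edges sit in the same block $E_d^{j,j+1}(G)$ and their candidate $(f;4)$-cycles share either transverse edges or the slice-interior vertices $u',v'$; this must be handled by processing the edges in a careful order and checking that a freshly chosen cycle is still valid against the \emph{current} matching rather than against the original $M_d$. This subtlety is presumably the reason the authors announce that they will first handle $2$-grid graphs (where the slices are single paths and the geometry is two-dimensional) and only then deduce the $n\geqslant 3$ case, and I would follow the same division of labour: treat $n=2$ directly via the path structure of the slices, and for $n\geqslant 3$ use the $(f;4)$-cycle rerouting above, falling back on the $2$-grid result whenever the residual obstructions reduce to a two-dimensional configuration.
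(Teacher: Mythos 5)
Your upper bound and base case are fine, but the heart of your lower-bound argument --- iterated rerouting along $(f;4)$-cycles --- has a genuine gap that you name yourself and then do not close. The count ``at least $n-1$ candidate cycles versus at most $n-2$ blocking fault edges'' is only valid for the \emph{first} swap, because it relies on the current matching containing all of $E_d^{j,j+1}(G)$. After you replace $M_d$ by $M_d\,\Delta\,C$, the new matching is missing two edges of the block ($f$ and $u'v'$) and contains two transverse edges instead; for a later fault edge $f_2=xy$ in the same block $E_d^{j,j+1}(G)$, every candidate $4$-cycle whose fourth side is one of the deleted block edges is now invalid, so each earlier swap can destroy up to two of $f_2$'s candidates. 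In the worst case all $n-1$ fault edges lie in $M_d$ inside one block, and when you reach the last one the earlier swaps may have killed up to $2(n-2)$ of its at least $n-1$ candidates, which is not enough for $n\geqslant 3$. Rescuing the scheme needs a real additional argument (e.g.\ choosing all the nice cycles simultaneously and vertex-disjointly via a Hall-type selection inside each block, exploiting that the grid is triangle-free), and ``processing the edges in a careful order'' plus ``fall back on the $2$-grid case'' is not that argument. Incidentally, the paper's split into $2$-grid and $n\geqslant 3$ concerns the classification of optimal preclusion sets (super matchedness), not this lemma.

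For contrast, the paper proves the lower bound without any rerouting: it partitions $G$ at a position chosen so that every slice $G_0[j]$ is an \emph{even-order} $(n-1)$-grid graph (possible since $n\geqslant 2$ and some $k_i$ is even). If all of $F$ lies in a single slice, a perfect matching is assembled from cross-block edge sets $E_0^{i,i+1}(G)$ together with a perfect matching of one untouched end slice; otherwise every slice meets at most $n-2$ fault edges, so by induction each $G_0[j]-F_j$ has a perfect matching and their union works. You may want to redo your inductive step along these lines, or else supply the missing simultaneous-selection argument for the $(F,M_d)$-nice cycles.
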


\begin{proof}
Since $\delta (G)=n$, we have $mp(G)\leqslant n$ by Lemma~\ref{pro even order mp number upper bound}. By induction on $n$, we will show that $mp(G)\geqslant n$, i.e. for any edge set $F\subseteq E(G)$ with $|F|\leqslant n-1$, $G-F$ has a perfect matching. Let $F\subseteq E(G)$ with $|F|\leqslant n-1$. If $n=1$, then $F=\emptyset$ and $G$, a path on even number of vertices, has a perfect matching. Now suppose $n\geqslant 2$. Without loss of generality, we may suppose that $n(G)/k_0$ is even. Then $G_0[j]$ is an even order $(n-1)$-grid graph for $0\leqslant j\leqslant k_0-1$. There are two cases.

First suppose $F\subseteq E(G_0[j])$ for some $0\leqslant j\leqslant k_0-1$. If $k_0$ is even, then $M_0=E_{0}^{0,1}(G)\cup E_{0}^{2,3}(G)\cup \cdots\cup E_{0}^{k_{0}-2,k_{0}-1}(G)$ is a perfect matching of $G-F$. When $k_0$ is odd, we consider two possibilities for $j$. If $j\neq 0$, then $M_1\cup M^0$ is a perfect matching of $G-F$ where $M_1=E_{0}^{1,2}(G)\cup E_{0}^{3,4}(G)\cup \cdots\cup E_{0}^{k_{0}-2,k_{0}-1}(G)$ and $M^0$ is a perfect matching of $G_0[0]$. If $j\neq k_0-1$, then $M_2\cup M^{k_0-1}$ is a perfect matching of $G-F$ where $M_2=E_{0}^{0,1}(G)\cup E_{0}^{2,3}(G)\cup \cdots\cup E_{0}^{k_{0}-3,k_{0}-2}(G)$ and $M^{k_0-1}$ is a perfect matching of $G_0[k_0-1]$.

The second case is that $F\nsubseteq E(G_0[j])$ for any $0\leqslant j\leqslant k_0-1$. For $0\leqslant j\leqslant k_0-1$, let $F_j=F\cap E(G_0[j])$. Then $|F_j|\leqslant n-2$. By induction hypothesis, we have $mp(G_0[j])\geqslant n-1 > |F_j|$. Then $G_0[j]-F_j$ has a perfect matching $M[j]$. Hence $M[0]\cup M[1]\cup \cdots \cup M[k_0-1]$ is a perfect matching of $G-F$.
\end{proof}

In the following, we analyze the structure of optimal matching preclusion sets of even order $n$-grid graphs. Let $G$ be an even order $(k_0,k_1,\ldots,k_{n-1};n)$-grid graph. For $n=1$, $G$ is a path $P_{k}$ of length $k-1$ where $k=k_{0}$ is even, and $mp(G)=1$. Then for any optimal matching preclusion set $F\subseteq E(G)$, $P_{k}-F$ is isomorphic to the disjoint union $P_{s}\cup P_{t}$ of two paths $P_{s}$ and $P_{t}$ for some positive odd numbers $s$ and $t$ with $s+t=k$. The structure of optimal matching preclusion sets of $G$ are in Subsection~\ref{subsection $2$-Grid graphs} if $n=2$, and in Subsection~\ref{subsection $n$-Grid graph for n geqslant 3} if $n\geqslant 3$.

\subsection{Even order $2$-grid graphs}\label{subsection $2$-Grid graphs}

\begin{lemma}\label{lem 2-grid PM set}
Let $G$ be an even order $(k_0,k_1;2)$-grid graph where $k_0\geqslant 2$ is even. Let $F$ be an optimal matching preclusion set of $G$. Then either $F$ is trivial or $k_1=3$ and the endpoints of edges of $F$ are $(u_0,0)$, $(u_0+1,0)$, $(u_0,2)$ and $(u_0+1,2)$ for some even number $u_0\in \{0, 2,\ldots, k_0-2\}$.
\end{lemma}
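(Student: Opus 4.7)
The plan is to assume $F = \{f_1, f_2\}$ (with $|F|=2$ by Lemma~\ref{lem mp number even order}) is a non-trivial optimal matching preclusion set of $G$ and deduce that $k_1 = 3$ with $F$ of the stated form. Since $k_0$ is even, $M_0 = E_0^{0,1}(G) \cup E_0^{2,3}(G) \cup \cdots \cup E_0^{k_0-2,k_0-1}(G)$ is a perfect matching of $G$, so $F \cap M_0 \neq \emptyset$; likewise if $k_1$ is even then $M_1$ is a perfect matching and $F \cap M_1 \neq \emptyset$. The main tool throughout is the $(f;4)$-cycle swap from the end of Subsection~\ref{subsection Structure of the $n$-grid graph}: for any $f \in M_0$ and any $(f;4)$-cycle $C$, the symmetric difference $M_0 \Delta C$ is a perfect matching of $G$ with $f \notin M_0 \Delta C$.

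I would first rule out the case $k_1$ even. In that case $F$ contains exactly one edge $f_1 \in M_0$ (at position $0$, in a column pair $E_0^{2a,2a+1}$) and one edge $f_2 \in M_1$ (at position $1$, in a row pair $E_1^{2e,2e+1}$). Enumerating all $(f_1;4)$- and $(f_2;4)$-cycles, at most one cycle in each direction is ``bad'' (adds $f_2$ or $f_1$ respectively), and a short count shows that if neither a good $(f_1;4)$-cycle nor a good $(f_2;4)$-cycle exists, then $f_1$ and $f_2$ must share a common endpoint of degree $2$, i.e., a corner of $G$, making $F$ trivial -- a contradiction. Hence $k_1$ is odd, and at least one edge of $F$, say $f_1$, lies in $M_0$.

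For $k_1$ odd I would case-split on $f_2$. If $f_2$ is at position $0$ and lies in a column pair distinct from that of $f_1$, independent $(f_1;4)$- and $(f_2;4)$-swaps yield a perfect matching of $G-F$. If $f_2$ is at position $0$ but outside $M_0$, then a single $(f_1;4)$-cycle swap produces a perfect matching of $G-F$ (since $f_2 \notin M_0 \cup E(C)$). If $f_2$ is at position $1$, the $(f_1;4)$-swap fails only when $f_1, f_2$ share an endpoint on a boundary row; if that shared vertex has degree $2$ then $F$ is trivial, while if it has degree $3$ one may force the matching at this vertex to its unique remaining neighbor and close up the matching via an $M_0$-augmenting path through the adjacent row to obtain a perfect matching of $G-F$. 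The only remaining case is $f_1, f_2 \in M_0$, both in the same column pair $E_0^{2a,2a+1}$, at rows $b_1 < b_2$.

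This last case produces the claimed exception and is the main obstacle. If $b_2 = b_1 + 1$, the $(f_1;4)$-cycle with $b' = b_2$ satisfies $u'v' = f_2$, so $M_0 \Delta C$ removes both edges, yielding a perfect matching of $G-F$; hence $b_2 - b_1 \geq 2$. Any perfect matching of $G-F$ restricts on columns $2a, 2a+1$ to a perfect matching of the $2 \times k_1$ sub-ladder with the two rungs $f_1, f_2$ removed, while on every other column pair $M_0$ already provides the matching. A direct analysis of this sub-ladder -- noting that a missing rung at a boundary row forces both vertical edges adjacent to that row, and then checking by induction on the remaining central portion -- shows that the sub-ladder admits a perfect matching in every case except $k_1 = 3$ with $\{b_1, b_2\} = \{0, 2\}$: there the verticals forced at row $0$ and at row $2$ both demand the row-$1$ vertices, producing an unavoidable conflict. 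Setting $u_0 = 2a$, which is automatically even, yields the stated form of $F$.
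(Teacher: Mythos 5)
Your argument is correct, and in its hardest steps it takes a genuinely different, more local route than the paper. The paper splits on $|F\cap M_0|\in\{1,2\}$ and, after the same $(f;4)$-cycle swap reductions you use, finishes the residual configurations with global perfect matchings built from the boundary of the grid: the matchings $M_F^0\subset E_{0,k_1-1}$ and $M_F^{k_1-1}\subset E_{0,0}$ coming from the long path along columns $0$, $k_0-1$ and an extreme row, plus the row matchings $M_e$ (for $k_1$ even) and $M_o$ (for $k_1\geqslant 5$). You instead split on the parity of $k_1$ and on the type of $f_2$, and finish locally: your shared-endpoint degree-$3$ case is settled by what is concretely an $M_0$-alternating $8$-cycle through $f_1$ lying in the boundary row, the adjacent row and four consecutive columns (it avoids $f_2$, so the swap gives a perfect matching of $G-F$; this replaces the paper's $M_F^0$ construction), and your two-faults-in-one-column-pair case reduces to dominoes on the $2\times k_1$ ladder minus two rungs, which indeed has a perfect matching unless $k_1=3$ and the rungs sit at rows $0$ and $2$ (place a rung at row $0$, at row $k_1-1$, or at row $2$ when $\{b_1,b_2\}=\{0,k_1-1\}$ and $k_1\geqslant5$). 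This buys a shorter, more elementary proof; the paper's boundary constructions are heavier but reusable in its later theorems. Two small points to fix when writing it up: the sentence ``any perfect matching of $G-F$ restricts to the sub-ladder'' is false as stated and also unneeded---you only use the sufficiency direction, namely that a perfect matching of the sub-ladder minus the two rungs together with $M_0$ on the other column pairs is a perfect matching of $G-F$ (the converse, that the exceptional $F$ really precludes, is Lemma~\ref{lem special 2-grid PM set} and not part of this statement); and the ``$M_0$-augmenting path'' phrase should be replaced by the explicit alternating $8$-cycle, since everything is matched and what you perform is a nice-cycle swap in the sense of Lemma~\ref{lem nice cycle}.
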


\begin{proof}
Let $F=\{f_0,f_1\}$ and let $M_0=E_0^{0,1}(G)\cup E_0^{2,3}(G)\cup\cdots\cup E_0^{k_0-2,k_0-1}(G)$. Then $F\cap M_0\neq \emptyset$. As if $F\cap M_0=\emptyset$, then $M_0$ is a perfect matching of $G-F$, which is a contradiction. Then $|F\cap M_0|=1$ or $2$.

We begin with $|F\cap M_0|=1$. We may assume that $f_0\in F\cap M_0$ and $f_1\not\in M_0$, then we first show that $f_0$ and $f_1$ have a common endpoint. If not, then there is one $(f_0;4)$-cycle $C_0$ such that $f_1\notin C_0$. So $C_0$ is an $M_0$-alternating cycle and $M_0\Delta C_0$ is a perfect matching of $G-F$ which is a contradiction. Now let $f_0=u_ju_{j+1}$, $u_j\in V(G_0[j])$ and $u_{j+1}\in V(G_0[j+1])$ where $j$ is an even number such that $0\leqslant j\leqslant k_0-2$. If $f_1\not\in E(G_0[j])$ and $f_1\not\in E(G_0[j+1])$, then $f_1\notin C_0$ and $M_0\Delta C_0$ is a perfect matching of $G-F$ which is a contradiction. So $f_1\in E(G_0[j])$ or $f_1\in E(G_0[j+1])$.

\newsavebox{\eightsixtwogrid}
\savebox{\eightsixtwogrid}{
\setlength{\unitlength}{1em}
\begin{picture}(16,12)\label{pic eightsixtwogrid}

\multiput(1,1)(2,0){8}{\line(0,1){10}}
\multiput(1,1)(0,2){6}{\line(1,0){14}}

\put(0.75,0){$0$}
\put(2.75,0){$1$}
\put(4.75,0){$2$}
\put(6.75,0){$3$}
\put(8.75,0){$4$}
\put(10.75,0){$5$}
\put(12.75,0){$6$}
\put(14.75,0){$7$}

\put(0.25,0.75){$0$}
\put(0.25,2.75){$1$}
\put(0.25,4.75){$2$}
\put(0.25,6.75){$3$}
\put(0.25,8.75){$4$}
\put(0.25,10.75){$5$}

\put(9,1){\circle*{0.3}}
\put(11,1){\circle*{0.3}}
\put(9,3){\circle*{0.3}}

\put(9.05,1.75){$f_1$}
\put(9.5,0.2){$f_0$}

\multiput(3.75,0.75)(0,2){5}{$\lozenge$}
\multiput(7.75,0.75)(0,2){5}{$\lozenge$}
\multiput(11.75,0.75)(0,2){5}{$\lozenge$}

\multiput(0.75,1.75)(0,4){3}{$\square$}
\multiput(14.5,1.75)(0,4){3}{$\square$}
\multiput(3.75,10.75)(4,0){3}{$\square$}
\end{picture}}

\newsavebox{\nontrivialoptimalmpset}
\savebox{\nontrivialoptimalmpset}{
\setlength{\unitlength}{1em}
\begin{picture}(12,12)\label{pic nontrivialoptimalmpset}

\multiput(1,7)(2,0){6}{\line(0,1){4}}
\multiput(1,7)(0,2){3}{\line(1,0){10}}
\put(0.75,6){$0$}
\put(2.75,6){$1$}
\put(4.75,6){$2$}
\put(6.75,6){$3$}
\put(8.75,6){$4$}
\put(10.75,6){$5$}
\put(0.25,6.75){$0$}
\put(0.25,8.75){$1$}
\put(0.25,10.75){$2$}
\put(5,7){\circle*{0.3}}
\put(7,7){\circle*{0.3}}
\put(5,11){\circle*{0.3}}
\put(7,11){\circle*{0.3}}
\put(5.5,10.15){$f_1$}
\put(5.5,7.35){$f_0$}

\multiput(1,1)(2,0){6}{\line(0,1){4}}
\multiput(1,1)(0,2){3}{\line(1,0){4}}
\multiput(7,1)(0,2){3}{\line(1,0){4}}
\put(5,3){\line(1,0){2}}
\put(0.75,0){$0$}
\put(2.75,0){$1$}
\put(4.75,0){$2$}
\put(6.75,0){$3$}
\put(8.75,0){$4$}
\put(10.75,0){$5$}
\put(0.25,0.75){$0$}
\put(0.25,2.75){$1$}
\put(0.25,4.75){$2$}
\put(5,1){\circle*{0.3}}
\put(7,1){\circle*{0.3}}
\put(5,5){\circle*{0.3}}
\put(7,5){\circle*{0.3}}
\end{picture}}

\begin{figure}[ht]
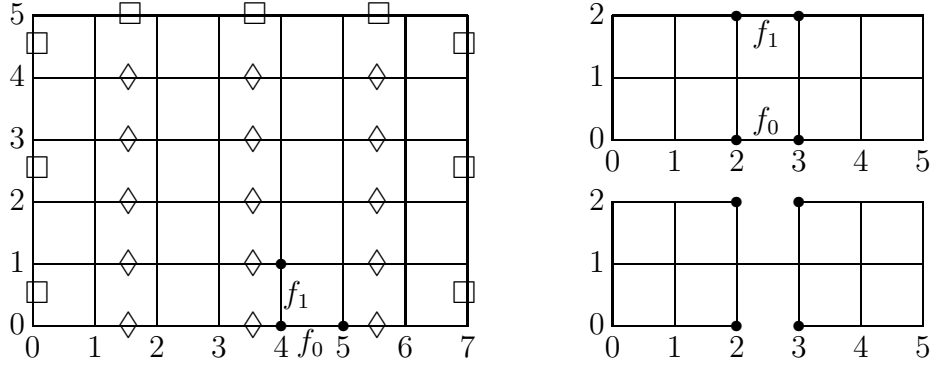

\centering
\subfigure[The $(8,6;2)$-grid graph $G$, and the perfect matching $M_F^0=M_0''\cup M_P$ of $G-F$ when $|F\cap M_0|=1$, where the edges with $\lozenge$ are in $M_0''$ and the edges with $\square$ are in $M_P$.]{
\label{fig: (86)2-grid}
\usebox{\eightsixtwogrid}}
\hspace{1em}
\subfigure[The $(6,3;2)$-grid graph $G$, one nontrivial optimal matching preclusion set $F=\{f_0,f_1\}$ and the resulting graph $G-F$.]{
\label{fig: (63)2-grid}
\usebox{\nontrivialoptimalmpset}}
\caption{Examples of the $(k_0,k_1;2)$-grid graph where $k_0\geqslant 2$ is even.}
\label{fig: Examples of the even order 2-grid graph}
\end{figure}

Suppose $f_1\in E(G_0[j])$. Then $f_1\not\in E(G_0[j+1])$ and $u_j$ is the common endpoint of $f_0$ and $f_1$. If $d_{G_0[j]}(u_j)=2$, then there is exactly one $(f_0;4)$-cycle $C_1$ such that $f_1\notin C_1$. So $M_0\Delta C_1$ is a perfect matching of $G-F$ which is a contradiction. So $d_{G_0[j]}(u_j)=1$ and $u_j$ is an isolated vertex of $G_0[j]-f_1$. Let $u_j=(j,h)$. Then $h=0$ or $k_1-1$. Now we assume $j\neq 0$. Then $k_0\geqslant 4$. Let $M_0'=E_0^{1,2}(G)\cup\cdots \cup E_0^{k_0-3,k_0-2}$. Then $F\cap M_0'=\emptyset$, $M_0\cap M_0'=\emptyset$ and the sets $M_0$ and $M_0'$ form a partition of $E_0(G)$. For each $0\leqslant j_0\leqslant k_0-1$, the subgraph $G_0[j_0]=\{j_0\}\Box P_{k_1}$, and for each $0\leqslant j_1\leqslant k_1-1$, the subgraph $G_1[j_1]=P_{k_0}\Box \{j_1\}$. If $h=0$, then $E_{0,k_1-1}=M_0' \cup E_{010}$ contains a perfect matching $M_F^0$ of $G-F$ where $E_{010}=E(\{0\}\Box P_{k_1})\cup E(P_{k_0}\Box \{k_1-1\})\cup E(\{k_0-1\}\Box P_{k_1})$. Next we construct $M_F^0$. Since $F\cap E_{0,k_1-1}=\emptyset$, any perfect matching contained in $E_{0,k_1-1}$ is a perfect matching of $G-F$. Let $V_{010}=V(\{0\}\Box P_{k_1})\cup V(P_{k_0}\Box \{k_1-1\})\cup V(\{k_0-1\}\Box P_{k_1})$. Then the induced subgraph of $G$ on the set $V_{010}$ is a path $P$ with $k_0+2(k_1-1)$ vertices. Here $k_0+2(k_1-1)$ is even, $V(P)=V_{010}$ and $E(P)=E_{010}$. The path $P$ has a perfect matching $M_P\subset E_{010}$. Let $M_0''=M_0'\setminus E(P_{k_0}\Box \{k_1-1\})$. Then $M_0''\subset M_0'$ is a matching of $G$ such that $M_0''$ uncovers $V_{010}$. Hence $M_F^0=M_0''\cup M_P\subset E_{0,k_1-1}$ is a perfect matching of $G-F$. One example of the construction of $M_F^0$ is in Figure~\ref{fig: (86)2-grid}. If $h=k_1-1$, then by a similar way as in the case $h=0$, we can construct a perfect matching $M_F^{k_1-1}$ of $G-F$ such that $M_F^{k_1-1}\subset E_{0,0}=M_0'\cup E(\{0\}\Box P_{k_1})\cup E(P_{k_0}\Box \{0\})\cup E(\{k_0-1\}\Box P_{k_1})$. Thus $j=0$. The vertex degree of $u_j$ is $d_G(u_j)=2$. Then $u_j$ is an isolated vertex of $G-F$, i.e. the optimal matching preclusion set $F$ is trivial.

By the same argument as in the last paragraph, if $f_1\in E(G_0[j+1])$, then $f_1\not\in E(G_0[j])$, $u_{j+1}$ is the common endpoint of $f_0$ and $f_1$, $j+1=k_0-1$, $d_G(u_{j+1})=2$ and $u_{j+1}$ is an isolated vertex of $G-F$, i.e. the optimal matching preclusion set $F$ is trivial.

Now suppose $|F\cap M_0|=2$. Let $x=(x_0,x_1)$ and $x^+=(x_0+1,x_1)$ be the endpoints of $f_0$, and let $y=(y_0,y_1)$ and $y^+=(y_0+1,y_1)$ be the endpoints of $f_1$. Then $x_0$ and $y_0$ are even. If $x_0\neq y_0$, then there exist a $(f_0;4)$-cycle $C'_0$ and a $(f_1;4)$-cycle $C'_1$ such that $C'_0\cap C_1'=\emptyset$. Thus $M_0\Delta C'_0\Delta C'_1$ is a perfect matching of $G-F$. This is a contradiction. So $x_0=y_0$ and $x_1\neq y_1$. If $k_1-1\notin \{x_1,y_1\}$, then by the same construction, $M_F^0\subset E_{0,k_1-1}$ (defined above) is a perfect matching of $G-F$. If $0\notin \{x_1,y_1\}$, then similarly, $M_F^{k_1-1}\subset E_{0,0}$ (defined above) is a perfect matching of $G-F$. Thus $\{x_1,y_1\}=\{0,k_1-1\}$. If $k_1\geqslant 2$ is even, then $M_e=E_1^{0,1}(G)\cup E_1^{2,3}(G)\cup\cdots\cup E_1^{k_1-2,k_1-1}(G)$ is a perfect matching of $G-F$. So $k_1\geqslant 3$ is odd. If $k_1\geqslant 5$, then $M_o'=E_1^{0,1}(G)\cup E_1^{2,3}(G)\cup \cdots \cup E_1^{k_1-5,k_1-4}(G)\cup E_1^{k_1-2,k_1-1}(G)$ is a matching of $G$ such that $M_o'$ uncovers $G_1[k_1-3]$, and $F\cap E(G_1[k_1-3])=\emptyset$. The subgraph $G_1[k_1-3]$ is a path with even number $k_0$ vertices. So $G_1[k_1-3]$ has a perfect matching $P_o$. Thus $M_o=M_o'\cup P_o$ is a perfect matching of $G-F$, which is a contradiction. So $k_1=3$. Now the endpoints of edges of $F$ are $(x_0,0)$, $(x_0+1,0)$, $(x_0,2)$ and $(x_0+1,2)$, where $0\leqslant x_0\leqslant k_0-2$, and $x_0$ is even. One example of such kind of optimal matching preclusion set is shown in Figure~\ref{fig: (63)2-grid}.
\end{proof}

\begin{lemma}\label{lem special 2-grid PM set}
Let $G$ be an even order $(k_0,3;2)$-grid graph where $k_0\geqslant 2$ is even. Let $F=\{x_0y_0,x_1y_1\}$ where $x_0=(u_0,0)$, $y_0=(u_0+1,0)$, $x_1=(u_0,2)$ and $y_1=(u_0+1,2)$ for some even number $u_0\in \{0, 2,\ldots, k_0-2\}$. Then $F$ is an optimal matching preclusion set of $G$.
\end{lemma}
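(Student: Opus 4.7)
The plan is to reduce the statement to showing that $G-F$ has no perfect matching. Since $n(G)=3k_0$ is even, no almost perfect matching can exist in $G-F$ on parity grounds, and by Lemma~\ref{lem mp number even order} we already know $mp(G)=n=2=|F|$, so once $F$ is shown to be a matching preclusion set it is automatically optimal.

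To see that $G-F$ has no perfect matching, I would partition $V(G)$ at position~$0$ into a left block $L=\bigcup_{j=0}^{u_0}V(G_0[j])$ and a right block $R=\bigcup_{j=u_0+1}^{k_0-1}V(G_0[j])$. The edges of $F$ are exactly two of the three edges at position~$0$ between columns $u_0$ and $u_0+1$, so in $G-F$ the only edge with one endpoint in $L$ and one in $R$ is the surviving middle edge $e=(u_0,1)(u_0+1,1)$. Since $u_0$ and $k_0$ are both even, $u_0+1$ and $k_0-u_0-1$ are odd, hence both $|L|=3(u_0+1)$ and $|R|=3(k_0-u_0-1)$ are odd.

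Now suppose for contradiction that $M$ is a perfect matching of $G-F$. If $e\notin M$, then $M$ restricts to a perfect matching of $G[L]$, which is impossible because $|L|$ is odd. If $e\in M$, then $M\setminus\{e\}$ is a perfect matching of $G[L]-(u_0,1)$. Here I would use the bipartition of $G$ by parity of coordinate sum (as in Lemma~\ref{lem Bipartite partition}, whose bipartition argument does not actually depend on the order of $G$). A short direct count gives $|L\cap V_e(G)|$ and $|L\cap V_o(G)|$ differing by exactly one, and since $(u_0,1)\in V_o(G)$ (because $u_0+1$ is odd), removing that vertex worsens the class imbalance to two, so no perfect matching of $G[L]-(u_0,1)$ can exist. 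Either alternative is ruled out, so $G-F$ admits no perfect matching.

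The whole proof is really the observation that $F$ pinches the cut between the first $u_0+1$ columns and the remaining columns down to the single bridge edge $e$, followed by a parity/bipartition count. The only step requiring any care is the case $e\in M$: there the odd-order block argument alone is no longer sufficient, and the bipartite-class imbalance in $L$ is what does the actual work.
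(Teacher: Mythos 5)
Your proof is correct and takes essentially the same route as the paper: the paper also observes that $F$ reduces the cut between columns $0,\ldots,u_0$ and $u_0+1,\ldots,k_0-1$ to the single bridge $(u_0,1)(u_0+1,1)$, and then rules out both cases ($e\notin M$ by the odd order of the left block, $e\in M$ by the fact that $(u_0,1)\in V_o$ of that odd-order $2$-grid, via Lemma~\ref{lem Bipartite partition}). Your explicit bipartite-class count is just an unwinding of that lemma, so there is nothing substantively different to flag.
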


\begin{proof}
The graph $H=G-F$ has a bridge $f=xy$ where $x=(u_0,1)$ and $y=(u_0+1,1)$. The connected components $H_1$ and $H_2$ of $H-f$ are odd order $2$-grid graphs where $x\in V(H_1)$ and $y\in V(H_2)$. Since $x\in V_{o}(H_1)$, there exists no almost perfect matching of $H_1$ which uncovers $x$ by Lemma~\ref{lem Bipartite partition}. Suppose $M$ is a perfect matching of $H$. Then $f\in M$. So $M_1=M\cap E(H_1)$ is an almost perfect matching of $H_1$ such that it uncovers $x$. This is a contradiction. Thus $H$ has no perfect matching.
\end{proof}

Let $G$ be a $(k_0,k_1;2)$-grid graph where $k_0\geqslant 2$ is even. Let $M_0=E_0^{0,1}(G)\cup E_0^{2,3}(G)\cup\cdots\cup E_0^{k_0-2,k_0-1}(G)$. Then $M_0$ is a perfect matching of $G$. Let $F=\{f_0,f_1\}\subset E(G)$. In the proof of Lemma~\ref{lem 2-grid PM set}, we use  $(f_0;4)$-cycle and  $(f_1;4)$-cycle frequently. When we consider an $(f;4)$-cycle $C$ for some $f\in F\cap M_0$, it follows that $C$ is an $M_0$-alternating cycle, $f\in C$ and for any $f'\in F\setminus \{f\}$, $f'\not\in C$, i.e. $\{f\}=C\cap F\neq \emptyset$, and $C\cap F\subseteq M_0\cap F$. Thus $M_0\Delta C$ is also a perfect matching of $G$, and it contains less edges in $F$ than that of $M_0$, i.e. $F\cap (M_0\Delta C)=(F\cap M_0)\setminus \{f\}$ is a proper subset of $F\cap M_0$. In general, we give the definition of the \emph{$(F,M)$-nice cycle} and some propositions of the nice cycle.

\begin{definition}[The $(F,M)$-nice cycle]\label{definition nice cycle}
Let $G$ be a graph. Let $F\subseteq E(G)$  and  $M$ a matching of $G$ such that $F\cap M\neq \emptyset$. An $M$-alternating cycle $C$ is called an $(F,M)$-nice cycle if $\emptyset\neq C\cap F\subseteq M$.
\end{definition}

\begin{lemma}\label{lem nice cycle}
Let $G$ be a graph, let $F\subseteq E(G)$, and let $M\subseteq E(G)$ be a matching of $G$ such that $F\cap M\neq \emptyset$. Let $C$ be an $(F,M)$-nice cycle. Then $|M\Delta C|=|M|$, $(M\Delta C)\cap F=(M\cap F)\setminus (C\cap F)$ and so $|(M\Delta C)\cap F|=|M\cap F|-|C\cap F|<|M\cap F|$.
\end{lemma}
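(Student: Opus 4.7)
The plan is a direct set-theoretic computation; the hypothesis $\emptyset \neq C\cap F \subseteq M$ does all the work, so no induction or structural argument is needed.

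First I would handle $|M\Delta C|=|M|$. Since $C$ is an $M$-alternating cycle, its edges alternate between $M$ and $E(G)\setminus M$, so $|C\cap M|=|C\setminus M|=|C|/2$. Expanding $M\Delta C = (M\setminus C)\cup (C\setminus M)$ as a disjoint union and using $|M\setminus C|=|M|-|M\cap C|$ gives $|M\Delta C|=|M|-|C|/2+|C|/2=|M|$.

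Next I would compute $(M\Delta C)\cap F$. Writing $M\Delta C = (M\setminus C)\cup(C\setminus M)$ and intersecting with $F$ yields the disjoint union $((M\setminus C)\cap F)\cup((C\setminus M)\cap F)$. The hypothesis $C\cap F\subseteq M$ tells us that any edge of $C$ lying in $F$ is automatically in $M$, so $(C\setminus M)\cap F=\emptyset$. For the remaining piece, an edge $e\in(M\setminus C)\cap F$ is characterized by $e\in M\cap F$ and $e\notin C$; because $e\in F$, the condition $e\notin C$ is equivalent to $e\notin C\cap F$. Hence $(M\setminus C)\cap F = (M\cap F)\setminus(C\cap F)$, giving the stated identity.

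Finally, since $C\cap F\subseteq M$ in fact forces $C\cap F\subseteq M\cap F$, the cardinality of $(M\cap F)\setminus(C\cap F)$ is exactly $|M\cap F|-|C\cap F|$. The hypothesis $F\cap M\neq\emptyset$ in the definition of an $(F,M)$-nice cycle, together with $C\cap F\neq\emptyset$ by definition, yields $|C\cap F|\geqslant 1$ and the strict inequality follows. The only minor obstacle is being careful that the various set operations really are disjoint in the places used, but once the inclusion $C\cap F\subseteq M$ is invoked the rest is formal.
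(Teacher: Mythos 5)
Your proposal is correct and follows essentially the same direct set-theoretic route as the paper: both arguments rest on the inclusion $C\cap F\subseteq M$ to identify $(M\Delta C)\cap F$ with $(M\cap F)\setminus(C\cap F)$ and use $C\cap F\neq\emptyset$ for the strict inequality. The only cosmetic difference is that you compute via the disjoint decomposition $M\Delta C=(M\setminus C)\cup(C\setminus M)$ and spell out why $|M\Delta C|=|M|$, whereas the paper checks the set equality by element-wise double inclusion and asserts the cardinality fact directly from $M$-alternation.
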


\begin{proof}
The cycle $C$ is $M$-alternating, then $|M\Delta C|=|M|$. Since $C\cap F\subseteq M\cap F$, we have $(M\Delta C)\cap F\subseteq (M\cup C)\cap F=(M\cap F)\cup (C\cap F)=M\cap F$. Suppose $f\in C\cap F$. Then $f\in C,F,M$. So $f\notin M\Delta C$ and $f\notin (M\Delta C)\cap F$. Hence $(M\Delta C)\cap F\subseteq (M\cap F)\setminus (C\cap F)$. Let $g\in (M\cap F)\setminus (C\cap F)$. Then $g\in M,F$ and $g\notin C$. So $g\in M\Delta C$ and $g\in (M\Delta C)\cap F$. Thus $(M\Delta C)\cap F\subseteq (M\Delta C)\cap F$.
\end{proof}

Let $G$ be a graph and let $F\subseteq E(G)$. Let $f\in F$ be an edge of $F$. Then we call $f$ an \emph{$F$-fault edge}. Let $g\in E(G)\setminus F$ be an edge of $G-F$. Then we call $g$ an \emph{$F$-good edge}. Let $M\subseteq E(G)$ be a matching of $G$ such that $F\cap M\neq \emptyset$. Let $C$ be an $(F,M)$-nice cycle. By Lemma~\ref{lem nice cycle}, the matching $M\Delta C$ of $G$ contains less $F$-fault edges than that of $M$.

Let $n\geqslant 2$ and let $G$ be a $(k_0,k_1,\ldots,k_{n-1};n)$-grid graph. Let $f=uv$ be an edge of $G$ and let $C=uvv'u'u$ be a $(f,d,j;4)$-cycle as defined at the end of Subsection~\ref{subsection Structure of the $n$-grid graph}. Let $M$ be a matching of $G$ such that $E_d^{j,j+1}(G)\subseteq M$, and let $F\subseteq E(G)$ such that $f\in C\cap F\subseteq M\cap F$. Then $C$ is an $(F,M)$-nice cycle. Here $C\cap F$ has one or two $F$-fault edges. Let $f'=u'v'$. If $f'\notin F$, then $C\cap F=\{f\}$. If $f'\in F$, then $C\cap F=\{f,f'\}$.

\begin{lemma}\label{lem disjoint nice cycles no F}
Let $G$ be a graph, let $F\subseteq E(G)$, and let $M\subseteq E(G)$ be a matching of $G$ such that $F\cap M\neq \emptyset$. Let $s\geqslant 1$ and let $C_1,C_2,\ldots,C_s$ be edge-disjoint $(F,M)$-nice cycles such that $F\cap M\subseteq \bigcup\limits_{i=1}^sC_i$. Let $M_\Delta=M\Delta C_1\Delta C_2\Delta \cdots \Delta C_s$. Then $|M_\Delta|=|M|$ and $M_\Delta\cap F=\emptyset$, i.e. $M_\Delta$ contains no $F$-fault edges. In particular, if $M$ is a perfect matching (or almost perfect matching, respectively), then $M_\Delta$ is a perfect matching (or almost perfect matching, respectively) of $G-F$.
\end{lemma}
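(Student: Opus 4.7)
The plan is a straightforward induction on $s$, anchored by the single nice cycle lemma (Lemma~\ref{lem nice cycle}), combined with a direct set-theoretic computation that exploits the pairwise edge-disjointness of $C_1,\dots,C_s$.

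First I would handle the matching property and the cardinality by induction on $i$. Set $M_0=M$ and $M_i=M_{i-1}\Delta C_i$ for $i=1,\dots,s$, so that $M_s=M_\Delta$. For the inductive step, observe that since $C_i$ is edge-disjoint from $C_1,\dots,C_{i-1}$, every edge of $C_i$ has the same membership in $M_{i-1}$ as it does in $M$; hence $M_{i-1}\cap C_i=M\cap C_i$, so $C_i$ is still $M_{i-1}$-alternating. Moreover $C_i\cap F\subseteq M\cap C_i=M_{i-1}\cap C_i\subseteq M_{i-1}$ and $C_i\cap F\neq\emptyset$, so $C_i$ is an $(F,M_{i-1})$-nice cycle. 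Applying Lemma~\ref{lem nice cycle} yields that $M_i$ is a matching with $|M_i|=|M_{i-1}|=|M|$.

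Next I would verify $M_\Delta\cap F=\emptyset$ by a direct set-theoretic computation. Because the $C_i$ are pairwise edge-disjoint, iterated symmetric difference collapses to disjoint union: $C_1\Delta\cdots\Delta C_s=C_1\cup\cdots\cup C_s$. Thus $M_\Delta=M\Delta(C_1\cup\cdots\cup C_s)$, and intersecting with $F$ together with the identity $(A\Delta B)\cap F=(A\cap F)\Delta(B\cap F)$ gives $M_\Delta\cap F=(M\cap F)\Delta\bigl(\bigcup_{i=1}^s(C_i\cap F)\bigr)$. Each $C_i\cap F\subseteq M\cap F$ by niceness, so $\bigcup_i(C_i\cap F)\subseteq M\cap F$; the covering hypothesis $F\cap M\subseteq\bigcup_iC_i$ supplies the reverse inclusion $M\cap F\subseteq\bigcup_i(C_i\cap F)$. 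Equality forces $M_\Delta\cap F=(M\cap F)\Delta(M\cap F)=\emptyset$.

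Finally, from $M_\Delta\cap F=\emptyset$ it follows that $M_\Delta\subseteq E(G-F)$, and combined with $|M_\Delta|=|M|$ this delivers the stated consequences for perfect and almost perfect matchings. The only mildly delicate point is the iterated alternation argument: one must check that each subsequent $C_i$ is still alternating with respect to the current matching $M_{i-1}$, and this is made painless by edge-disjointness, which guarantees that none of the earlier symmetric differences touches any edge of $C_i$. Everything else is routine bookkeeping with standard symmetric difference identities.
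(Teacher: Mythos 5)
Your proposal is correct and rests on the same two ingredients as the paper's proof: edge-disjointness collapses the iterated symmetric difference to $M\Delta\bigl(\bigcup_{i=1}^{s}C_i\bigr)$, and the niceness inclusion $C_i\cap F\subseteq M\cap F$ together with the covering hypothesis $M\cap F\subseteq\bigcup_i C_i$ cancels all $F$-fault edges. The paper packages this as an explicit partition $M_\Delta=(M\setminus C)\cup\bigcup_i(C_i\setminus M)$ followed by an element-chasing contradiction, while you use induction via Lemma~\ref{lem nice cycle} plus symmetric-difference identities; this is only a difference in bookkeeping, not in substance.
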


\begin{proof}
Since $C_1,C_2,\ldots,C_s$ are edge-disjoint $M$-alternating cycles, the matching $M_\Delta$ has a partition $M_\Delta=\bigcup\limits_{i=0}^sN_i$ where $N_0=M\setminus C$, $C=\bigcup\limits_{i=1}^sC_i$ and $N_i=C_i\setminus M$ for each $1\leqslant i\leqslant s$. Now suppose $f\in M_\Delta\cap F=\bigcup\limits_{i=0}^sN_i\cap F$. Then $f\in N_i\cap F$ for some $1\leqslant i\leqslant s$. If $i=0$, then $f\in M$ and $f\notin \bigcup\limits_{i=1}^sC_i$. So $f\in M\cap F\subseteq \bigcup\limits_{i=1}^sC_i$. This is a contradiction. If $1\leqslant i\leqslant s$, then $f\in C_i$ and $f\notin M$. So $f\in C_i\cap F\subseteq M\cap F\subseteq M$, which is a contradiction. Thus $M_\Delta\cap F=\emptyset$.
\end{proof}

The following corollary follows directly by Lemma~\ref{lem disjoint nice cycles no F}.

\begin{corollary}\label{cor not mp set F}
Let $G$ be a graph, let $F\subseteq E(G)$. If there exists a perfect matching (or almost perfect matching, respectively) $M$ of $G$ and edge-disjoint $(F,M)$-nice cycles $C_1,C_2,\ldots,C_s$ such that $F\cap M\subseteq \bigcup\limits_{i=1}^sC_i$, then $F$ is not a matching preclusion set of $G$.
\end{corollary}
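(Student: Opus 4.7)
The plan is to read this off directly from Lemma~\ref{lem disjoint nice cycles no F}, since the corollary is essentially its conclusion translated into the language of matching preclusion. Recall that, by definition, $F$ fails to be a matching preclusion set of $G$ exactly when $G-F$ still admits a perfect matching (in the perfect case) or an almost perfect matching (in the almost perfect case). So it suffices to exhibit such a matching of $G-F$ under the stated hypotheses.

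First I would handle the trivial edge case $F\cap M=\emptyset$: then $M\subseteq E(G)\setminus F=E(G-F)$ already, and $M$ itself witnesses that $G-F$ has a perfect (respectively almost perfect) matching of the right type. Otherwise $F\cap M\neq\emptyset$, which is precisely the condition required by Definition~\ref{definition nice cycle} and by Lemma~\ref{lem disjoint nice cycles no F}, so the lemma applies to the given $M,C_1,\ldots,C_s$. It yields the matching $M_\Delta=M\Delta C_1\Delta\cdots\Delta C_s$ with $|M_\Delta|=|M|$ and $M_\Delta\cap F=\emptyset$, hence $M_\Delta\subseteq E(G-F)$. Since $M_\Delta$ has the same cardinality as $M$, it is a perfect matching (respectively almost perfect matching) of $G-F$ of the same flavor as $M$, and therefore $F$ is not a matching preclusion set of $G$.

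There is essentially no substantive obstacle here; the only mild subtlety is that $(F,M)$-nice cycles are defined only when $F\cap M\neq\emptyset$, so the case $F\cap M=\emptyset$ (which can arise vacuously, e.g.\ when $s=0$) must be treated by the direct observation above rather than by invoking the lemma. Everything else is a one-line appeal to Lemma~\ref{lem disjoint nice cycles no F}.
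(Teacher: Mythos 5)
Your proof is correct and matches the paper, which simply states that the corollary follows directly from Lemma~\ref{lem disjoint nice cycles no F} (whose last sentence already gives the needed perfect/almost perfect matching of $G-F$). Your separate treatment of the degenerate case $F\cap M=\emptyset$ is a harmless extra precaution, not a divergence in approach.
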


Let $G$ be a graph, let $F\subseteq E(G)$, and let $M\subseteq E(G)$ be a matching of $G$ such that $F\cap M\neq \emptyset$. Now we divide edges of $F\cap M$ into two classes through $(F,M)$-nice cycles. Let $f\in F\cap M$ be an $F$-fault edge. If there exists an $(F,M)$-nice cycle $C$ such that $f\in C$, then the $F$-fault edge $f$ is called an \emph{$(F,M)$-nice-fault edge}. Let $g\in F\cap M$ be an $F$-fault edge. If there has no $(F,M)$-nice cycles containing $g$, then the fault edge $g$ is called an \emph{$(F,M)$-bad-fault edge}.

\subsection{Even order $n$-grid graphs with $n\geqslant 3$}\label{subsection $n$-Grid graph for n geqslant 3}

Let $G$ be a $(k_0,k_1,\ldots,k_{n-1};n)$-grid graph. Let $n_{e}(G)$ denote the number of even numbers in $k_{0},k_{1},\ldots,k_{n-1}$. Then $1\leqslant n_{e}(G) \leqslant n$ if the order $n(G)$ is even. First, we deal the simplest case that when $n_{e}(G)=1$.

\begin{lemma}\label{lem one even number edge set not mp set}
Let $n\geqslant 3$ and let $G$ be a $(k_0,k_1,\ldots,k_{n-1};n)$-grid graph where $k_0\geqslant 2$ is even and $k_j\geqslant 3$ is odd for $1\leqslant j\leqslant n-1$. Let $M_{0}=E_{0}^{0,1}(G)\cup E_{0}^{2,3}(G)\cup\cdots\cup E_{0}^{k_{0}-2,k_{0}-1}(G)$ and let $F\subseteq E(G)$ such that $|F|\leqslant n$, $F\cap M_0\neq\emptyset$ and $d_F(f)\leqslant n-2$ for $f\in F\cap M_0$. Then $G-F$ has a perfect matching.
\end{lemma}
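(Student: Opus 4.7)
The plan is to apply Corollary~\ref{cor not mp set F}: I will exhibit a family of edge-disjoint $(F,M_0)$-nice cycles whose union contains $F\cap M_0$, so that the symmetric difference of $M_0$ with this family is a perfect matching of $G-F$. Since $M_0=\bigcup_{j\text{ even}}E_0^{j,j+1}(G)$, decompose $F\cap M_0=\bigsqcup_{j\text{ even}}F_j$ with $F_j=F\cap E_0^{j,j+1}(G)$. Every $(f;4)$-cycle for $f\in F_j$ has its four vertices in $V(G_0[j])\cup V(G_0[j+1])$, so cycles from distinct layer pairs are automatically vertex-disjoint. It therefore suffices to handle one non-empty $F_j$ at a time.

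Fix such a layer pair and write $F_j=\{f_1,\ldots,f_t\}$ with $f_i=u_iv_i$, $u_i\in G_0[j]$. For each $u_i$, let $N(u_i)$ be the set of neighbors $u'$ of $u_i$ in $G_0[j]$ such that neither $u_iu'$ nor the corresponding edge $v_iv'$ (with $v'\in G_0[j+1]$ the partner of $u'$) lies in $F$; the $(f_i;4)$-cycle using this $u'$ as $u_i'$ is then an $(F,M_0)$-nice cycle. Writing $A_{u_i}$ (respectively $A_{v_i}$) for the number of edges of $F$ incident to $u_i$ (respectively $v_i$) at positions different from $0$, one gets $|N(u_i)|\geq d_{G_0[j]}(u_i)-(A_{u_i}+A_{v_i})\geq(n-1)-d_F(f_i)\geq 1$. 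I now look for $\sigma(u_i)\in N(u_i)$ for each $i$ such that the edges $\{u_i\sigma(u_i):1\leq i\leq t\}$ form a matching $M^*$ in $G_0[j]$ saturating $U=\{u_1,\ldots,u_t\}$. Given $M^*$, each of its edges yields a nice cycle: an edge $u_iu_k\in M^*$ with both ends in $U$ produces a single nice cycle covering both $f_i$ and $f_k$, while an edge $u_iu'$ with $u'\notin U$ produces a nice cycle covering $f_i$. Because $M^*$ is a matching, the resulting cycles are pairwise edge-disjoint.

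Existence of $M^*$ is verified by Hall's theorem applied to the bipartite subgraph $H'$ of $G_0[j]$ defined by the good-neighbor relation. The key quantitative input is $\sum_{i=1}^t(A_{u_i}+A_{v_i})\leq 2(n-t)$, which holds because every edge of $F\setminus F_j$ is incident to at most two of the $2t$ vertices $u_1,v_1,\ldots,u_t,v_t$. If some $S\subseteq U$ with $|S|=s$ violated Hall, every $u\in S$ would satisfy $|N(u)|\leq s-1$, forcing $A_u+A_v\geq n-s$ and thus $s(n-s)\leq 2(n-s)$; when $n>s$ this gives $s\leq 2$. The remaining tight cases are dispatched directly. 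For $s=2$ with $n\geq 3$, the common good neighbor $u^*$ puts $u_1,u_2$ on the same side of the bipartition of $G_0[j]$, so no single edge of $F\setminus F_j$ can be incident to two fault vertices, contradicting the tightness $\sum=2(n-2)$ against the cardinality bound $|F\setminus F_j|\leq n-2$. For $s=n$, one has $F=F_j$, every $u_i$ is a corner of $G_0[j]$, all $u_i$ share the same neighbor set in $G_0[j]$, and the hypothesis $k_d\geq 3$ for $1\leq d\leq n-1$ implies at most two corners can do so, contradicting $s=n\geq 3$. Hence Hall's condition holds, $M^*$ exists, and the argument concludes via Corollary~\ref{cor not mp set F}.

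The main obstacle is this Hall-type verification: it is where the local hypothesis $d_F(f)\leq n-2$ and the global hypothesis $|F|\leq n$ must be used in tandem with the geometric rigidity of grid graphs, and the assumption $k_d\geq 3$ for $1\leq d\leq n-1$ is essential to rule out the pathological corner packings that would otherwise violate Hall's condition.
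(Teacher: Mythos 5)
Your proposal is correct, but it takes a genuinely different route from the paper's proof. The paper argues by how the faults are spread over the layers $G_0[0],\ldots,G_0[k_0-1]$: if every within-layer fault set $F\cap E(G_0[j])$ has at most $n-2$ edges, it chooses a fault-free path of all-even vertices threading the layers and applies Lemmas~\ref{lem Alleven apm} and~\ref{lem mp(G-u) for u in VallEvenG odd order} to produce, in each layer, an (almost) perfect matching avoiding both the faults and the chosen vertex, then stitches these together along the path; in the only remaining case ($n-1$ faults inside a single layer and exactly one fault edge $f$ in $M_0$) it performs a single nice $4$-cycle exchange $M_0\Delta C$. You instead give a uniform argument: for every edge of $F\cap M_0$ you select a free direction and pack edge-disjoint $(F,M_0)$-nice $4$-cycles whose union covers $F\cap M_0$, concluding by Corollary~\ref{cor not mp set F}; the budget $\sum_i(A_{u_i}+A_{v_i})\le 2(n-t)$, the bipartite-parity observation in the $s=2$ case, and the corner analysis for $s=n$ (in fact, with $n-1\ge 2$ and all $k_d\ge 3$ no two distinct corners of $G_0[j]$ even share a neighbor set, which is stronger than what you claim and still yields the contradiction) replace the paper's layer-by-layer repair. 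What each buys: your argument stays entirely inside the nice-cycle machinery of Section~3 and does not need the induction behind Lemma~\ref{lem mp(G-u) for u in VallEvenG odd order}, and it isolates exactly which hypotheses are used ($|F|\le n$, $d_F(f)\le n-2$, within-layer minimum degree $n-1$, sides at least $3$, bipartiteness); the paper's argument is shorter and avoids any saturation step, but leans on the all-even-vertex lemmas. One point you should tighten: since $U$ may meet both color classes of the bipartite graph $G_0[j]$, Hall's theorem alone does not literally produce a matching of $H'$ saturating $U$; you need the two-sided version, e.g.\ apply Hall separately to $U\cap X$ and $U\cap Y$ and merge the two matchings via the Mendelsohn--Dulmage theorem (or an alternating-path exchange). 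Your verification of $|N(S)|\ge |S|$ for all $S\subseteq U$ covers the subsets needed for that argument, so the proof goes through, but the justification as stated is imprecise rather than complete.
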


\begin{proof}
For $0\leqslant j\leqslant k_0-1$, let $F_j=F\cap E(G_0[j])$. Since $F\cap M_0\neq\emptyset$, then $|F_j|\leqslant n-1$. First let $|F_j|\leqslant n-2$ for any $0\leqslant j\leqslant k_0-1$. Since the size $|V_{allEven}(G_0[j])|=\frac{k_1+1}{2}\frac{k_2+1}{2}\cdots \frac{k_{n-1}+1}{2}\geqslant 2^{n-1} > n$, we can choose a vertex $u_j\in V_{allEven}(G_0[j])$ such that $P=u_0u_1\cdots u_{k_0-1}$ is a path of length $k_0-1$ such that $F$ uncovers $P$. So $|F_j| < n-1 = mp(G_0[j]-u_j)$ and $G_0[j]-F_j$ has an almost perfect matching $M[j]$ which uncovers $u_j$. Let $M_P$ be an almost perfect matching of $P$. Then $M[1]\cup M[2]\cup\cdots \cup M[k_0-1]\cup M_P$ is an almost perfect matching of $G-F$.

Now let $|F_j|=n-1$ for some $0\leqslant j\leqslant k_0-1$. Then $|F\setminus F_j|\leqslant 1$. Since $F\cap M_0\neq\emptyset$, we have $\emptyset \neq F\setminus F_j\subseteq M_0$. Let $\{f\}=F\setminus F_j$. Then $f\in F\cap M_0$. If $G_0[j]-F_j$ has an isolated vertex $x$, then all the edges in $F_j$ are incident with $x$, $d_{G_0[j]}(x)=n-1$ and $x\in V_\delta(G_0[j])\subseteq V_{allEven}(G_0[j])$. If $x$ is incident with $f$, then $d_F(f)=n-1>n-2$ which is a contradiction. So $x$ is not incident with $f$, and there exists an $(f;4)$-cycle $C$ such that $F_j\cap C=\emptyset$. Thus $M_0\Delta C$ is a perfect matching of $G-F$. Suppose $G_0[j]-F_j$ has no isolated vertices. Then there exists an $(f;4)$-cycle $C'$ such that $F_j\cap C'=\emptyset$. Hence $M_0\Delta C'$ is a perfect matching of $G-F$.
\end{proof}

\begin{theorem}\label{thm one even PM set}
Let $n\geqslant 3$ and let $G$ be an $(k_0,k_1,\ldots,k_{n-1};n)$-grid graph. If $n_{e}(G)=1$, then $G$ is super matched.
\end{theorem}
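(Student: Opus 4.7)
The plan is to combine Lemma~\ref{lem one even number edge set not mp set} with $(F,M_0)$-nice cycle arguments via Lemma~\ref{lem nice cycle} to force any optimal matching preclusion set $F$ into its trivial form.

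Without loss of generality, assume $k_0$ is even and $k_1,\ldots,k_{n-1}$ are odd, so $M_0=E_0^{0,1}(G)\cup E_0^{2,3}(G)\cup\cdots\cup E_0^{k_0-2,k_0-1}(G)$ is a perfect matching of $G$. By Lemma~\ref{lem mp number even order}, $mp(G)=n$; let $F\subseteq E(G)$ be an optimal matching preclusion set, $|F|=n$. Then $F\cap M_0\neq\emptyset$. Applying the contrapositive of Lemma~\ref{lem one even number edge set not mp set} produces some $f=uv\in F\cap M_0$ with $d_F(f)=n-1$, so every other edge of $F$ is incident to $u$ or $v$. The only position-0 edges at $u$ or $v$ other than $f$ are $uu^-$ and $vv^+$, both lying in the shifted pairing rather than $M_0$, so in fact $F\cap M_0=\{f\}$.

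For each position $d'\in\{1,\ldots,n-1\}$ at which $u$ has a slice-neighbor $u'_{d'}$ in $G_0[u_0]$, the $(f,0,u_0,d';4)$-cycle $C_{d'}$ is $(F,M_0)$-nice whenever $uu'_{d'}\notin F$ and $vv'_{d'}\notin F$; in that event Lemma~\ref{lem nice cycle} turns $M_0\Delta C_{d'}$ into a perfect matching of $G-F$, contradicting $F$. So every $C_{d'}$ is blocked by an $F$-edge, yielding
\[
d_{G_0[u_0]}(u)\;\leq\;|F\setminus\{f,uu^-,vv^+\}|\;=\;n-1-|F\cap\{uu^-,vv^+\}|.
\]
Combined with $d_{G_0[u_0]}(u)\geq n-1$ (minimum degree of the $(n-1)$-grid slice), this forces $F\cap\{uu^-,vv^+\}=\emptyset$ and $d_{G_0[u_0]}(u)=n-1$, i.e.\ $u_i\in\{0,k_i-1\}$ for each $i\geq 1$. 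Setting $A=\{d':uu'_{d'}\in F\}$ and $B=\{d':vv'_{d'}\in F\}$, we have $|A|+|B|=n-1$ and $A\cup B=\{1,\ldots,n-1\}$, so $(A,B)$ is a partition. Suppose both $A$ and $B$ are non-empty. Pick $d_1\in A$, $d_2\in B$, and let $u'_{d_1d_2}$ (resp.\ $v'_{d_1d_2}$) be the vertex obtained from $u$ (resp.\ $v$) by toggling both coordinates at positions $d_1$ and $d_2$. The 8-cycle
\[
C_8\;=\;u-v-v'_{d_1}-u'_{d_1}-u'_{d_1d_2}-v'_{d_1d_2}-v'_{d_2}-u'_{d_2}-u
\]
alternates its four position-0 edges (all in $M_0$) with four edges at positions $d_1$ or $d_2$ (none in $M_0$). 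Because $A\cap B=\emptyset$, both $vv'_{d_1}$ and $uu'_{d_2}$ avoid $F$, and the remaining four non-$f$ edges have endpoints in $\{u'_{d_1},u'_{d_2},u'_{d_1d_2},v'_{d_1},v'_{d_2},v'_{d_1d_2}\}$, hence also avoid $F$. Thus $C_8\cap F=\{f\}\subseteq M_0$, making $C_8$ an $(F,M_0)$-nice cycle, and Lemma~\ref{lem nice cycle} makes $M_0\Delta C_8$ a perfect matching of $G-F$, contradiction.

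By symmetry we may now assume $B=\emptyset$, so $F=\{uv\}\cup\{uu'_{d'}:1\leq d'\leq n-1\}$. If $u_0=0$, then $u\in V_\delta(G)$ and $F$ is exactly the set of all $n$ edges at $u$, which is trivial. If $u_0\geq 2$, then $uu^-$ is the unique $G-F$-edge at $u$, so any perfect matching of $G-F$ must contain $uu^-$, reducing the task to producing a perfect matching of $G-\{u,u^-\}$ disjoint from $F$. Let $u_1'$ be the unique interior neighbor of $u_1$ in $P_{k_1}$. The matching $M:=M_0\setminus\{uv,u^-u^{-2}\}$ leaves only $v$ and $u^{-2}$ unmatched in $G-\{u,u^-\}$, and the length-5 path
\[
u^{-2}-(u_0-2,u_1',u_2,\ldots,u_{n-1})-(u_0-1,u_1',\ldots)-(u_0,u_1',\ldots)-(u_0+1,u_1',\ldots)-v
\]
is $M$-alternating (its two $M$-edges lie in $E_0^{u_0-2,u_0-1}$ and $E_0^{u_0,u_0+1}$) and has all five edges in $G-F$, because $F\cap E_0(G)=\{f\}$ and the terminal edge $vv'_1$ is absent from $F$ since $B=\emptyset$. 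Augmenting $M$ along this path and appending $uu^-$ yields a perfect matching of $G-F$, a contradiction. The case $A=\emptyset$ with $u_0\neq k_0-2$ is handled symmetrically by swapping the roles of $u$ and $v$. The main obstacles are recognizing the correct 8-cycle in the ``two-sided blocking'' case and the correct length-5 augmenting path in the ``one-sided non-corner'' case; everything else is structural bookkeeping driven by Lemma~\ref{lem one even number edge set not mp set} and Lemma~\ref{lem nice cycle}.
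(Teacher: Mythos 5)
Your proof is correct, and while it starts exactly as the paper does (Lemma~\ref{lem mp number even order} for $mp(G)=n$, then the contrapositive of Lemma~\ref{lem one even number edge set not mp set} to produce $f=uv\in F\cap M_0$ with $d_F(f)=n-1$, then $(f;4)$-cycles to force $F\cap M_0=\{f\}$, $uu^-,vv^+\notin F$ and $u$ a degree-$(n-1)$ corner of its slice), it diverges from the paper in the two remaining cases. Where both sides $A$ and $B$ are nonempty, the paper goes back to the slice-by-slice construction of Lemma~\ref{lem one even number edge set not mp set} (both $|F_j|,|F_{j+1}|\leqslant n-2$, so one picks a path of $V_{allEven}$ vertices avoiding $F$ and patches almost perfect matchings of the slices), whereas you exhibit a single $(F,M_0)$-nice $8$-cycle through the $3$-cube spanned by positions $0,d_1,d_2$; I checked its alternation and that all four non-$M_0$ edges avoid $F$ (the two incident with $u$ or $v$ because $A\cap B=\emptyset$, the other two because they touch neither $u$ nor $v$), so Lemma~\ref{lem nice cycle} (or Corollary~\ref{cor not mp set F}) indeed kills this case. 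Where $F$ is concentrated at $u$ but $u_0\geqslant 2$, the paper builds a perfect matching of $G-F$ from four consecutive slices $j-2,\ldots,j+1$ using almost perfect matchings $M^{j-2},\ldots,M^{j+1}$ uncovering $V_{allEven}$ vertices plus the $u_jw_j$/$u_{j-1}w_{j-1}$ exchange, whereas you delete $uv$ and $u^-u^{-2}$ from $M_0$, augment along a length-$5$ alternating path through the neighbouring $u_1'$-column, and add back $uu^-$; this is shorter, avoids the allEven machinery entirely, and is correct (the path's $M$-edges lie in $E_0^{u_0-2,u_0-1}$ and $E_0^{u_0,u_0+1}$, and no path edge or $uu^-$ lies in $F$). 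So your argument buys a more local, purely cycle/augmenting-path proof at the cost of the explicit $8$-cycle, while the paper reuses its odd-order-slice lemmas. Two small bookkeeping remarks: before you know $u$ is a corner there may be two $(f,0,u_0,d';4)$-cycles at a position $d'$, so the counting should be (as your displayed inequality in fact does) over slice-neighbours rather than over positions; and in the symmetric case $A=\emptyset$ you should state the boundary outcome $u_0=k_0-2$ explicitly, where the role-swapped argument gives that $v$ has $d_G(v)=n$ and is isolated, i.e.\ $F$ is trivial, exactly as in your $u_0=0$ case.
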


\begin{proof} Let $F\subseteq E(G)$ be an optimal matching preclusion set of $G$, i.e. $|F|=n$ and $G-F$ has no perfect matching. We need to prove that $F$ is trivial, i.e. $G-F$ has an isolated vertex. Without loss of generality, we may assume $k_{0}$ is even. Then $k_j\geqslant 3$ is odd for $1\leqslant j\leqslant n-1$. Let $M_{0}=E_{0}^{0,1}(G)\cup E_{0}^{2,3}(G)\cup\cdots\cup E_{0}^{k_{0}-2,k_{0}-1}(G)$. Then $M_0$ is a perfect matching of $G$ and $M_{0}\cap F\neq \emptyset$. If $d_F(f)\leqslant n-2$ for any $f\in F\cap M_0$, then by Lemma~\ref{lem one even number edge set not mp set}, $G-F$ has a perfect matching. This is a contradiction. So there exists $f\in F\cap M_0$ such that $d_F(f)=n-1$. Let $f$ be such an edge. Then $F\cap M_0=\{f\}$ and other $F$-fault edges are incident with $f$. We assume that $f=u_{j}u_{j+1}$ for some $u_{j}\in V(G_{0}[j])$ and $u_{j+1}\in V(G_{0}[j+1])$, where $j$ is even and $0\leqslant j \leqslant k_0-2$.

For $i=j$ or $j+1$, let $F_{i}=F\cap E(G_{0}[i])$, and $P_i=\{d\mid d$ is the position of $e$, $e\in F_i\}$. Then $F_j\cap F_{j+1}=\emptyset$, $F_j\cup F_{j+1}\subseteq F\setminus \{f\}$, $|P_j|\leqslant |F_j|$, $|P_{j+1}|\leqslant |F_{j+1}|$, $0\not\in P_j\cup P_{j+1}$ and $P_j\cup P_{j+1}\subseteq \{1,2,\ldots,n-1\}$. If $P_j\cup P_{j+1}\neq \{1,2,\ldots,n-1\}$, then for any position $d\in \{1,2,\ldots,n-1\}\setminus (P_j\cup P_{j+1})$, we can find an $(f,0,j,d;4)$-cycle $C$ such that $M_0\Delta C$ is a perfect matching of $G-F$. So $P_j\cup P_{j+1}=\{1,2,\ldots,n-1\}$, $P_j\cap P_{j+1}=\emptyset$, $|P_j|=|F_j|$, $|P_{j+1}|=|F_{j+1}|$ and $F_j\cup F_{j+1}=F\setminus \{f\}$. If $F_{j}\neq \emptyset$ and $F_{j+1}\neq \emptyset$, then $|F_{j}|,|F_{j+1}|\leqslant n-2$. Then by the same argument as in the first paragraph of the proof of Lemma~\ref{lem one even number edge set not mp set}, $G-F$ has a perfect matching. So one of $F_{j}$ and $F_{j+1}$ is empty.

Without loss of generality, we may assume that $F_{j+1}=\emptyset$. Then $P_j=\{1,2,\ldots,n-1\}$, $F_j=F\setminus \{f\}$, and all the edges in $F$ are incident with $u_{j}$. If $d_{G_{0}[j]}(u_{j}) > n-1$, then for some position $d\in \{1,2,\ldots,n-1\}$, there are two edges incident with $u_j$ at position $d$, in which one is not in $F$. So we can find an $(f,0,j,d;4)$-cycle $C'$ such that $M_0\Delta C'$ is a perfect matching of $G-F$. Hence $d_{G_{0}[j]}(u_{j}) = n-1$ and $u_{j}\in V_\delta(G_0[j])\subseteq V_{allEven}(G_0[j])$ is an isolated vertex of $G_{0}[j]-F_j$.

\newsavebox{\evenorderoneMI}
\savebox{\evenorderoneMI}{
\setlength{\unitlength}{1em}
\begin{picture}(20,10)\label{pic evenorderoneMI}

\put(2.5,5){\oval(3,6)}
\put(7.5,5){\oval(3,6)}
\put(12.5,5){\oval(3,6)}
\put(17.5,5){\oval(3,6)}

\put(0.5,0.5){$G_0[j-2]$}
\put(5.5,0.5){$G_0[j-1]$}
\put(11.5,0.5){$G_0[j]$}
\put(15.5,0.5){$G_0[j+1]$}

\put(1.5,8.5){$M^{j-2}$}
\put(6.5,8.5){$M^{j-1}$}
\put(12,8.5){$M^j$}
\put(16.5,8.5){$M^{j+1}$}

\put(2.5,7){\circle*{0.3}}
\put(7.5,3.5){\circle*{0.3}}
\put(7.5,4.5){\circle*{0.3}}
\put(7.5,7){\circle*{0.3}}
\put(12.5,3.5){\circle*{0.3}}
\put(12.5,4.5){\circle*{0.3}}
\put(12.5,7){\circle*{0.3}}
\put(17.5,3.5){\circle*{0.3}}
\put(17.5,7){\circle*{0.3}}

\put(2.5,7){\line(1,0){15}}
\put(7.5,4.5){\line(1,0){5}}
\put(7.5,3.5){\line(1,0){10}}
\put(7.5,3.5){\line(0,1){1}}
\put(12.5,3.5){\line(0,1){1}}

\put(1.5,6){$x_{j-2}$}
\put(6.5,6){$x_{j-1}$}
\put(12,6){$x_{j}$}
\put(16.5,6){$x_{j+1}$}

\put(6.5,5){$w_{j-1}$}
\put(12,5){$w_{j}$}

\put(6.5,2.5){$u_{j-1}$}
\put(12,2.5){$u_{j}$}
\put(16.5,2.5){$u_{j+1}$}

\put(14.75,2.5){$f$}
\put(9.75,2.75){$g$}
\put(9.75,4.75){$h$}
\end{picture}}

\begin{figure}[ht]
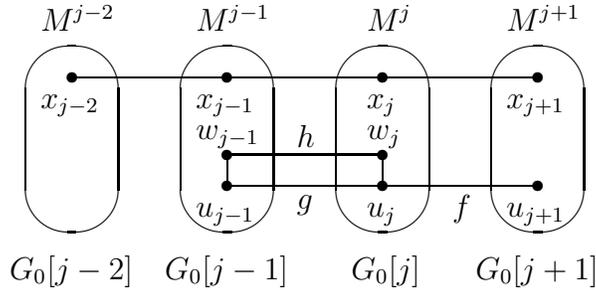

\centering
\usebox{\evenorderoneMI}
\caption{The construction of $M_I$.}
\label{fig: evenorderoneMI}
\end{figure}

If $k_0=2$, then $j = 0$, $d_{G}(u_{j}) = n$, and $u_{j}$ is an isolated vertex of $G-F$. Now we assume that $k_0\geqslant 4$. Suppose $j > 0$. Then $2\leqslant j\leqslant k_0-2$. Let $I=\{j-2,j-1,j,j+1\}$. Then we can find a perfect matching $M_F=M_0'\cup M_I$ of $G-F$. The following is the construction of $M_F$. Let $M_0'=M_{0}\setminus (E_{0}^{j-2,j-1}(G)\cup E_{0}^{j,j+1}(G))$, then $M_0'$ is a matching of $G$ such that $M_0'$ uncovers subgraphs $G_0[j-2],G_0[j-1],G_0[j],G_0[j+1]$. These four subgraphs and the construction of $M_I$ are shown in Figure~\ref{fig: evenorderoneMI}. For each $i\in I$, let $x_i\in V_{allEven}(G_0[i])$ such that $x_{j-2}x_{j-1}x_{j}x_{j+1}$ is a path whose edges are at position $0$. By Lemma~\ref{lem Alleven apm}, $G_0[i]$ has an almost perfect matching $M^i$ such that $M^i$ uncovers $x_i$. Since $n\geqslant 3$ and $k_i\geqslant 3$ for each $i\in \{1,2,\ldots,n-1\}$, we have $|V_\delta(G_0[j])|=2^{n-1}\geqslant 4>1$ and we can choose $x_j\in V_\delta(G_0[j])\subseteq V_{allEven}(G_0[j])$ such that $x_j\neq u_j$ and so $x_j$ is not adjacent to $u_j$. Then $x_{j+1}\neq u_{j+1}$ and $x_{j+1}$ is not adjacent to $u_{j+1}$. Let $u_jw_j\in M^j$ for some $w_j\in V(G_0[j])$. Let $g=u_ju_{j-1}$ and $h=w_jw_{j-1}$ be edges at position $0$ where $u_{j-1},w_{j-1}\in V(G_0[j-1])$. Then we can choose $M^{j-1}$ (a copy of $M^j$) such that $u_{j-1}w_{j-1}\in M^{j-1}$. Let $M_I'=M^{j-2}\cup \biggl(M^{j-1}\setminus \{u_{j-1}w_{j-1}\}\biggr)\cup \biggl(M^{j}\setminus \{u_{j}w_{j}\}\biggr)\cup M^{j+1}$ and let $S=\{x_{j-2},x_{j-1},x_j,x_{j+1},u_{j-1},u_j,w_{j-1},w_j\}$. Then $M_I'\cap F=\emptyset$ and $M_I'$ is a matching of $G$ such that $M_I'$ covers vertices in $V(G_0[i])\setminus S$ for each $i\in I$. Let $M_I=M_I'\cup \{g,h\}\cup \{x_{j-2}x_{j-1},x_{j}x_{j+1}\}$. Then $M_F=M_0'\cup M_I$ is a perfect matching of $G-F$. Hence $j = 0$. So $d_{G}(u_{j}) = n$ and $u_{j}$ is an isolated vertex of $G-F$.
\end{proof}

\begin{theorem}\label{thm even PM set and number}
Let $n\geqslant 3$ and let $G$ be an even order $(k_0,k_1,\ldots,k_{n-1};n)$-grid graph. Then $G$ is super matched.
\end{theorem}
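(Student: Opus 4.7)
The plan is to proceed by induction on $n$ (base case $n=3$), with Theorem~\ref{thm one even PM set} already handling the case $n_e(G)=1$, so that the work lies entirely in $n_e(G)\geqslant 2$. Let $F\subseteq E(G)$ be an optimal matching preclusion set, so $|F|=n$ by Lemma~\ref{lem mp number even order}, and the goal is to prove that $G-F$ has an isolated vertex.

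Since $n_e(G)\geqslant 2$, I choose a position $d$ with $k_d$ even such that some other $k_i$ (with $i\neq d$) is also even; each slice $G_d[j]$ is then an even-order $(n-1)$-grid graph to which the inductive hypothesis applies (or Lemma~\ref{lem 2-grid PM set} when $n-1=2$). Let $M_d=E_d^{0,1}(G)\cup E_d^{2,3}(G)\cup\cdots\cup E_d^{k_d-2,k_d-1}(G)$ be the canonical perfect matching. Because $F$ precludes matchings, $F\cap M_d\neq\emptyset$; otherwise $M_d$ itself would be a perfect matching of $G-F$.

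The heart of the argument follows the blueprint of Theorem~\ref{thm one even PM set} via the nice-cycle machinery. For each $f=uv\in F\cap M_d$ with $u\in V(G_d[j])$ and $v\in V(G_d[j+1])$, the number of $(f;4)$-cycles through $f$ equals $d_{G_d[j]}(u)$, and each edge of $F\setminus\{f\}$ blocks only a bounded number of these candidate cycles. A pigeonhole/counting argument analogous to Lemma~\ref{lem one even number edge set not mp set} either produces edge-disjoint $(F,M_d)$-nice cycles covering $F\cap M_d$, in which case Corollary~\ref{cor not mp set F} furnishes a perfect matching of $G-F$ and contradicts that $F$ is a matching preclusion set, or else forces the extremal configuration $F\cap M_d=\{f\}$ with $d_F(f)=n-1$. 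In the extremal case, the position-set analysis of Theorem~\ref{thm one even PM set} applied to $P_j\cup P_{j+1}\subseteq\{0,1,\ldots,n-1\}\setminus\{d\}$ forces all of $F\setminus\{f\}$ to lie in a single slice (say $G_d[j]$, so $F_{j+1}=\emptyset$), to be incident to the endpoint $u$ of $f$, with $d_{G_d[j]}(u)=n-1$ and hence $u\in V_{\delta}(G_d[j])$.

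The main obstacle, and the step that differs most from Theorem~\ref{thm one even PM set}, is ruling out interior $j\in\{1,2,\ldots,k_d-2\}$. If $j\in\{0,k_d-1\}$, then $d_G(u)=n$, so $u$ is isolated in $G-F$ and $F$ is trivial, as desired. Otherwise I would construct a perfect matching of $G-F$ inside a four-slice window $G_d[j-2]\cup G_d[j-1]\cup G_d[j]\cup G_d[j+1]$, mirroring Figure~\ref{fig: evenorderoneMI} but adapted to the even-order setting: on the complement of the window use $M_d$; on each of the four slices invoke the inductive hypothesis to extract a perfect matching avoiding $F$ together with a short alternating adjustment around $u$; finally stitch the four slices together using position-$d$ cross-edges disjoint from $F$. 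The extra room coming from $n_e(G)\geqslant 2$, together with $|V_{\delta}(G_d[j])|\geqslant 2^{n-1}\geqslant 4$, ensures that all stitching vertices can be chosen distinct from $u$ and from the endpoints of edges in $F$. Redesigning this stitch for even-order slices, where one now works with perfect rather than almost-perfect matchings of each slice, is the delicate technical core of the proof.
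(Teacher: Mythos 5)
Your proposal has two genuine gaps, both at the places you yourself flag as the "heart" and the "core" of the argument. First, the claimed dichotomy -- that a ``pigeonhole/counting argument analogous to Lemma~\ref{lem one even number edge set not mp set}'' yields either edge-disjoint $(F,M_d)$-nice cycles covering $F\cap M_d$ (whence Corollary~\ref{cor not mp set F} applies) or the extremal configuration $F\cap M_d=\{f\}$ with $d_F(f)=n-1$ -- is nowhere established, and the cited analogy cannot supply it: the proof of Lemma~\ref{lem one even number edge set not mp set} is not a nice-cycle count at all, but a construction of almost perfect matchings of the \emph{odd-order} slices along a path of all-even vertices, machinery that is unavailable once $n_e(G)\geqslant 2$ and the slices $G_d[j]$ have even order. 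In particular you give no argument for $|F\cap M_d|\geqslant 2$, where $4$-cycles through distinct fault cross-edges in the same layer $E_d^{j,j+1}(G)$ interfere with one another and with the slice edges of $F$; this is precisely the regime in which the $2$-dimensional case produces nontrivial optimal sets (Lemma~\ref{lem 2-grid PM set}), so the use of $n\geqslant 3$ must appear explicitly and does not. Second, the elimination of interior $j$, which you defer as ``the delicate technical core'' requiring a redesigned four-slice stitch in the spirit of Figure~\ref{fig: evenorderoneMI}, is simply not carried out; as written the proposal is a plan, not a proof. (You also lean on ``Lemma~\ref{lem 2-grid PM set} when $n-1=2$'' as if the $2$-grid slices were super matched, which they need not be.)

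It is worth seeing how the paper avoids both difficulties, because even-order slices make everything easier, not harder. The paper inducts on $n_e(G)$ and argues: if every slice $G_0[s]$ had a perfect matching avoiding $F$, their union would be a perfect matching of $G-F$; hence some slice $G_0[t]$ has $F_t=F\cap E(G_0[t])$ a matching preclusion set, and since $mp(G_0[t])=n-1$ by Lemma~\ref{lem mp number even order} while $|F_t|\leqslant n-1$ (one fault is a cross edge), $F_t$ is an \emph{optimal} preclusion set of the slice, which the inductive hypothesis makes trivial: an isolated vertex $u$ with $d_{G_0[t]}(u)=n-1$. A single $(f;4)$-cycle swap then forces $u$ to be an endpoint of the unique cross fault $f\in E_0^{k,k+1}(G)$, and the interior case $k\neq 0$ is killed by the one-line matching $M[0]\cup\cdots\cup M[k-2]\cup E_0^{k-1,k}(G)\cup M[k+1]\cup\cdots\cup M[k_0-1]$, available because every slice other than $G_0[k]$ is fault-free. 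So the counting dichotomy you assert can be bypassed entirely (if $|F\cap E(G_d[j])|\leqslant n-2$ for all $j$, per-slice perfect matchings already avoid $F$), and the stitch you postpone is, in the even-slice setting, the trivial step of pairing slice $j$ with its fault-free neighbour by the full set of cross edges; no analogue of Figure~\ref{fig: evenorderoneMI} is needed.
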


\begin{proof}
Without loss of generality, we may assume $k_{0}$ is even. Let $M=E_{0}^{0,1}(G)\cup E_{0}^{2,3}(G)\cup \cdots\cup E_{0}^{k_{0}-2,k_{0}-1}(G)$. Then $M$ is a perfect matching of $G$. Here we prove that $G$ is super matched by induction on $n_{e}(G)$. When $n_{e}(G)=1$, the statement holds by Theorem~\ref{thm one even PM set}. Now we suppose $n_{e}(G)\geqslant 2$. Let $F$ be an optimal matching preclusion set of $G$, i.e. $|F|=n$ and $G-F$ has no perfect matching. Then $M\cap F\neq \emptyset$. We need to show that $F$ is trivial, i.e. $G-F$ has an isolated vertex.

Let $f=x_{k}x_{k+1}\in M\cap F$ for some $x_{k}\in V(G_{0}[k])$, $x_{k+1}\in V(G_{0}[k+1])$, where $k$ is even and $0\leqslant k\leqslant k_0-2$. If for each $0\leqslant s\leqslant k_0-1$, $G_{0}[s]-F$ has a perfect matching $M_{s}$, then $M_{0}\cup M_{1}\cup \cdots\cup M_{k_{0}-1}$ is a perfect matching of $G-F$. So there exists $t\in \{0,1,\ldots,k_{0}-1\}$ such that $G_{0}[t]-F$ has no perfect matching, i.e. $F_{t}=F\cap E(G_{0}[t])\subseteq F\setminus \{f\}$ is a matching preclusion set of $G_{0}[t]$. The subgraph $G_0[t]$ is an even order $(k_{1},\ldots,k_{n-1};n-1)$-grid graph. Let $n_{e}(G_0[t])$ be the number of even numbers in $k_{1},\ldots,k_{n-1}$. Then $1\leqslant n_e(G_0[t])<n_e(G)$. By Lemma~\ref{lem mp number even order}, we have $mp(G_0[t])=n-1$. So $|F_t|\geqslant n-1$. Since $|F_t|\leqslant n-1$, we have $|F_t|= n-1$ and $F_t$ is an optimal matching preclusion set of $G_0[t]$. By induction hypothesis, the optimal matching preclusion set $F_t$ is trivial, i.e. $G_0[t]-F$ has an isolated vertex $u$ with vertex degree $d_{G_{0}[t]}(u)=n-1$.

Now for each $j\in \{0,1,\ldots,k_{0}-1\}\setminus \{t\}$, $F_j=E(G_{0}[j])\cap F=\emptyset$ and $G_{0}[j]-F_j$ has a perfect matching $M[j]$. If $u \notin \{x_{k},x_{k+1}\}$, then we can find an $(f;4)$-cycle $C$ such that $M\Delta C$ is a perfect matching of $G-F$. So $u\in\{x_{k},x_{k+1}\}$. Without loss of generality, we may assume $u=x_k$, then $t=k$. If $k\neq 0$, then $M[0]\cup M[1]\cup\ldots\cup M[k-2]\cup E_{0}^{k-1,k}(G)\cup M[k+1]\cup\ldots\cup M[k_{0}-1]$ is a perfect matching of $G-F$. This is a contradiction. Thus $k=t=0$. So $d_{G}(u)=n$ and $u$ is an isolated vertex of $G-F$.
\end{proof}

Lemma~\ref{lem mp number even order} and Theorem~\ref{thm even PM set and number} imply the even order part of our main theorem.

\begin{theorem}\label{thm even order mp}
Let $G$ be an even order $n$-grid graph. Then $mp(G)=n$. If $n\geqslant 3$, then $G$ is super matched.
\end{theorem}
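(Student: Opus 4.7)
The plan is to assemble this theorem directly from the two main results of this section, with essentially no new work required. The statement has two parts: the value of the matching preclusion number, and the super-matched property (when $n\geqslant 3$).

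For the first part, Lemma~\ref{lem mp number even order} already establishes $mp(G)=n$ for every even-order $n$-grid graph, with no restriction on $n$. So I would simply invoke that lemma to conclude $mp(G)=n$ whether $n=1$, $n=2$, or $n\geqslant 3$. No further case analysis is needed; in particular, the $n=1$ case (an even path) and the $n=2$ case are subsumed.

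For the second part, I would invoke Theorem~\ref{thm even PM set and number}, which states precisely that every even-order $n$-grid graph with $n\geqslant 3$ is super matched. By definition, this means every optimal matching preclusion set of $G$ is trivial, which is what is claimed. Since $mp(G)=n=\delta(G)$ by the first part, and a trivial preclusion set in the even-order case consists of the edges incident with a single vertex, the super-matched conclusion is exactly the characterization we want.

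There is no real obstacle here: the heavy lifting — the inductive construction of perfect matchings avoiding $|F|\leqslant n-1$ fault edges (Lemma~\ref{lem mp number even order}), the $(f;4)$-cycle / nice-cycle technique for rerouting around a fault edge, and the delicate induction on $n_e(G)$ carried out in Theorem~\ref{thm even PM set and number} via the base case Theorem~\ref{thm one even PM set} — has already been done earlier in the section. The present theorem is a clean restatement packaging Lemma~\ref{lem mp number even order} and Theorem~\ref{thm even PM set and number} into the form required by the Main Theorem, and its proof should be a single short paragraph citing these two results.
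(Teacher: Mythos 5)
Your proposal is correct and matches the paper exactly: the theorem is stated there as an immediate consequence of Lemma~\ref{lem mp number even order} (for $mp(G)=n$) and Theorem~\ref{thm even PM set and number} (for the super matched property when $n\geqslant 3$), with no additional argument. Nothing is missing.
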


In particular, the $(2,2,\ldots,2;n)$-grid graph is the hypercube $Q_{n}$. Lemmas~\ref{lem mp number even order} and \ref{lem 2-grid PM set} and Theorem~\ref{thm even PM set and number} imply immediately the following result.

\begin{corollary}[\cite{3}]
$mp(Q_{n})=n$ and $Q_{n}$ is super matched.
\end{corollary}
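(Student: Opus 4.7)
The plan is to combine the three cited results after observing that $Q_n$ is by definition the $(2,2,\ldots,2;n)$-grid graph, and hence an $n$-grid graph of (even) order $2^n$.

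First I would apply Lemma~\ref{lem mp number even order} directly: since $Q_n$ is an even order $n$-grid graph, we obtain $mp(Q_n)=n$ for every $n\geq 1$ at once, with no further work.

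For the super matched claim I would split on $n$. When $n\geq 3$, Theorem~\ref{thm even PM set and number} applies verbatim, so every optimal matching preclusion set of $Q_n$ is trivial. When $n=2$, $Q_2$ is a $(2,2;2)$-grid graph, so Lemma~\ref{lem 2-grid PM set} (with $k_0=2$ even) states that every optimal matching preclusion set is either trivial or consists of four specified edges with $k_1=3$; since $k_1=2$ here, the exceptional case is impossible and $F$ must be trivial. When $n=1$, $Q_1=P_2$ is a single edge, so the unique optimal matching preclusion set consists of that edge, which is incident with a single vertex (either endpoint), hence trivial by definition.

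The main obstacle is essentially bookkeeping: Theorem~\ref{thm even PM set and number} is formulated for $n\geq 3$, so the low-dimensional cases $n=1,2$ must be dispatched separately. Once this is done via Lemma~\ref{lem 2-grid PM set} (ruling out the $k_1=3$ exception) and direct inspection of $P_2$, the corollary is immediate.
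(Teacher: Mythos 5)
Your proposal is correct and follows the paper's own route: the paper derives the corollary from exactly the same three ingredients (Lemma~\ref{lem mp number even order}, Lemma~\ref{lem 2-grid PM set}, and Theorem~\ref{thm even PM set and number}), and your explicit handling of the low-dimensional cases $n=1,2$ (ruling out the $k_1=3$ exception since $k_1=2$, and inspecting $P_2$) just spells out what the paper leaves as ``immediate.''
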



\section{Matching preclusion for odd order $n$-grid graphs}\label{section Matching preclusion of odd order $n$-grid graphs}

Let $G$ be an odd order $(k_0,k_1,\ldots,k_{n-1};n)$-grid. Then $k_i\geqslant 3$ is odd for each $0\leqslant i \leqslant n-1$. By using results of matching preclusion for even order $n$-grid graphs, we get matching preclusion for odd order $n$-grid graphs in this section.

By Lemma~\ref{lem Sumeven apm}, for any vertex $u\in V_e(G)$, we can find an almost perfect matching $M_u$ of $G$ which uncovers $u$. By Lemma~\ref{lem Bipartite partition}, for any vertex $u\in V_o(G)$, the graph $G-u$ has no perfect matching. Then the set of edges incident with a vertex in $V_{o}(G)$ is a matching preclusion set. In the proof of Theorem~\ref{thm odd PM number}, we may follow the summary of this proof in Figure~\ref{fig: oddordertable}.

\newsavebox{\oddordertable}
\savebox{\oddordertable}{
$mp(G)>n \left\{\begin{tabular}{l}$n=1$\\ \hline\\ $n\geqslant 2$
$\left\{\begin{tabular}{l}$G-F$ has an isolated vertex $u$\\ \hline\\ $G-F$ has no isolated vertex\\ $\Downarrow$\\
$\left\{\begin{tabular}{l}$F_{j2}\neq \emptyset$ for all $0\leqslant j\leqslant n-1$\\ \hline\\ $F_{i2}=\emptyset$ for some $0\leqslant i\leqslant n-1$\\ $\Downarrow$\\
$\left\{\begin{tabular}{l}$|F_{i1}|<n$\\ $|F_{i1}|=n$
$\left\{\begin{tabular}{l}$H_{i1}-F$ has an islolated vertex $v$\\  \hline\\ $H_{i1}-F$ has no islolated vertex\\ $\Downarrow$\\
$\left\{\begin{tabular}{l}$n\geqslant 3$\\ $n=2$
\end{tabular}\right.$
\end{tabular}\right.$
\end{tabular}\right.$
\end{tabular}\right.$
\end{tabular}\right.$
\end{tabular}\right.$
}

\begin{figure}[ht]
\centering
\usebox{\oddordertable}
\caption{Summary of the proof of Theorem~\ref{thm odd PM number}.}
\label{fig: oddordertable}
\end{figure}

\begin{theorem}\label{thm odd PM number}
Let $G$ be an odd order $(k_0,k_1,\ldots,k_{n-1};n)$-grid graph. Then $mp(G)=n+1$.
\end{theorem}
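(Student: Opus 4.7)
The plan is to prove both inequalities $mp(G)\le n+1$ and $mp(G)\ge n+1$. For the upper bound I will exhibit a trivial matching preclusion set: take $u=(1,0,\ldots,0)$, which lies in $V_o(G)$ and has degree exactly $n+1$ (two neighbours at position $0$ since $k_0\ge 3$, and one at each of the other $n-1$ positions). Letting $F$ be the $n+1$ edges incident with $u$, the vertex $u$ is isolated in $G-F$, so any almost perfect matching of $G-F$ would restrict to a perfect matching of $G-u$; but this is forbidden by Lemma~\ref{lem Bipartite partition}. Hence $F$ is a matching preclusion set and $mp(G)\le n+1$.

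For the lower bound I would induct on $n$. The base $n=1$ is an odd path $P_k$: removing any single edge splits it into two subpaths whose vertex counts have opposite parity, yielding an almost perfect matching of $G-F$. In the inductive step with $n\ge 2$, let $F\subseteq E(G)$ with $|F|\le n$, and follow the outline of Figure~\ref{fig: oddordertable}, splitting on whether $G-F$ has an isolated vertex. If it does, call it $u$: then $d_G(u)\le |F|\le n$ together with $d_G(u)\ge\delta(G)=n$ forces $d_G(u)=n$, $|F|=n$, and $F$ to be precisely the star at $u$. Hence $u\in V_\delta(G)=V_{allEven}(G)\subseteq V_e(G)$, and Lemma~\ref{lem Sumeven apm} produces an almost perfect matching $M_u$ uncovering $u$; since $M_u$ avoids every edge incident with $u$, we have $M_u\cap F=\emptyset$, settling this case.

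The main case is when $G-F$ has no isolated vertex. For each position $j$ partition $G$ at $j$ and write $F_{j1}=F\cap E(G_j[0])$, $F_{j2}=F\setminus F_{j1}$; the odd-order $(n-1)$-grid $G_j[0]$ is accessible to the inductive hypothesis (and to Lemma~\ref{lem mp(G-u) for u in VallEvenG odd order}), while the canonical matching $E_j^{1,2}(G)\cup E_j^{3,4}(G)\cup\cdots\cup E_j^{k_j-2,k_j-1}(G)$ perfectly covers the complementary slices $G_j[1]\cup\cdots\cup G_j[k_j-1]$. If some $F_{i2}=\emptyset$ and $|F_{i1}|<n$, these two matchings combine into an almost perfect matching of $G-F$; if $|F_{i1}|=n$, a subcase analysis on whether $G_i[0]-F$ has an isolated vertex $v$ either exploits the extra cross-slice edge at $v$ (present because $d_G(v)=n+1$) to absorb $v$ through its neighbour in $G_i[1]$, or performs an $(F,M)$-nice-cycle swap using Definition~\ref{definition nice cycle} and Lemma~\ref{lem disjoint nice cycles no F}. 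If $F_{j2}\ne\emptyset$ for every position $j$, the bound $|F|\le n$ distributes $F$ so thinly that some position $d$ admits an $(f;4)$-cycle swap yielding the desired matching. The principal obstacle is the deepest subcase $|F_{i1}|=n$ with $G_i[0]-F$ having no isolated vertex; here the inductive hypothesis powers the argument for $n\ge 3$, but the base $n=2$ must be handled by direct construction on a single $(k_0,k_1;2)$-grid, in the spirit of the proof of Lemma~\ref{lem 2-grid PM set}.
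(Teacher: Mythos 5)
Your upper bound and the isolated-vertex case of the lower bound are fine and match the paper, but your choice of decomposition in the main case creates genuine gaps. You split $G$ at position $j$ into the single odd slice $G_j[0]$ versus the rest, whereas the paper splits off the \emph{last} slice, keeping the even-order $n$-grid $H_{j1}=G-G_j[k_j-1]$ and the odd $(n-1)$-grid $H_{j2}=G_j[k_j-1]$. This difference is not cosmetic. In your subcase ``$F\subseteq E(G_i[0])$, $|F|=n$, and $G_i[0]-F$ has an isolated vertex $v$'', the vertex $v$ can have within-slice degree $n$ (so indeed $d_G(v)=n+1$, as you say) while lying in $V_o(G_i[0])$; then by Lemma~\ref{lem Bipartite partition} the graph $G_i[0]-v$ has \emph{no} perfect matching, so ``absorbing $v$ through its neighbour in $G_i[1]$'' and covering the remaining slices by the canonical cross matching cannot be completed. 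Concretely, take $n=3$, $G=P_3\Box P_3\Box P_5$, $i=0$, $v=(0,0,1)$, and let $F$ be the three edges of $G_0[0]$ at $v$: here $|F|=n$, $G-F$ has no isolated vertex, $v$ is isolated in $G_0[0]-F$ with $d_G(v)=4$, yet $v\in V_o(G_0[0])$, and any almost perfect matching of $G-F$ must use the cross edge $vv^+$ \emph{and} cannot be slice-local on $G_0[0]-v$ (its bipartition classes have sizes $8$ and $6$). The paper's decomposition avoids this: an isolated vertex of $H_{i1}-F$ has degree exactly $n=\delta(H_{i1})$, forcing its within-slice degree to be $n-1$ and hence membership in $V_{allEven}$ of the slice $G_i[k_i-2]$, which is what makes the cross-edge absorption legitimate (via Lemma~\ref{lem Alleven apm}).

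The second gap is the deepest subcase you flag yourself: $F\subseteq E(G_i[0])$, $|F|=n$, and $G_i[0]-F$ has no isolated vertex. You assert that for $n\geqslant 3$ ``the inductive hypothesis powers the argument,'' but your induction carries only the matching preclusion \emph{number}: the hypothesis gives $mp(G_i[0])=n$, and since $|F|=n$ this does not rule out that $F$ is an (optimal) matching preclusion set of the slice, so you cannot conclude that $G_i[0]-F$ has an almost perfect matching. To close this you would need the structure theorem for optimal matching preclusion sets of odd $(n-1)$-grids (Theorem~\ref{thm odd PM set} one dimension down), which is not part of your induction and is proved in the paper only \emph{after} this theorem, or else the even-order super-matched result (Theorem~\ref{thm one even PM set}); the paper can invoke the latter precisely because its piece $H_{i1}$ is an even-order $n$-grid, while your piece is a single odd slice to which that theorem does not apply. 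Your remaining case (``$F_{j2}\neq\emptyset$ for every $j$'') is also only asserted via a vague $(f;4)$-cycle swap, whereas the paper's corresponding case closes cleanly exactly because both $mp(H_{d1})=n$ and $mp(H_{d2})=n$ strictly exceed $|F_{d1}|,|F_{d2}|$. So the overall plan is recognizable, but as written the lower-bound induction does not go through; adopting the paper's $H_{j1}/H_{j2}$ split (or strengthening the induction to carry the optimal-set characterization) is needed.
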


\begin{proof}
By Lemma~\ref{lem Bipartite partition}, $mp(G)\leqslant \min\{ d_{G}(v)~|~v\in V_{o}(G)\}=n+1$. Next by induction on $n$, we will show that $mp(G)>n$, i.e. for any $F\subseteq E(G)$ with $|F|\leqslant n$, $G-F$ has an almost perfect matching.

If $n=1$, $G$ is the path $P_{k_0}$ where $k_0\geqslant 3$ is odd. Then for any $F\subset E(G)$ with $|F|\leqslant 1$, it is easy to find an almost perfect matching in $G-F$.

Now suppose $n\geqslant 2$. Let $F\subseteq E(G)$ such that $|F|\leqslant n$. First, we assume $G-F$ has an isolated vertex $u$. So $d_{G}(u)\geqslant \delta(G)=n$ and $d_{G}(u)\leqslant |F|\leqslant n$. Then $|F|=d_{G}(u)=n$, all the edges in $F$ are incident with $u$ and $u\in V_{\delta}(G)\subseteq V_{allEven}(G)$. By Lemma~\ref{lem Alleven apm}, $G$ has an almost perfect matching $M_{u}$ which uncovers $u$. Then $M_{u}$ is also an almost perfect matching of $G-F$.

Now we assume that $G-F$ has no isolated vertex. For each $0\leqslant j\leqslant n-1$, let $H_{j1}=G-G_j[k_j-1]$, $H_{j2}=G_j[k_j-1]$, $F_{j1}=F\cap E(H_{j1})$ and $F_{j2}=F\cap E(H_{j2})$. Then $H_{j1}$ is an even order $n$-grid graph, $H_{j2}$ is an odd order $(n-1)$-grid graph, the sets $V(H_{j1})$ and $V(H_{j2})$ form a partition of $V(G)$, and the sets $F_{j1}$, $F_j'=F\cap E_j^{k_j-2,k_j-1}(G)$ and $F_{j2}$ form a partition of $F$. These notations are shown in Figure~\ref{fig: oddordermpnumber}. By Lemma~\ref{lem mp number even order}, we have $mp(H_{j1})=n$. By induction hypothesis, we have $mp(H_{j2})=n$.

\newsavebox{\oddordermpnumber}
\savebox{\oddordermpnumber}{
\setlength{\unitlength}{1em}
\begin{picture}(22,11)\label{pic oddordermpnumber}

\put(2.5,6){\oval(3,6)}
\put(7.5,6){\oval(3,6)}
\put(13.5,6){\oval(3,6)}
\put(19.5,6){\oval(3,6)}

\put(0.5,1.5){\line(1,0){15}}
\put(0.5,1.5){\line(0,1){8}}
\put(0.5,9.5){\line(1,0){15}}
\put(15.5,1.5){\line(0,1){8}}

\put(10,6){$\ldots$}

\put(1.5,2){$G_j[0]$}
\put(6.5,2){$G_j[1]$}
\put(11,2){$G_j[k_j-2]$}
\put(17.5,2){$G_j[k_j-1]$}

\put(6.5,0.5){$H_{j1}$}
\put(18.5,0.5){$H_{j2}$}

\put(6.5,10){$F_{j1}$}
\put(16,10){$F_j'$}
\put(18.5,10){$F_{j2}$}
\end{picture}}

\begin{figure}[ht]
\centering
\usebox{\oddordermpnumber}
\caption{The partition of the odd order $n$-grid graph $G$ at position $j$.}
\label{fig: oddordermpnumber}
\end{figure}

Suppose $F_{j2}\neq \emptyset$ for all $0\leqslant j\leqslant n-1$. Let $f\in F$ and let $d$ be the position of $f$. Then $f\notin F_{d2}$, $F_{d2}\neq \emptyset$ and $F_{d1}\cup F_{d2}\subseteq F$. So $|F_{d1}|<n=mp(H_{d1})$ and $|F_{d2}|<n=mp(H_{d2})$. For $s=1$ or $2$, $F_{ds}$ is not a matching preclusion set of $H_{ds}$ and $H_{ds}-F_{ds}$ has a perfect matching $M_{ds}$. Thus $M=M_{d1}\cup M_{d2}$ is an almost perfect matching of $G-F$.

The rest case is that there exists an $i\in \{0,1,\ldots,n-1\}$ such that $F_{i2}=\emptyset$. When $|F_{i1}|<n=mp(H_{i1})$, the set $F_{i1}$ is not a matching preclusion set of $H_{i1}$. So $H_{i1}-F_{i1}$ has a perfect matching $M_{i1}'$. Let $M_{i2}'$ be an almost perfect matching of $H_{i2}=H_{i2}-F_{i2}$. Then $M_{i1}'\cup M_{i2}'$ is an almost perfect matching of $G-F$.

When $|F_{i1}|=n$, we have $F=F_{i1}\subseteq E(H_{i1})$. If $H_{i1}-F$ has an isolated vertex $v$, then $d_{H_{i1}}(v)=n$. Since $G-F$ has no isolated vertex, the vertex $v\in G_i[k_i-2]$, $d_{G_i[k_i-2]}(v)=n-1$. So $v\in V_\delta (G_i[k_i-2])\subseteq V_{allEven}(G_i[k_i-2])$. By Lemma~\ref{lem Alleven apm}, $G_i[k_i-2]$ has an almost perfect matching $M_v$ which uncovers $v$. Let $v^+\in V(H_{i2})$ such that $vv^+\in E_i^{k_i-2,k_i-1}(G)$. Then $vv^+\notin F$ and $v^+\in V_{allEven}(H_{i2})$. By Lemma~\ref{lem Alleven apm}, $H_{i2}$ has an almost perfect matching $M_v^+$ such that $M_v^+$ uncovers $v^+$. Let $M_v^-$ be any almost perfect matching of $H_{i1}-G_i[k_i-2]$. Then $M_v^-\cup M_v\cup M_v^+\cup \{vv^+\}$ is an almost perfect matching of $G-F$.

Now we assume $H_{i1}-F$ has no isolated vertex. When $n\geqslant 3$, $F$ is not a matching preclusion set of $H_{i}$ by Theorem~\ref{thm one even PM set}. So $G-F$ has an almost perfect matching. When $n=2$, without loss of generality, we may assume $i=0$. Then $G$ is a $(k_0,k_1;2)$-grid graph and $H_{01}$ is an even order $(k_0-1,k_1;2)$-grid graph where $k_0\geqslant 3$ and $k_1\geqslant 3$ are odd integers. If $F$ is not a matching preclusion set of $H_{01}$, then $G-F$ has an almost perfect matching. Suppose $F$ is a matching preclusion set of $H_{01}$. Since $H_{01}-F$ has no isolated vertex, by Lemma~\ref{lem 2-grid PM set}, we have $k_1=3$ and the endpoints of edges in $F$ are $(u_0,0)$, $(u_0+1,0)$, $(u_0,2)$ and $(u_0+1,2)$ for some even number $u_0\in \{0, 2,\ldots, k_0-3\}$. Then $F\subseteq E_0^{u_0,u_0+1}(G)$. Let $g\in E(G_0[0])$. Then $\{g\}\cup E_0^{1,2}(G)\cup E_0^{3,4}(G)\cup \cdots \cup E_0^{k_0-2,k_0-1}(G)$ is an almost perfect matching of $G-F$.
\end{proof}

Lemma~\ref{lem mp number odd order n>=3}, in Appendix~\ref{appendix sec mp number odd order n>=2}, gives another proof of Theorem~\ref{thm odd PM number} when $n\geqslant 3$ without using results of matching preclusion for even order $n$-grid graphs.

Next we will discuss the structure of the optimal matching preclusion set of  odd order $n$-grid graphs. The $(k_0;1)$-grid graph $G$ is a path $P_{k_0}$ with  $k_0$  odd. Then $mp(G)=2$. Let $F\subseteq E(G)$ be an optimal matching preclusion set. Then $|F|=2$ and $P_{k_0}-F$ is isomorphic to the disjoint union $P_s\cup P_t\cup P_q$ of three paths where the positive numbers $s,t,q$ are odd and $s+t+q=k_0$.

\begin{theorem}\label{thm odd PM set}
Let $n\geqslant 2$ and let $G$ be an odd order $(k_0,k_1,\ldots,k_{n-1};n)$-grid graph. Then an edge set $F\subseteq E(G)$ is an optimal matching preclusion set of $G$ if and only if $F$ consists of edges incident with a vertex $u\in V_{o}(G)$ with $d_G(u)=n+1$.
\end{theorem}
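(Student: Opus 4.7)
The forward implication is immediate. Suppose $u\in V_o(G)$ satisfies $d_G(u)=n+1$ and $F$ is the set of the $n+1$ edges incident with $u$. Since $mp(G)=n+1$ by Theorem~\ref{thm odd PM number}, we only need to verify that $G-F$ has no almost perfect matching. In $G-F$ the vertex $u$ is isolated, so an almost perfect matching of $G-F$ would have to uncover $u$ and hence restrict to a perfect matching of $G-u$; but by Lemma~\ref{lem Bipartite partition}, no such perfect matching exists. Hence $F$ is an optimal matching preclusion set.

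For the reverse implication, let $F$ be an optimal matching preclusion set with $|F|=n+1$. First consider the case where $G-F$ contains an isolated vertex $u$. Since all edges at $u$ lie in $F$, we have $n=\delta(G)\leq d_G(u)\leq|F|=n+1$. If $d_G(u)=n$, then $u\in V_\delta(G)\subseteq V_{allEven}(G)$, and $F$ consists of the $n$ edges at $u$ together with one additional edge $f$; Lemma~\ref{lem mp(G-u) for u in VallEvenG odd order} gives $mp(G-u)=n\geq 2$, so $(G-u)-\{f\}$ has a perfect matching which is an almost perfect matching of $G-F$, a contradiction. Therefore $d_G(u)=n+1$ and $F$ equals precisely the edge-set at $u$. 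It remains to rule out $u\in V_e(G)$: if $u\in V_e(G)$, Lemma~\ref{lem Sumeven apm} produces an almost perfect matching $M_u$ of $G$ uncovering $u$, and since every edge of $F$ is incident with $u$ while $M_u$ avoids all such edges, $M_u\subseteq E(G)\setminus F$ is an almost perfect matching of $G-F$, a contradiction. Thus $u\in V_o(G)$, as desired.

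The remaining case is when $G-F$ has no isolated vertex; here the plan is to derive a contradiction by exhibiting an almost perfect matching of $G-F$. Following the partition strategy used in the proof of Theorem~\ref{thm odd PM number}, for each position $d$ and each $j\in\{0,k_d-1\}$ one has a decomposition $G=H_{d,j}\cup E_{d,j}'\cup G_d[j]$, where $H_{d,j}=G-G_d[j]$ is an even order $n$-grid graph, $G_d[j]$ is an odd order $(n-1)$-grid graph, and $E_{d,j}'$ is the connecting edge set. By Lemma~\ref{lem mp number even order} and Theorem~\ref{thm odd PM number}, $mp(H_{d,j})=mp(G_d[j])=n$. If some choice of $(d,j)$ yields fewer than $n$ edges of $F$ on each side, a perfect matching of $H_{d,j}-F$ combined with one of $G_d[j]-F$ gives the sought almost perfect matching, a contradiction. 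Otherwise, for every such $(d,j)$ at least $n$ of the $n+1$ edges of $F$ lie entirely on one side of the partition.

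The main difficulty lies in this concentrated scenario. When $F$ sits (up to one edge) inside an even order $n$-grid piece, I plan to invoke the super matched property of that piece (Theorem~\ref{thm one even PM set} and Theorem~\ref{thm even PM set and number}) to locate a single vertex $v$ around which almost all of $F$ clusters, and then to apply $(F,M)$-nice cycle symmetric differences (Corollary~\ref{cor not mp set F}) to an appropriately chosen perfect matching $M$ in order either to force $v$ to be isolated in $G-F$, contradicting the case assumption, or to absorb the extra edge of $F$ through a local modification of the matching. The base case $n=2$ requires a separate argument, since the super matched property is unavailable there; for that case I plan to use the explicit classification of optimal matching preclusion sets of even order $2$-grid graphs in Lemma~\ref{lem 2-grid PM set} together with the construction of almost perfect matchings uncovering a prescribed $V_e$-vertex in Lemma~\ref{lem Sumeven apm}.
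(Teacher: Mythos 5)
Your easy direction and your treatment of the case where $G-F$ has an isolated vertex are correct (the degree-$n$ subcase via Lemma~\ref{lem mp(G-u) for u in VallEvenG odd order} and the exclusion of $u\in V_e(G)$ via Lemma~\ref{lem Sumeven apm} match what the paper does). The genuine gap is the remaining case, where $G-F$ has no isolated vertex: that is the heart of the theorem, and what you give there is a plan, not a proof. You never actually carry out the ``concentrated'' scenario. Concretely: (i) with your end-cut partition ($j\in\{0,k_d-1\}$), concentration can occur in the odd order $(n-1)$-grid piece $G_d[j]$ just as well as in the even piece, and you do not address that side at all; the paper instead cuts along $E_d^{j,j+1}(G)$ across a chosen fault edge $f\in F$, which guarantees $f$ lies in neither side and makes the $(f;4)$-cycle swaps available, and its analysis of the odd-side case (where $|F_1|=n$, $j=0$, and one must separately handle $d_{H_1}(x)=n-1$ versus $d_{H_1}(x)=n$ using Lemma~\ref{lem mp(G-u) for u in VallEvenG odd order}) has no counterpart in your sketch. (ii) In the even-side case, invoking super matchedness only tells you all of $F\cap E(H)$ sits at one vertex $v$ of degree $n$ in the piece; the decisive step is to show that either $v$ is in the layer adjacent to the cut with the remaining fault edge also at $v$ (forcing $v$ isolated in $G-F$, contradicting your case hypothesis), or else an almost perfect matching of $G-F$ can be built -- and that construction (ruling out $v$ in the far layer, ruling out $v$ different from the endpoint of $f$, each via an explicit $(F,M_0)$-nice $(f;4)$-cycle together with Lemma~\ref{lem Alleven apm}) is exactly what you defer with ``I plan to \ldots\ either \ldots\ or absorb the extra edge through a local modification.'' (iii) The $n=2$ case, where the even piece can have the nontrivial optimal matching preclusion sets of Lemma~\ref{lem 2-grid PM set}, is likewise only announced. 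Until these three items are written out, the reverse implication is not established.
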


\begin{proof}
Let $F\subseteq E(G)$ be an optimal matching preclusion set of $G$. Then $|F|=n+1$ and $G-F$ has no almost perfect matching. Let $f=xy\in F$, let $d$ be the position of $f$ and let $x\in G_{d}[j]$ and $y\in G_{d}[j+1]$ for some $j\in \{0,1,\ldots,k_{d}-2\}$. Let $H_{1}$ and $H_{2}$ be the connected components of $G-E_{d}^{j,j+1}(G)$ such that $x\in V(H_{1})$ and $y\in V(H_{2})$. Then one of $H_{1}$ and $H_{2}$ is of even order and the other is of odd order. Without loss of generality, we may assume $H_{1}$ is of odd order. Then $j \leqslant k_{d}-3$ is even and $H_{2}$ is of even order. Then $mp(H_2)=n$ by Lemma~\ref{lem mp number even order}. Let $F_1=F\cap E(H_1)$ and $F_2=F\cap E(H_2)$. Then $F_1\cap F_2=\emptyset$, $F_1\cup F_2 \subseteq F\setminus \{f\}$, and $|F_{1}|, |F_{2}|\leqslant n$. The above partition at position $d$ is shown in Figure~\ref{fig: oddordermpset}. Let $M_{0}=E_{d}^{0,1}(G)\cup E_{d}^{2,3}(G)\cup \cdots \cup E_{d}^{k_{d}-3,k_{d}-2}(G)$. Then $M_0$ is a matching of $G$ and $f\in M_0$.

\newsavebox{\oddordermpset}
\savebox{\oddordermpset}{
\setlength{\unitlength}{1em}
\begin{picture}(33,11)\label{pic oddordermpset}

\put(2.5,6){\oval(3,6)}
\put(7.5,6){\oval(3,6)}
\put(10,6){$\ldots$}
\put(13.5,6){\oval(3,6)}
\put(19.5,6){\oval(3,6)}
\put(22,6){$\ldots$}
\put(25.5,6){\oval(3,6)}
\put(30.5,6){\oval(3,6)}

\put(0.5,1.5){\line(1,0){15}}
\put(0.5,1.5){\line(0,1){8}}
\put(0.5,9.5){\line(1,0){15}}
\put(15.5,1.5){\line(0,1){8}}

\put(17.5,1.5){\line(1,0){15}}
\put(17.5,1.5){\line(0,1){8}}
\put(17.5,9.5){\line(1,0){15}}
\put(32.5,1.5){\line(0,1){8}}

\put(1.5,2){$G_d[0]$}
\put(6.5,2){$G_d[1]$}
\put(12.5,2){$G_d[j]$}

\put(18,2){$G_d[j+1]$}
\put(23,2){$G_d[k_d-2]$}
\put(28,2){$G_d[k_d-1]$}

\put(13.5,6){\circle*{0.3}}
\put(19.5,6){\circle*{0.3}}

\put(13.5,6){\line(1,0){6}}

\put(13.25,5){$x$}
\put(19.25,5){$y$}
\put(16,6.5){$f$}

\put(8,0.5){$H_1$}
\put(23,0.5){$H_2$}

\put(8,10){$F_1$}
\put(23,10){$F_2$}
\end{picture}}

\begin{figure}[ht]
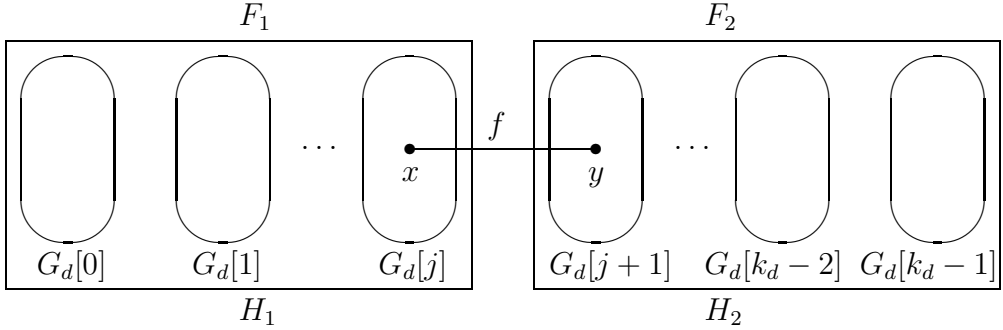

\centering
\usebox{\oddordermpset}
\caption{The partition of the odd order $n$-grid graph $G$ at position $d$.}
\label{fig: oddordermpset}
\end{figure}

If $F_{2}$ is a matching preclusion set of $H_{2}$, then $|F_{2}|\geqslant n$. So $|F_{2}|=n$. By Theorem~\ref{thm one even PM set}, $H_{2}-F_{2}$ has an isolated vertex $v$, all the $F_2$-fault edges are incident with $v$ and $d_{H_{2}}(v)=n$. Then $v$ is a vertex of $G_{d}[j+1]$ or $G_{d}[k_{d}-1]$. If $v\in V(G_{d}[k_{d}-1])$. We have $d_{G}(v)=d_{H_{2}}(v)=n$, $v\in V_{\delta}(G)\subseteq V_{allEven}(G)$ and $v\in V_{allEven}(G_{d}[k_{d}-1])$. By Lemma~\ref{lem Alleven apm}, $G_{d}[k_{d}-1]$ has an almost perfect matching $M_{v}$ which uncovers $v$. There exists an $(f;4)$-cycle $C_f^1$ such that $C_f^1$ is an $(F,M_{0})$-nice cycle. Then $(M_{0}\Delta C_{f}^{1})\cup M_{v}$ is an almost perfect matching of $G-F$. So $v\in V(G_{d}[j+1])$. If $y\neq v$, then there exists an $(f;4)$-cycle $C_f^2$ such that $C_f^2$ is an $(F,M_{0})$-nice cycle. Let $M_{v}^{k_{d}-1}$ be an almost perfect matching of $G_{d}[k_{d}-1]$. Then $(M_{0}\Delta C_{f}^{2})\cup M_{v}^{k_{d}-1}$ is an almost perfect matching of $G-F$. So $y=v$ and all the $F$-fault edges are incident with $v$. Since $d_{H_{2}}(v)=n$, the vertex degree $d_{G}(v) = n+1=|F|$. Hence $v$ is an isolated vertex of $G-F$. Suppose $v\in V_{e}(G)$. Then by Lemma~\ref{lem Sumeven apm}, $G$ has an almost perfect matching $M'_v$ which uncovers $v$. So $M'_v$ is also an almost perfect matching of $G-F$. Thus $v\in V_{o}(G)$.

If $F_{2}$ is not a matching preclusion set of $H_{2}$, then $H_{2}-F_{2}$ has a perfect matching $M_{2}$. If $F_{1}$ is not a matching preclusion set of $H_{1}$. Then $H_{1}-F_{1}$ has an almost perfect matching $M_{1}$. So $M_{1}\cup M_{2}$ is an almost perfect matching of $G-F$. This is a contradiction. So $F_{1}$ is a matching preclusion set of $H_{1}$. The matching preclusion number $mp(H_{1})=n$ or $n+1$ by Theorem~\ref{thm odd PM number}. Then $|F_{1}|\geqslant mp(H_{1})\geqslant n$. Since $|F_{1}|\leqslant n$, we have $|F_{1}|=mp(H_{1}) = n$. Then $H_{1}$ is an odd order $(n-1)$-grid graph, $j=0$, and $F_{2}=\emptyset$. Let $M^{k_d-1}$ be an almost perfect matching of $G_{d}[k_{d}-1]$. If $x$ is not an isolated vertex in $H_1-F_1$, then there exists an $(f,d,0;4)$-cycle $C_f^3$ such that $C_f^3$ is an $(F,M_0)$-nice cycle. So $(M_{0}\Delta C_f^3)\cup M^{k_d-1}$ is an almost perfect matching of $G-F$. Hence $x$ is an isolated vertex in $H_1-F_1$. The vertex degree satisfies $n-1=\delta(H_1)\leqslant d_{H_1}(x)\leqslant |F_1|=n$. If $d_{H_1}(x)=n-1$, then $d_G(x)=n$ and there exists exactly one edge $g\in F_1$ which is not incident with $x$. Since $x\in V_\delta(G)\subseteq V_{allEven}(G)$ and $|\{g\}|=1<n$, the set $\{g\}$ is not a matching preclusion set of $G-x$ by Lemma~\ref{lem mp(G-u) for u in VallEvenG odd order}. Then $(G-x)-g$ has a perfect matching $M^x$. This means $M^x$ is an almost perfect matching of $G-F$ which uncovers $x$. Thus $d_{H_1}(x)=n$. So $d_G(x)=n+1$, all the $F$-fault edges are incident with $x$, and $x$ is an isolated vertex of $G-F$. By the same argument as in the last paragraph, we have $x\in V_{o}(G)$.

Conversely, let $F\subseteq E(G)$ such that $|F|=n+1$ and $G-F$ has an isolated vertex $u\in V_{o}(G)$ with $d_G(u)=n+1$. Then $F$ consists of edges incident with $u$. By Lemma~\ref{lem Bipartite partition}, $F$ is an optimal matching preclusion set.
\end{proof}

Theorems~\ref{thm odd PM number} and~\ref{thm odd PM set} imply the odd order part of our main theorem.



\appendix

\section{Matching preclusion number of odd order $n$-grid graph where $n\geqslant 2$}\label{appendix sec mp number odd order n>=2}

\begin{lemma}\label{lem 2-grid mp number odd order}
Let $G$ be an odd order $(k_0,k_1;2)$-grid graph. Then $mp(G)=3$.
\end{lemma}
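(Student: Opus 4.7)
I would prove the two bounds separately. For the upper bound, the vertex $u=(0,1)$ has coordinate sum $1$ so $u\in V_o(G)$, and because $k_0,k_1\geq 3$ force $u$ to be a non-corner boundary vertex, $d_G(u)=3$. By Lemma~\ref{lem Bipartite partition} (applied to the isolated vertex $u$ in $G$ after removing its three incident edges, noting that $G-u$ has no perfect matching), the three edges incident with $u$ form a matching preclusion set, giving $mp(G)\leq 3$.

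For the lower bound I would take $F\subseteq E(G)$ with $|F|\leq 2$ and construct an almost perfect matching of $G-F$. The construction hinges on the four ``skeleton'' matchings
\begin{align*}
M_e^{0} &= E_0^{0,1}(G)\cup E_0^{2,3}(G)\cup\cdots\cup E_0^{k_0-3,k_0-2}(G),\\
M_e^{1} &= E_0^{1,2}(G)\cup E_0^{3,4}(G)\cup\cdots\cup E_0^{k_0-2,k_0-1}(G),\\
M_f^{0} &= E_1^{0,1}(G)\cup E_1^{2,3}(G)\cup\cdots\cup E_1^{k_1-3,k_1-2}(G),\\
M_f^{1} &= E_1^{1,2}(G)\cup E_1^{3,4}(G)\cup\cdots\cup E_1^{k_1-2,k_1-1}(G),
\end{align*}
which uncover the rows $G_0[k_0-1]$, $G_0[0]$ and the columns $G_1[k_1-1]$, $G_1[0]$ respectively. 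Each skeleton extends to an APM of $G$ by adjoining an APM of its uncovered row or column, which is an odd path. The key observation is that $M_e^{*}$ contains only position-$0$ edges while $M_f^{*}$ contains only position-$1$ edges, so every edge of $F$ lies in at most one of the four skeletons.

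I would then dispatch the case $|F|\leq 2$ by splitting on positions. When both edges of $F$ have position $1$, neither lies in any $M_e^{*}$; since $F$ touches at most two rows, at least one of the uncovered rows $G_0[0]$, $G_0[k_0-1]$ contains at most one edge of $F$, and removing at most one edge from an odd path still leaves an APM, so that skeleton extends to an APM of $G-F$. The case where both edges of $F$ have position $0$ is symmetric via the $M_f^{*}$ family. In the mixed case, $f_1$ (of position $1$) lies in exactly one of $M_f^{0},M_f^{1}$; I would use the other, whose uncovered column can contain only the single position-$0$ edge $f_0$, and again complete with an APM of an odd path missing at most one edge. The subcases with $|F|\leq 1$ are handled by the same scheme with even less care.

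The main obstacle to watch is the completion step: the uncovered path could in principle inherit both edges of $F$ and then fail to admit an APM---already $P_3$ with both edges deleted becomes three isolated vertices. The case analysis above avoids this by always deploying a pair of skeletons whose uncovered rows (respectively columns) are disjoint, so at least one of them loses at most one edge of $F$. Since every step uses only Lemma~\ref{lem Bipartite partition} together with elementary facts about paths, the argument is independent of the even-order matching preclusion results, as required for the appendix.
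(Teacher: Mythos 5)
Your proof is correct, but it takes a genuinely different route from the paper's. The paper proves this lemma by running the $n=2$ instance of the scheme of Theorem~\ref{thm odd PM number} (see Figure~\ref{fig: oddordertable}): partition $G$ into the even-order subgrid $H_{j1}$ and the odd column $G_j[k_j-1]$, and invoke the even-order results, namely $mp(H_{j1})=2$ from Lemma~\ref{lem mp number even order} and the full characterization of optimal matching preclusion sets of even-order $2$-grids from Lemma~\ref{lem 2-grid PM set} (needed precisely to handle the nontrivial sets that arise when $k_1=3$). You instead give a self-contained construction: the four ``skeleton'' matchings $M_e^{0},M_e^{1},M_f^{0},M_f^{1}$, the observation that each edge of $F$ lies in exactly one of them (by position), and the elementary fact that an odd path minus at most one edge still has an almost perfect matching; your case split on the positions of the (at most two) fault edges is exhaustive, the chosen skeleton always avoids $F$, and its uncovered odd path inherits at most one fault edge, so the completion step never breaks. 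Your upper bound via the degree-$3$ vertex $(0,1)\in V_o(G)$ and Lemma~\ref{lem Bipartite partition} matches the paper's. What the paper's route buys is brevity by reusing machinery already proved (at the cost of leaning on the rather intricate Lemma~\ref{lem 2-grid PM set}); what yours buys is independence from the even-order matching preclusion results and a more transparent argument. One small correction of framing: the appendix only claims such independence for the $n\geqslant 3$ lemma (Lemma~\ref{lem mp number odd order n>=3}); the paper's own proof of the present $n=2$ lemma does use the even-order results, so your closing remark ``as required for the appendix'' overstates the requirement, though it does not affect the validity of your argument.
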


\begin{proof}
We follow the summary of this proof as that of Theorem~\ref{thm odd PM number} (see Figure~\ref{fig: oddordertable}). By using Lemmas~\ref{lem mp number even order} and~\ref{lem 2-grid PM set}, we can get this result.
\end{proof}

\begin{lemma}\label{lem odd order f not in apm Mf}
Let $G$ be an odd order $(k_0,k_1,\ldots,k_{n-1};n)$-grid graph and let $f\in E(G)$. Then there exists an almost perfect matching $M_f$ such that $f\not\in M_f$.
\end{lemma}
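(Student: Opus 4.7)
The plan is to observe that the graph is bipartite, so $f$ has exactly one endpoint in the larger color class $V_e(G)$, and then use Lemma~\ref{lem Sumeven apm} directly on that endpoint to produce an almost perfect matching that uncovers it, which in particular must avoid $f$.

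More precisely, let $f = xy \in E(G)$. By Lemma~\ref{lem Bipartite partition}, $G$ is bipartite with bipartition $V(G) = V_e(G) \cup V_o(G)$, so exactly one of $x, y$ lies in $V_e(G)$; call this endpoint $y$ without loss of generality (so $x \in V_o(G)$). Then by Lemma~\ref{lem Sumeven apm}, $G$ has an almost perfect matching $M_y$ such that $M_y$ uncovers $y$. Since no edge of $M_y$ is incident with $y$, in particular $f = xy \notin M_y$. Setting $M_f := M_y$ gives the required almost perfect matching.

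There is essentially no obstacle here: the result is an immediate corollary of Lemmas~\ref{lem Sumeven apm} and~\ref{lem Bipartite partition}. The only point worth checking is that every edge of $G$ does indeed have exactly one endpoint in $V_e(G)$, which is guaranteed by the bipartite partition in Lemma~\ref{lem Bipartite partition}. Thus the proof is a one-line application of the existing machinery and needs no case analysis, induction, or new construction.
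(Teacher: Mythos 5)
Your proof is correct, but it takes a different route from the paper. The paper never invokes bipartiteness here: it lets $d$ be the position of $f=u_ju_{j+1}$, deletes the whole layer of edges $E_d^{j,j+1}(G)$ to split $G$ into two components, one of even order and one of odd order (according to the parity of $j$), and then glues a perfect matching of the even-order component to an almost perfect matching of the odd-order component; since $f$ lies in the deleted layer, the resulting almost perfect matching avoids it. That argument only needs the elementary existence facts (even-order grid graphs have perfect matchings, and Lemma~\ref{lem Alleven apm} for the odd-order piece), so it is self-contained with light machinery, and it keeps the matching disjoint from all of $E_d^{j,j+1}(G)$, which matches the style of the other constructions in the appendix. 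Your argument instead uses the bipartition $(V_e(G),V_o(G))$ of Lemma~\ref{lem Bipartite partition} to locate the endpoint $y\in V_e(G)$ of $f$ and applies Lemma~\ref{lem Sumeven apm} to get an almost perfect matching uncovering $y$; this is shorter, avoids the case split on the parity of $j$, and in fact proves something slightly stronger (the matching misses every edge incident with the $V_e$-endpoint, not just $f$), at the cost of leaning on the more involved induction behind Lemma~\ref{lem Sumeven apm}. Since that lemma precedes this one and does not depend on it, there is no circularity, and your one-line derivation is a legitimate alternative proof.
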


\begin{proof}
Let $d$ be the position of $f=u_ju_{j+1}$ where $u_j\in G_d[j]$ and $u_{j+1}\in G_d[j+1]$. Let $H_1$ and $H_2$ be the connected components of $H=G-E_d^{j,j+1}(G)$ where $u_j\in V(H_1)$ and $u_{j+1}\in V(H_2)$. If $j$ is even, then $H_1$ is an even order grid graph and $H_2$ is an odd order grid graph. Let $M_1$ be a perfect matching of $H_1$ and let $M_2$ be an almost perfect matching of $H_2$. Then $M=M_1\cup M_2$ is an almost perfect matching of $G$ such that $f\not\in M$. If $j$ is odd, then $H_1$ is an odd order grid graph and $H_2$ is an even order grid graph. Let $M_1'$ be an almost perfect matching of $H_1$ and let $M_2'$ be a perfect matching of $H_2$. Then $M'=M_1'\cup M_2'$ is an almost perfect matching of $G$ such that $f\not\in M'$.
\end{proof}

Let $n\geqslant 3$. Here we give a proof of matching preclusion number of odd order $n$-grid graph without using the matching preclusion results for even order grid graphs.

\begin{lemma}\label{lem mp number odd order n>=3}
Let $n\geqslant 3$ and let $G$ be an odd order $(k_0,k_1,\ldots,k_{n-1};n)$-grid graph. Then $mp(G)=n+1$.
\end{lemma}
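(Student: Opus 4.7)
Plan:

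I would proceed by induction on $n$, with the base case $n = 2$ supplied by Lemma~\ref{lem 2-grid mp number odd order}. The upper bound $mp(G) \leq n+1$ is immediate from Lemma~\ref{lem Bipartite partition}: every $u \in V_o(G)$ has degree at least $n + 1$, and the edges incident to a degree-$(n+1)$ vertex form a matching preclusion set. All the work lies in the lower bound $mp(G) > n$, namely in showing that every $F \subseteq E(G)$ with $|F| \leq n$ fails to be a matching preclusion set.

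Fix such an $F$. If $G - F$ has an isolated vertex $u$, then $d_G(u) \leq |F| \leq n$ forces $u \in V_\delta(G) \subseteq V_{allEven}(G)$ and $|F| = n$ with all $F$-edges incident to $u$; Lemma~\ref{lem Alleven apm} then yields an almost perfect matching of $G$ uncovering $u$, which survives the deletion of $F$. Henceforth assume $G - F$ has no isolated vertex. The two key inductive ingredients are: (i) the induction hypothesis $mp(G_d[j]) = n$ for each odd-order $(n-1)$-grid slice $G_d[j]$, and (ii) Lemma~\ref{lem mp(G-u) for u in VallEvenG odd order}, which strengthens (i) to $mp(G_d[j] - u) = n - 1$ for any $u \in V_{allEven}(G_d[j])$. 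Together they supply vertex-prescribed almost perfect matchings of $G_d[j] - F_j$ whenever $|F_j| \leq n - 2$.

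The main construction is: pick a position $d$, partition $G = G_d[0] \cup \cdots \cup G_d[k_d - 1] \cup E_d(G)$ with $F_j = F \cap E(G_d[j])$ and $F'_d = F \cap E_d(G)$, then choose a column $P = v_0 v_1 \cdots v_{k_d - 1}$ of corresponding vertices with $v_j \in V_{allEven}(G_d[j])$ and an even index $j^* \in \{0, 2, \ldots, k_d - 1\}$. Gluing a perfect matching $M_j$ of $G_d[j] - v_j - F_j$ (guaranteed by (ii) when $|F_j| \leq n - 2$) to the path matching on $P$ that omits the two edges at $v_{j^*}$ produces an almost perfect matching of $G - F$ uncovering $v_{j^*}$, provided this path matching avoids $F'_d$. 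Because there are at least $\prod_{i \ne d} \frac{k_i + 1}{2} \geq 2^{n-1} \geq 4$ available columns and at least $\frac{k_d + 1}{2} \geq 2$ choices of $j^*$ while $|F| \leq n$, a pigeonhole argument produces a compatible pair $(P, j^*)$ whenever every $|F_j| \leq n - 2$.

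The main obstacle is the exceptional case where some $|F_{j_0}| \in \{n-1, n\}$, so the column construction breaks at slice $j_0$. Here $F$ is concentrated in a single slice, leaving $E_d(G)$ and the remaining slices $F$-free. I would first try to sidestep this by switching to a different partition position $d'$: for $n \geq 3$, having $F$ concentrated in one slice for \emph{every} choice of $d$ is very restrictive, so generically a better $d'$ exists. When no such $d'$ is available, I would apply an $(F, M)$-nice-cycle reduction in the spirit of Lemma~\ref{lem disjoint nice cycles no F}: start from an almost perfect matching $M$ of $G$ provided by Lemma~\ref{lem odd order f not in apm Mf} that avoids a prescribed $F$-fault edge, and flip $(f, d, j; 4)$-cycles, which exist because $n - 1 \geq 2$ so every slice has minimum degree at least $2$, purging the remaining $F$-faults one at a time. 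The hardest step will be verifying that this purging terminates, which will require a careful case split mirroring the structure of Theorem~\ref{thm odd PM number} but without any appeal to the even-order matching preclusion results.
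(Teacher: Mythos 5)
Your upper bound, the isolated-vertex reduction, and the column-path construction when every slice satisfies $|F_j|\leqslant n-2$ all match the paper's argument (via Lemma~\ref{lem Bipartite partition}, Lemma~\ref{lem Alleven apm} and Lemma~\ref{lem mp(G-u) for u in VallEvenG odd order}), and that part of your plan is sound. The gap is the concentrated case $|F_{j_0}|\geqslant n-1$, which you leave to two devices that do not work as stated. First, the hope that a better partition position $d'$ ``generically'' exists fails on exactly the configurations that matter: take $F$ to be $n$ of the $n+1$ edges incident with a vertex $u$ of degree $n+1$ (so $u$ has two neighbours at one coordinate and one at each other), omitting one of the two edges at the doubled coordinate. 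Then $G-F$ has no isolated vertex, yet for \emph{every} position $d$ exactly one fault lies in $E_d(G)$ and the remaining $n-1$ faults lie inside the slice containing $u$, so every partition is ``bad'' and no switch of position returns you to the easy case. Second, the fallback of purging faults by flipping $(f,d,j;4)$-cycles is precisely where the difficulty sits: such a cycle is an $(F,M)$-nice cycle only when it is $M$-alternating and meets $F$ only inside $M$, and in the concentrated configurations the endpoint of $f$ inside the heavy slice may have all but one (or all) of its slice-edges in $F$, so $f$ can fail to lie on any usable cycle --- this is exactly the ``bad-fault edge'' phenomenon the paper names after Corollary~\ref{cor not mp set F}. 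You concede that termination/existence of the purging is unverified, but that is the entire content of the lemma in this case.

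The paper closes this case not by purging but by explicit constructions, and your proof needs an equivalent: if $F\subseteq E(G_0[j])$, absorb slice $j$ by rung edges, i.e.\ use $M_1=E_0^{1,2}(G)\cup E_0^{3,4}(G)\cup\cdots\cup E_0^{k_0-2,k_0-1}(G)$ (or its mirror $M_2$) together with an almost perfect matching of the end slice $G_0[0]$ (or $G_0[k_0-1]$); if $|F_j|=n-1$ and the leftover fault $f$ lies in another slice $G_0[k]$, match slice $j$ to a neighbouring slice by $E_0^{j-1,j}(G)$ or $E_0^{j,j+1}(G)$ and apply Lemma~\ref{lem odd order f not in apm Mf} to the odd-order component containing $f$; if the leftover fault $f$ lies in $E_0(G)$, split on whether $G_0[j]-F_j$ has an isolated vertex --- if it does, that vertex lies in $V_{allEven}(G_0[j])$, so run the column path through it and use Lemmas~\ref{lem Alleven apm} and~\ref{lem odd order f not in apm Mf}; if it does not, take $M_1$ or $M_2$ and flip a single $(f;4)$-cycle avoiding $F_j$, which exists because the endpoint of $f$ in slice $j$ keeps a neighbour outside $F_j$ and the adjacent slice carries no faults. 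Without this (or an equally concrete) case analysis, your proposal does not yet establish the lower bound $mp(G)>n$.
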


\begin{proof}
By Lemma~\ref{lem Bipartite partition}, $mp(G)\leqslant \min\{ d_{G}(v)~|~v\in V_{o}(G)\}=n+1$. Next we will show that $mp(G)>n$, i.e. for any $F\subseteq E(G)$ with $|F|\leqslant n$, $G-F$ has an almost perfect matching. Let $F\subseteq E(G)$ such that $|F|\leqslant n$. For $0\leqslant j\leqslant k_0-1$, the subgraph $G_0[j]$ is an odd order $(n-1)$-grid graph. Let $F_j=F\cap E(G_0[j])$.

First suppose $F_j=F$ for some $0\leqslant j\leqslant k_0-1$. We consider two possibilities for $j$. If $j\neq 0$, then $M_1\cup M_{1,0}$ is an almost perfect matching of $G-F$ where $M_1=E_{0}^{1,2}(G)\cup E_{0}^{3,4}(G)\cup \cdots\cup E_{0}^{k_{0}-2,k_{0}-1}(G)$ and $M_{1,0}$ is an almost perfect matching of $G_0[0]$. If $j\neq k_0-1$, then $M_2\cup M_{2,k_0-1}$ is an almost perfect matching of $G-F$ where $M_2=E_{0}^{0,1}(G)\cup E_{0}^{2,3}(G)\cup \cdots\cup E_{0}^{k_{0}-3,k_{0}-2}(G)$ and $M_{2,k_0-1}$ is an almost perfect matching of $G_0[k_0-1]$.

Now suppose $F_j\neq F$ for any $0\leqslant j\leqslant k_0-1$. Then $|F_j|\leqslant n-1$. The following are two cases. First let $|F_j|\leqslant n-2$ for any $0\leqslant j\leqslant k_0-1$. Since the size $|V_{allEven}(G_0[j])|=\frac{k_1+1}{2}\frac{k_2+1}{2}\cdots \frac{k_{n-1}+1}{2}\geqslant 2^{n-1} > n$, we can choose a vertex $u_j\in V_{allEven}(G_0[j])$ such that $P=u_0u_1\cdots u_{k_0-1}$ is a path of length $k_0-1$ such that $F$ uncovers $P$. So $|F_j| < n-1 = mp(G_0[j]-u_j)$ and $G_0[j]-F_j$ has an almost perfect matching $M[j]$ which uncovers $u_j$ by Lemma~\ref{lem mp(G-u) for u in VallEvenG odd order}. Let $M_P$ be an almost perfect matching of $P$. Then $M[1]\cup M[2]\cup\cdots \cup M[k_0-1]\cup M_P$ is an almost perfect matching of $G-F$. The other case is that $|F_j|=n-1$ for some $0\leqslant j\leqslant k_0-1$. Let $\{f\}=F\setminus F_j$. Then $f\in E_0(G)$ or $f\in F_k$ for some $k\neq j$ where $0\leqslant k\leqslant k_0-1$.

Let $f\in F_k$ for some $k\neq j$ where $0\leqslant k\leqslant k_0-1$. Without loss of generality, we may let $j<k$. If $j$ is odd, let $H_1$ and $H_2$ be the connected components of $G-(G_0[j-1]\cup G_0[j])$ where $H_1$ is of even order, $H_2$ is of odd order and $f\in E(H_2)$. Then $H_2$ is an odd order $m$-grid graph where $m\in \{n-1,n\}$. There exists an almost perfect matching $M_f$ of $H_2$ such that $f\not\in M_f$ by Lemma~\ref{lem odd order f not in apm Mf}. Then $M(H_1)\cup E_0^{j-1,j}(G)\cup M_f$ is an almost perfect matching of $G-F$ where $M(H_1)$ is a perfect matching of $H_1$. If $j$ is even, let $H_1'$ and $H_2'$ be the connected components of $G-(G_0[j]\cup G_0[j+1])$ where $H_1'$ is of even order, $H_2'$ is of odd order and $f\in E(G_0[j+1])\cup E(H_2')$. Then $H_2'$ is an odd order $m'$-grid graph where $m'\in \{n-1,n\}$. There exists an almost perfect matching $M_f'$ of $H_2'$ such that $f\not\in M_f'$ by Lemma~\ref{lem odd order f not in apm Mf}. Then $M(H_1')\cup E_0^{j,j+1}(G)\cup M_f'$ is an almost perfect matching of $G-F$ where $M(H_1')$ is a perfect matching of $H_1'$.

Let $f\in E_0(G)$. If $G_0[j]-F_j$ has an isolated vertex $x_j$, then all the edges in $F_j$ are incident with $x_j$, $d_{G_0[j]}(x_j)=n-1$ and $x_j\in V_\delta(G_0[j])\subseteq V_{allEven}(G_0[j])$. Let $x_k\in V_{allEven}(G_0[k])$ for $k\neq j$ such that $P'=x_0x_1\cdots x_{k_0-1}$ is a path of length $k_0-1$. Then for $0\leqslant h\leqslant k_0-1$, $G_0[h]-F_h$ has an almost perfect matching $M^h$ which uncovers $x_h$ by Lemma~\ref{lem Alleven apm}. There exists an almost perfect matching $M_P'$ of $P'$ such that $f\not\in E(P')$ by Lemma~\ref{lem odd order f not in apm Mf}. Thus $M^0\cup M^1\cup \cdots \cup M^{k_0-1}\cup M_P'$ is an almost perfect matching of $G-F$.

Suppose $G_0[j]-F_j$ has no isolated vertices. Let $M_1$, $M_{1,0}$, $M_2$ and $M_{2,k_0-1}$ be defined above. First let $j\neq 0$. If $f\not\in M_1$, then $M_1\cup M_{1,0}$ is an almost perfect matching of $G-F$. If $f\in M_1$, then there exists an $(f;4)$-cycle $C_1$ such that $C_1\cap F_j=\emptyset$. So $(M_1\Delta C_1)\cup M_{1,0}$ is an almost perfect matching of $G-F$. Now let $j\neq k_0-1$. If $f\not\in M_2$, then $M_2\cup M_{2,k_0-1}$ is an almost perfect matching of $G-F$. If $f\in M_2$, then there exists an $(f;4)$-cycle $C_2$ such that $C_2\cap F_j=\emptyset$. So $(M_2\Delta C_2)\cup M_{2,k_0-1}$ is an almost perfect matching of $G-F$.
\end{proof}


\begin{thebibliography}{99}\setlength{\itemsep}{0mm}\linespread{1}\selectfont
\bibitem{1} A.~R.~de~Almeida, F.~Protti, and L.~Markenzon, Matching preclusion number in Cartesian product of graphs and its application to interconnection networks, Ars Combin., 112:193-204, 2013.

\bibitem{2} R.~Bhaskar, E.~Cheng, M.~Liang, S.~Pandey, and K.~Wang, Matching preclusion and conditional matching preclusion problems for twisted cubes, Congr. Numer., 205:175-185, 2010.

\bibitem{3} R.~C.~Brigham, F.~Harary, E.~C.~Violin, and J.~Yellen, Perfect-matching preclusion, Congr. Numer., 174:185-192, 2005.

\bibitem{5} E.~Cheng, P.~Hu, R.~Jia, and L.~Lipt\'{a}k, Matching preclusion and conditional matching preclusion for bipartite interconnection networks \textrm{II}: Cayley graphs generated by transposition trees and hyper-stars, Networks, 59(4):357-364, 2012.

\bibitem{6} E.~Cheng, P.~Hu, R.~Jia, and L.~Lipt\'{a}k, Matching preclusion and conditional matching preclusion problems for bipartite interconnection networks \textrm{I}: suffcient conditions, Networks, 59(4):349-356, 2012.

\bibitem{7} E.~Cheng, R.~Jia, and D.~Lu, Matching preclusion and conditional matching preclusion for augmented cubes, J. Interconnection Networks, 11:35-60, 2011.

\bibitem{8} E.~Cheng, L.~Lesniak, M.~J.~Lipman, and L.~Lipt\'{a}k, Conditional matching preclusion sets, Inform. Sci., 179:1092-1101, 2009.

\bibitem{9} E.~Cheng, L.~Lesniak, M.~J.~Lipman, and L. Lipt\'{a}k, Matching preclusion for alternating group graphs and their generalizations, Internat. J. Found. Comput. Sci., 19:1413-1437, 2008.

\bibitem{10} E.~Cheng, M.~J.~Lipman, and L.~Lipt\'{a}k, Matching preclusion and conditional matching preclusion for regular interconnection networks, Discrete Appl. Math., 160(13-14):1936-1954, 2012.

\bibitem{12} E.~Cheng, and L.~Lipt\'{a}k, Conditional matching preclusion for Cayley graphs generated by transposition trees, Congr. Numer., 213:143-154, 2012.

\bibitem{14} E.~Cheng, and L.~Lipt\'{a}k, Matching preclusion and conditional matching preclusion problems for tori and related cartesian products, Discrete Appl. Math., 160:1699-1716, 2012.

\bibitem{15} E.~Cheng, and L.~Lipt\'{a}k, Matching preclusion for some interconnection networks, Networks, 50:173-180, 2007.

\bibitem{16} E.~Cheng, L.~Lipt\'{a}k, M.~J.~Lipman, and M.~Toeniskoetter, Conditional matching preclusion for the alternating group graphs and split-stars, Int. J. Comput. Math., 88(6):1120-1136, 2011.

\bibitem{17} E.~Cheng, L.~Lipt\'{a}k, N.~Prince, and K.~Stanton, Matching preclusion and conditional matching preclusion problems for the generalized Petersen graph $P(n,3)$, Congr. Numer., 210:61-72, 2011.

\bibitem{18} E.~Cheng, L.~Lipt\'{a}k, B.~Scholten, and J.~Voss, James, Matching preclusion and conditional matching preclusion for wrapped-around butterfly graphs, Congr. Numer., 204:45-64, 2010.

\bibitem{19} E.~Cheng,L.~Lipt\'{a}k, and D.~Sherman, Matching preclusion for the ($n$,$k$)-bubble-sort graphs, Int. J. Comput. Math., 87(11):2408-2418, 2010.

\bibitem{20} E.~Cheng, S.~Padmanabhan, and K.~Qiu, Matching preclusion and conditional matching preclusion for crossed cubes, Parallel Process. Lett., 22(2):1250005, 2012.

\bibitem{21} X.~Hu, and H.~Liu, The (conditional) matching preclusion for burnt pancake graphs, Discrete Appl. Math., 161(10-11):1481-1489, 2013.

\bibitem{LHZ}Q. Li, J. He, H. Zhang,  Matching preclusion for vertex-transitive networks, Discrete Appl. Math.  207: 90-98, 2016.

\bibitem{22} H.~L\"{u}, X.~Li, and H.~Zhang, Matching preclusion for balanced hypercubes, Theoret. Comput. Sci., 465:10-20, 2012.

\bibitem{24} S.~Wang, R.~Wang, S.~Lin, and J.~Li, Matching preclusion for $k$-ary $n$-cubes, Discrete Appl. Math., 158:2066-2070, 2010.

\end{thebibliography}
\end{document}